\documentclass[10pt,a4paper,abstract=true,sfdefaults=false]{scrartcl}

\usepackage[T1]{fontenc}
\usepackage[utf8]{inputenc}
\usepackage[english]{babel}
\usepackage[autostyle]{csquotes}
\usepackage{lmodern}

\usepackage{amsmath, amsfonts, amssymb, amsthm, mathtools}

\numberwithin{equation}{section}

\usepackage[a4paper, margin=1in]{geometry}
\usepackage{microtype}

\usepackage{xcolor}
\usepackage[colorlinks=true, linkcolor=blue, citecolor=blue, urlcolor=blue]{hyperref}
\usepackage{xurl}

\usepackage{enumitem}

\setkomafont{disposition}{\normalfont\bfseries}
\setkomafont{title}{\normalfont\Large\bfseries}
\setkomafont{subtitle}{\normalfont\large}
\setkomafont{author}{\normalfont}
\setkomafont{date}{\normalfont}
\setkomafont{publishers}{\normalfont\small}

\usepackage[backend=biber,style=alphabetic,sorting=ynt]{biblatex}
\appto\bibsetup{\sloppy\emergencystretch=1em}
\usepackage[automark,pagestyleset=standard,markcase=upper]{scrlayer-scrpage}
\usepackage{zref-clever} 
\zcsetup{cap}

\theoremstyle{definition}

\newtheorem{theorem}{Theorem}[section]

\newtheorem{definition}[theorem]{Definition}
\newtheorem{lemma}[theorem]{Lemma}
\newtheorem{remark}[theorem]{Remark}

\newtheorem{example}[theorem]{Example}
\newtheorem{proposition}[theorem]{Proposition}
\newtheorem{conjecture}[theorem]{Conjecture}

\makeatletter
\renewenvironment{proof}[1][Proof]{%
  \par\pushQED{\qed}%
  \normalfont
  \trivlist
  \item[\hskip\labelsep\bfseries #1\@addpunct{.}]%
}{%
  \popQED\endtrivlist\ignorespacesafterend
}
\renewcommand*\@pnumwidth{2em} 
\makeatother

\DeclareMathOperator*{\supp}{supp}

\DeclareMathOperator{\Ima}{Im}

\DeclareMathOperator{\divg}{div}

\DeclareMathOperator{\trace}{trace}
\DeclareMathOperator*{\rank}{rank}

\DeclareMathOperator{\Sym}{Sym}
\DeclareMathOperator{\sym}{sym}
\DeclareMathOperator{\spanset}{span}
\DeclareMathOperator{\intset}{int}

\DeclareMathOperator{\rot}{rot}
\DeclareMathOperator{\Lip}{Lip}

\newcommand{\restrict}[1]{\left.\right|_{#1}}

\newcommand{\D}{\mathbb D}
\newcommand{\R}{\mathbb R}
\newcommand{\C}{\mathbb C}

\newcommand{\N}{\mathbb N}

\newcommand{\Sp}{\mathbb S}
\newcommand{\dd}{\mathop{}\!\mathrm{d}}
\newcommand{\CAT}{\mathrm{CAT}}
\renewcommand{\Re}{\operatorname{Re}}
\renewcommand{\Im}{\operatorname{Im}}

\title{Nowhere Locally Metric-Minimising Saddle Surfaces}
\author{Stefan Christian Kohlmeier}
\date{\today}
\publishers{}

\begin{document}

\maketitle

\begin{abstract}
    We study the relation between the saddle property and local metric minimality for smooth surfaces in Euclidean spaces. Anton Petrunin and Stephan Stadler proved that every smooth strictly saddle surface in \(\R^3\) is locally metric-minimising and expressed the expectation that this phenomenon fails in \(\R^4\). We confirm this expectation and prove a stronger density result: Every smooth strictly saddle embedding of a closed disc into \(\R^4\) can be approximated arbitrarily closely in the \(C^\infty\)-topology by smooth strictly saddle embeddings that are nowhere locally metric-minimising. In particular, nowhere locally metric-minimising saddle embeddings exist in \(\R^d\) for every \(d \geq 4\). The main ingredient is the construction of non-trivial infinitesimal isometric deformations of saddle immersions in \(\R^4\) supported in arbitrarily small discs. We also provide an explicit twisted quadratic surface that is saddle in a neighbourhood of the origin but fails to be locally metric-minimising there, by deriving a necessary condition for metric minimality.
\end{abstract}

\section{Introduction}\label{sec:introduction}

For smooth surfaces in \(\R^3\), Anton Petrunin and Stephan Stadler proved that every strictly saddle surface is locally metric-minimising. Their proof reduces the problem to an energy-minimisation problem and leads to a semilinear hyperbolic system of partial differential equations; see \cite{petruninstadler2019metric}. They also considered the analogous problem in \(\R^4\), but were unable to establish the existence of a solution to the resulting system. In \cite[Sec. 10]{petruninstadler2019metric}, they therefore expressed the expectation that there exist smooth saddle surfaces in \(\R^4\) that are not locally metric-minimising. In this paper, we confirm this expectation and prove an even stronger result, as stated below.

The notion of a metric-minimising surface was introduced by Petrunin in \cite{petrunin1999metric} and subsequently developed by Petrunin and Stadler in \cite{petruninstadler2019metric}. One motivation for studying these surfaces is Samuel Shefel's conjecture.

\begin{conjecture}[Shefel, {\cite{shefel1964intrinsic,petruninstadler2019metric}}]
    Every saddle disc in \(\R^3\) equipped with its intrinsic length metric is a \(\CAT(0)\)-space.
\end{conjecture}

Petrunin and Stadler proved in \cite{petruninstadler2019metric} that every metric-minimising disc in a \(\CAT(0)\)-space is intrinsically \(\CAT(0)\). Together with their result for smooth strictly saddle surfaces in \(\R^3\), this establishes Shefel's conjecture for this class of saddle surfaces. More recently, Alexander Lytchak and Sophia Wagner established the corresponding result for upper curvature bounds in \(\CAT(\kappa)\)-spaces; see \cite{lytchakwagner2024curvature}. The class of metric-minimising discs includes several well-studied classes of maps, such as ruled discs considered by Alexander Alexandrov; see \cite{alexandrov1957ruled,naganoshioyayamaguchi2024two}, harmonic maps from discs into \(\CAT(0)\)-spaces in the sense of Nicholas Korevaar and Richard Schoen; see \cite{korevaarschoen1993sobolev}, and minimal discs in the sense of Alexander Lytchak and Stefan Wenger; see \cite{lytchakwenger2016area}.

The paper is organised as follows. In \zcref{sec:saddle_and_metric-minimising_surfaces}, we recall the notions of saddle and metric-minimising surfaces, and study the quadratic model
\begin{equation}\label{introduction:eq:quadratic_model}
    s: \C  \to \C^2, \ s(z) = (z,z^2)
\end{equation}
and the twisted quadratic model
\begin{equation}\label{introduction:eq:twisted_quadratic_model}
    s: \C \to \C^2, \ s(z) = (z,z^2 e^{i|z|^2}).
\end{equation}

In \zcref{sec:infinitesimal_deformations}, we study infinitesimal deformations of smooth immersions. There are two complementary mechanisms in our approach.

Let \(s: \Omega \to \R^d\) be a smooth immersion, and consider the fibrewise defined bundle map
\begin{equation*}
    \mathcal{B}_p : (N_s\Omega)_p \to \Sym^2(T_p^*\Omega), \ \mathcal{B}_p(\nu) := II^\nu_p.
\end{equation*}
Suppose that \(\rank \mathcal{B}\) = 2 on an open subset \(U \subset \Omega\). The annihilators of the image of \(\mathcal{B}_p\) then form a smooth line bundle \(\mathcal{A}\) on \(U\) with fibres
\begin{equation*}
    \mathcal{A}_p := \{A \in \Sym^2(T_p\Omega) : \langle A,H\rangle_c := \trace(A^\sharp\circ H^\flat) = 0 \ \text{for every} \ H \in \Ima \mathcal{B}_p\}.
\end{equation*}
After possibly shrinking \(U\), choose a smooth nowhere vanishing section \(A \in \Gamma(\mathcal{A})\). Suppose that \(A\) is non-degenerate, set
\begin{equation*}
    \alpha_A := -(A^\sharp)^{-1} \divg A,
\end{equation*}
and define the \emph{twist form} \(\Theta \in \Omega^2(U)\) by \(\Theta := d\alpha_A\).

\begin{theorem}\label{introduction:thm:criterion_locally_metric-minimising}
    If the map \(s\) is locally metric-minimising at \(p \in U\), then \(\Theta_p = 0\).
\end{theorem}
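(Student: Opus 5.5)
The plan is to prove the contrapositive. Assuming \(\Theta_p \neq 0\), we construct, on every sufficiently small disc \(D\) around \(p\), a competitor \(s_t\) that agrees with \(s\) near \(\partial D\) and satisfies \(s_t^*\langle\cdot,\cdot\rangle \le s^*\langle\cdot,\cdot\rangle\) pointwise as quadratic forms, with strict inequality somewhere. Since metric-minimality in the sense of Petrunin–Stadler compares lengths of curves, a pointwise metric inequality makes \(s_t\) shorter than \(s\) without being an isometric copy of it. This contradicts local metric-minimality at \(p\).

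\textbf{Step 1: linearisation.} Write an infinitesimal deformation as \(v = ds(w) + \nu\), with \(w\) a tangent vector field and \(\nu\) a normal field, both compactly supported in \(D\). The first variation of the induced metric is
\[
\delta g = \sym \nabla w^\flat - II^\nu ,
\]
up to normalising constants. The term \(II^\nu = \mathcal{B}(\nu)\) ranges pointwise over the rank-2 subbundle \(\Ima \mathcal{B}\) of the rank-3 bundle \(\Sym^2(T^*\Omega)\), and the normal field \(\nu\) can be chosen freely. Hence prescribing \(\delta g = -h\) reduces to a single scalar condition, namely the pairing with the annihilator \(A\):
\[
\langle A, \sym\nabla w^\flat\rangle_c = -\langle A, h\rangle_c .
\]
Once \(w\) solves this, \(\nu\) is determined pointwise by linear algebra, using that \(\mathcal{B}\) is injective of rank 2.

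\textbf{Step 2: solving the scalar equation.} Using \(\langle A,\nabla w^\flat\rangle = \divg(A^\sharp w^\flat) - (\divg A)(w)\) and substituting \(u := A^\sharp w^\flat\) (possible because \(A\) is non-degenerate), the equation becomes
\[
\divg u + \alpha_A(u) = -f, \qquad f := \langle A, h\rangle_c .
\]
If \(\alpha_A = d\varphi\) were exact near \(p\), this would read \(\divg(e^\varphi u) = -e^\varphi f\). Integrating over \(D\) then forces \(\int e^\varphi f = 0\), so there would be no nontrivial shortening with \(f \ge 0\); this is the expected reason \(\Theta\) appears.

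Conversely, if \(\Theta_p \neq 0\), we work in coordinates in which \(\alpha_A\) equals a constant form plus a linear part with \(d\alpha_A \neq 0\). We then seek a compactly supported \(u\) of rotational type, \(u = J\nabla\psi\) with a bump function \(\psi\) plus a small correction. The aim is that \(\divg u + \alpha_A(u)\) has a definite sign: for a pure rotational field \(\divg u = 0\), and \(\alpha_A(J\nabla\psi)\) integrates against the weight to \(\pm\int \psi\, \Theta\). Adjusting the sign of \(\psi\) and adding a gradient correction, we aim for \(f \ge 0\) with \(f > 0\) on the interior of the support.

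We must also check that \(f \ge 0\) is realisable by some \(h \ge 0\). This is automatic if \(A\) is definite. If \(A\) is indefinite, \(\langle A, h\rangle_c\) takes both signs on positive semidefinite \(h\), and the sign condition is easier to meet.

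\textbf{Step 3: integrating to an actual deformation (main obstacle).} Set \(s_t := s + t v + t^2 v_2\). Then
\[
s_t^*\langle\cdot,\cdot\rangle = g - 2t h + O(t^2),
\]
and the difficulty is where \(h\) degenerates, in particular near \(\partial(\supp v)\), where the \(O(t^2)\) term may dominate. My first attempt is to choose \(h\) comparable to a fixed positive definite form times a cutoff \(\chi\) on its support. I would then absorb the second-order term by a correction \(v_2\) obtained from Step 1 applied again, with right-hand side the \(t^2\)-term minus a multiple of \(\chi h\), and use that the linear solution operator of Step 2 preserves supports and gains a factor of \(\chi\). If this bookkeeping fails near the boundary of the support, the fallback is an Nash-type iteration, or a pointwise scaling of \(t\) by a cutoff, arranging that the quadratic error is \(O(\chi^2) = o(\chi)\). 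With that, \(s_t\) is strictly shorter on an open set and equals \(s\) near \(\partial D\), which completes the contradiction.
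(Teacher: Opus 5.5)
Your overall architecture is the paper's. You argue by contraposition and split $Y = Ds(V)+\nu$ so that $\mathcal{E}(Y) = \sym(\nabla V^\flat) - \mathcal{B}(\nu)$. You reduce prescribing $\mathcal{E}(Y)$ to the single scalar equation for $L_A$, rewrite it as $\divg u + \alpha_A(u) = F$ with $u = A^\sharp\omega$, and finish with a compactly supported shortening competitor. But Step 2 never actually solves this equation. The identity $\int \alpha_A(J\nabla\psi) = \pm\int\psi\,\Theta$ only controls the sign on average, and the ``gradient correction'' meant to make the right-hand side pointwise non-negative is never constructed.

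The missing observation is that $\Theta_p\neq 0$ makes the equation exactly and locally solvable for \emph{every} compactly supported right-hand side. Write $\Theta = \lambda\, dVol_g$ with $\lambda\neq 0$ near $p$, put $\psi := F/\lambda$, and define $u$ by $*u^\flat := d\psi + \psi\,\alpha_A$. Then $d(*u^\flat) + \alpha_A\wedge *u^\flat = \psi\, d\alpha_A = F\, dVol_g$, i.e.\ $\divg u + \alpha_A(u) = F$. This is your rotational field $J\nabla\psi$, but the correction is $\psi\alpha_A$ at the level of $*u^\flat$, not a gradient. Composing with $(A^\sharp)^{-1}$, $\sym\nabla$ and the inverse of $\mathcal{B}$ on $(\ker\mathcal{B})^\perp$ gives a right inverse $R$ of $\mathcal{E}$. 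It is a linear differential operator of order at most $2$ and does not enlarge supports. You may therefore simply prescribe $h = \rho\, g_s$, and your concerns about the sign of $f$ and its realisability by $h\geq 0$ for definite or indefinite $A$ disappear.

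Step 3, which you flag as the main obstacle, is also left open, and the proposed fixes do not work as stated:
\begin{itemize}
\item A differential solution operator does not ``gain a factor of $\chi$''. Applied to $\chi h$ it produces derivatives of $\chi$, which are not $O(\chi)$ near $\partial\{\chi>0\}$.
\item Scaling $t$ by a cutoff spoils $\mathcal{E}(Y) = -h$ through $d\chi$-terms that are linear in $t$ and unsigned.
\item A corrector $t^2v_2$ just moves the same boundary problem to order $t^3$.
\end{itemize}
The paper needs no corrector. It chooses $\rho\ge 0$ compactly supported in the disc, positive on a smaller disc, with $|D^j\rho|^2\le C_j\rho$ for the finitely many $j$ that occur (e.g.\ $\rho = \phi^{2m}$ with $m$ large, or an $e^{-1/(\cdot)}$-type bump). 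For $Y = R(-\rho g_s)$, $DY$ is a combination of $D^j\rho$ with $j\le 3$ and coefficients bounded on the disc, so $|DY(X)|^2\le C\rho\, g_s(X,X)$. Since $g_{s+tY} = g_s + 2t\mathcal{E}(Y) + t^2 (DY)^\top DY$ holds exactly,
\[
g_{s+tY} - g_s \le (-2t + Ct^2)\,\rho\, g_s \le 0 \quad \text{for small } t>0,
\]
strictly where $\rho>0$. This estimate relies on the right inverse being local and of finite order, which is exactly what the explicit formula above provides.
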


This criterion partially answers Question 9.1 from \cite{lytchakwagner2024curvature}. Moreover, it applies to the twisted quadratic model \eqref{introduction:eq:twisted_quadratic_model} in a small disc around the origin. On the other hand, the twisted quadratic model is strictly saddle near the origin. We thereby obtain an explicit example of the phenomenon expected by Petrunin and Stadler.

The second mechanism rests on the following proposition, which is also of independent interest.

\begin{proposition}\label{introduction:prop:existence_infinitesimal_isometric_deformations}
    Let \(s:\Omega \to \R^d\), \(d \geq 4\) be a smooth saddle immersion, and let \(U \Subset \Omega\) be a non-empty open set. Then there exists a non-trivial infinitesimal isometric deformation \(Y \in C_c^\infty(U, \R^d)\) of \(s\). Moreover, its support may be chosen inside an arbitrarily small disc contained in \(U\).
\end{proposition}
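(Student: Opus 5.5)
The plan is to reduce the problem to a single scalar first-order PDE for a tangential vector field, and then produce compactly supported solutions of that PDE by a "potential" ansatz.

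\textbf{Splitting the equation.} Write the deformation as \(Y = ds(X) + N\), with \(X\) a vector field on \(\Omega\) and \(N\) a section of the normal bundle. The linearised metric condition \(\sym\langle ds, dY\rangle = 0\) then becomes
\[
  \tfrac12 \mathcal{L}_X g = \mathcal{B}(N) = II^N .
\]
If \(X\) and \(N\) are compactly supported in a small disc \(D \Subset U\) and \(Y \neq 0\), then \(Y\) is non-trivial. Indeed, a trivial deformation \(Kx+b\), with \(K\) skew, vanishes only on an affine subspace. A saddle immersion is not contained in an affine \(2\)-plane on any open set, so \(Y\) would vanish on an open set of the surface only if it vanished identically. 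I will check this small point separately.

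\textbf{Reduction by the rank of \(\mathcal{B}\).} Since the rank of \(\mathcal{B}\) is lower semicontinuous, after shrinking \(D\) we may assume it is constant on \(D\). There are three cases.
\begin{itemize}
  \item If \(\rank\mathcal{B}=3\), then \(\mathcal{B}\) is onto \(\Sym^2(T^*\Omega)\). Any \(X \in C_c^\infty(D)\) works: choose \(N\) as a smooth right inverse of \(\mathcal{B}\) applied to \(\tfrac12\mathcal{L}_X g\). This also covers \(d>4\) whenever the rank is maximal.
  \item If \(\rank\mathcal{B}=2\), we get the line bundle \(\mathcal{A}\) with a section \(A\). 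The only condition is the scalar equation
  \[
    P(X) := \langle A, \mathcal{L}_X g\rangle_c = 0 ,
  \]
  after which \(N\) is recovered smoothly as before.
  \item If \(\rank\mathcal{B}=1\), the image is spanned by one indefinite form \(H\). Then \(\mathcal{L}_X g = fH\) imposes two conditions, and I would treat this case by the same potential method with a two-dimensional annihilator. Alternatively, I would perturb the choice of \(D\) so that this case does not occur.
\end{itemize}
In the rank-\(2\) case, the saddle property says that \(\Ima\mathcal{B}_p\) contains no definite form. I expect this to force \(A\) to be non-degenerate after shrinking \(D\); if \(A\) degenerates on an open set, the equation only gets more flexible.

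\textbf{Structure of \(P\).} Using the Levi-Civita connection,
\[
  \langle A,\mathcal{L}_X g\rangle_c = 2\langle A, \nabla X^\flat\rangle = 2\bigl(\divg(A X^\flat) - \langle\divg A, X\rangle\bigr).
\]
So, in terms of the matrix \(B = A^\flat\), the operator has the form \(P(X) = \divg(BX) + c\cdot X\). This is one first-order equation in two unknowns, that is, an underdetermined operator with surjective principal symbol.

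\textbf{Main obstacle: compactly supported kernel elements.} The key step is to find a nonzero \(X \in C_c^\infty(D)\) with \(P(X)=0\). My approach is to find a differential operator \(M\) with \(P\circ M = 0\); then \(X = M(h)\) for \(h \in C_c^\infty(D)\) gives compactly supported solutions. First try the first-order ansatz
\[
  M(h) = B^{-1}J\nabla h + hV ,
\]
where \(J\) is the rotation by \(\pi/2\).
\begin{itemize}
  \item The principal part of \(P\circ M\) is \(\divg(J\nabla h)=0\).
  \item The first-order part is \(\nabla h\cdot\bigl(BV + (B^{-1}J)^{T}c\bigr)\), which vanishes if we set \(BV = -(B^{-1}J)^T c\).
\end{itemize}
This leaves \(P(M(h)) = \kappa h\) with \(\kappa = \divg(BV) + c\cdot V\). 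I expect \(\kappa\) to be essentially the twist form \(\Theta\) of \zcref{introduction:thm:criterion_locally_metric-minimising}. Where \(\kappa\equiv 0\), we are done.

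Where \(\kappa \neq 0\), I would pass to a second-order ansatz
\[
  X = M_1(h) + M_2(h) ,
\]
with \(M_2\) of order two, and solve the resulting triangular system of algebraic conditions on the coefficients so that \(P\circ M\) vanishes order by order. Since \(P\) is underdetermined with surjective symbol, Gromov's infinitesimal invertibility also guarantees that some such \(M\) exists near a generic point. It remains to check that \(M(h)\not\equiv 0\) for suitable \(h\); this holds because the leading coefficient of \(M\) is invertible. If no explicit \(M\) comes out cleanly, the fallback is to invoke this existence result from Gromov's theory of underdetermined linear operators.
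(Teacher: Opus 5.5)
\textbf{Gap in the case \(\rank\mathcal B\le 1\).} You neither avoid nor handle this case, and rank \(0\) is missing altogether. It cannot be perturbed away: \(s(x,y)=(x,y,xy,0)\) is saddle with \(\rank\mathcal B\equiv1\), and the plane has \(\mathcal B\equiv0\). Nor can a potential method produce compactly supported \(X\ne0\) there in general, because \(\tfrac12\mathcal L_Xg\in\spanset\{H\}\) is a determined system. For \(s=(x,y,xy,0)\), the first three components of \(Y=Ds(X)+N\) form an infinitesimal bending \((p,q,w)\) of the hyperbolic paraboloid \(z=xy\). The bending equations give \(w_{xy}=0\), so compact support forces \(w=0\), then \(p=q=0\), and hence \(X=0\). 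In rank \(0\) you face the Killing equation of a flat metric, which likewise has no compactly supported solutions. Your rank-one argument never uses \(d\ge4\), yet for \(d=3\) the proposition is false, with this very paraboloid as a counterexample.

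The missing idea is elementary. If \(r=\rank\mathcal B\le1\) on a small disc, then \(\ker\mathcal B\) is a subbundle of the normal bundle of rank \(d-2-r\ge1\), because \(d\ge4\). Take a nowhere-vanishing section \(\nu\) of it and \(0\ne\varphi\in C_c^\infty\). Then the purely normal field \(Y=\varphi\nu\) satisfies \(\mathcal E(Y)=-\varphi\mathcal B(\nu)=0\). This is the paper's Case 1, and it is precisely where \(d\ge4\) enters. Your rank-\(3\) case is vacuous: \(\Ima\mathcal B_p=\Sym^2(T_p^*\Omega)\) would contain definite forms, contradicting \zcref{prop:saddle_second_fundamental_form}.

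\textbf{Two steps missing in the rank-\(2\) case.} First, the saddle property makes \(A\) semidefinite (an indefinite \(A\) puts a definite form into \(\Ima\mathcal B_p\)), but not non-degenerate. For example, take \(s=(x,y,xy,x^2y)\) near points with \(y\ne0\); it is saddle since every height function \(\langle a,s\rangle\) is affine in \(y\), and there \(A\propto\partial_y\otimes\partial_y\). Your ansatz uses \(B^{-1}\), so this case needs a separate argument. Take an orthonormal frame \((e_1,e_2)\) with \(A=\lambda\, e_1\otimes e_1\) and \(\nabla_{e_1}e_1=\mu e_2\), and write \(\omega=f e_1^\flat+h e_2^\flat\). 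Then \(L_A\omega=0\) reduces to \(e_1(f)=\mu h\), which you can solve by hand with compact support, as the paper does.

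Second, for \(\kappa\ne0\) the second-order ansatz is never carried out. Gromov's infinitesimal invertibility provides a right inverse, not an \(M\) with \(P\circ M=0\). You already hold the fix:
\begin{enumerate}[label=(\roman*)]
  \item Your identity \(P\circ M_1=\kappa\) shows that \(R:=M_1\circ\kappa^{-1}\) is a first-order right inverse wherever \(\kappa\ne0\).
  \item Then \(M:=\mathrm{Id}-R\circ P\) satisfies \(P\circ M=0\) and preserves supports.
  \item \(M\) is a non-zero operator, since the principal symbol of \(R\circ P\) does not vanish. Hence \(M(X_0)\ne0\) for some compactly supported \(X_0\).
\end{enumerate}
Equivalently, cut off a nowhere-vanishing local solution and correct it with \(R\), which is what the paper does. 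With these repairs, your \(\kappa\equiv0\)/\(\kappa\ne0\) split is exactly the paper's \(d\alpha_A\equiv0\)/\(d\alpha_A\ne0\) split, with \(\kappa\) the twist form.

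Finally, drop the claim that saddle immersions are nowhere planar. It is false (the plane is saddle), and it is unnecessary, since non-trivial here only means \(Y\ne0\).
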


If \(Y\) is such a deformation and \(s_t = s + tY\), then the first-order variation of the induced metric vanishes and
\begin{equation*}
    g_{s_t} - g_s = t^2 (DY)^\top DY.
\end{equation*}
Together with suitable rank properties established in \zcref{sec:infinitesimal_deformations}, this allows us to perturb a strictly saddle embedding successively on arbitrarily small discs.

Finally, in \zcref{sec:the_density_theorem}, we iterate this construction over a countable basis of the topology of the domain, and prove the following density theorem.

\begin{theorem}\label{introduction:thm:density_theorem}
    Let \(\Omega \subset \C\) be an open disc and let \(s_0 : \overline{\Omega} \to \C^2\) be a smooth, strictly saddle embedding on \(\overline{\Omega}\). Then \(s_0\) can be approximated arbitrarily closely in the \(C^\infty\)-topology by smooth, strictly saddle embeddings that are nowhere locally metric-minimising in \(\Omega\).
\end{theorem}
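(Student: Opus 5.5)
We prove the density theorem by an inductive perturbation scheme over a countable basis \(\{B_k\}_{k\ge1}\) of the topology of \(\Omega\) consisting of small discs \(B_k \Subset \Omega\). Fix a target neighbourhood of \(s_0\) in the \(C^\infty\)-topology on \(\overline{\Omega}\), say defined by \(\|s-s_0\|_{C^m(\overline\Omega)}<\varepsilon\) for a given \(m\) (for the full Fréchet topology, choose \(\varepsilon_k\) summable in every \(C^k\)-norm). We will construct a sequence \(s_k\) with \(s_k = s_{k-1}\) outside a small disc \(D_k \subset B_k\), \(\|s_k-s_{k-1}\|_{C^k}<\varepsilon 2^{-k}\), such that \(s_k\) is a strictly saddle embedding, and the twist form \(\Theta^{(k)}\) of \(s_k\) from \zcref{introduction:thm:criterion_locally_metric-minimising} is nonzero at some point \(q_k \in D_k\). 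Since \(\Theta\) depends on finitely many derivatives of \(s\), non-vanishing of \(\Theta_{q_k}\) is an open condition in a \(C^r\)-topology. This is why we also shrink the later tolerances: we require \(\varepsilon_{j}\) for \(j>k\) so small that every later modification preserves \(\Theta_{q_j}\neq 0\) for all \(j\le k\), as well as the rank-2 and non-degeneracy hypotheses near \(q_j\), the strict saddle property on \(\overline\Omega\) (open in \(C^2\) by compactness), and the embedding property (open in \(C^1\) on the compact \(\overline\Omega\)). The limit \(s_\infty = \lim s_k\) exists in \(C^\infty(\overline\Omega)\), is a strictly saddle embedding close to \(s_0\), and satisfies \(\Theta_{q_k}\neq0\) for every \(k\). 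Every open set contains some \(B_k\), hence some \(q_k\), and by \zcref{introduction:thm:criterion_locally_metric-minimising} the map is not locally metric-minimising at \(q_k\). Here we interpret nowhere locally metric-minimising as: no nonempty open set on which it is locally metric-minimising at every point. If the definition is pointwise at every point, we instead arrange \(\Theta\) nonzero on a dense open set and also need a criterion at points of an exceptional null set; I would handle this by noting local minimality at \(p\) implies it on a neighbourhood if the paper's definition is neighbourhood-based.

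The inductive step is the heart. Given \(s_{k-1}\), first use \zcref{introduction:prop:existence_infinitesimal_isometric_deformations} to obtain a nontrivial infinitesimal isometric deformation \(Y\) supported in a small disc \(D_k \subset B_k\). Before that, we may need to generically perturb \(s_{k-1}\) inside \(B_k\) so that \(\rank\mathcal B = 2\) and \(A\) is non-degenerate on an open subset. In \(\R^4\), strict saddle already forces \(\rank\mathcal B=2\) generically, since the two normal shape operators span a 2-dimensional space of indefinite forms; this should be among the rank properties of \zcref{sec:infinitesimal_deformations}. Then set \(s_t=s_{k-1}+tY\). We have \(g_{s_t}-g_{s_{k-1}}=t^2(DY)^\top DY\), and \(\Theta(s_t)\) is a smooth function of \(t\). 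If \(\Theta(s_{k-1})\) is already nonzero somewhere in \(D_k\), take \(t=0\). Otherwise \(\Theta(s_{k-1})\equiv0\) on \(D_k\), and we need some small \(t\) with \(\Theta(s_t)\not\equiv0\) on \(D_k\).

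The main obstacle is exactly this non-vanishing: showing that the first (or some) \(t\)-derivative of \(\Theta(s_t)\) on \(D_k\) is nonzero for a suitable choice of \(Y\). First I would compute \(\frac{d}{dt}\Theta\) at \(t=0\) in terms of the variation of the second fundamental form induced by \(Y\) while the metric is fixed to first order. The image of \(\mathcal B\) rotates under this variation, so the annihilator line \(A\) changes, and \(\dot\Theta = d\dot\alpha_A\). The freedom in \zcref{introduction:prop:existence_infinitesimal_isometric_deformations} (arbitrary support, and linearity: any compactly supported solution can be scaled or cut off by solving locally) should supply enough \(Y\) to make \(d\dot\alpha_A\neq0\). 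If the first-order term vanishes identically for structural reasons, I would fall back to the second-order term \(t^2\), using that \(s_t\) is then a genuinely non-isometric perturbation. Alternatively, I would argue by analyticity in \(t\): if \(\Theta(s_t)\equiv0\) for all small \(t\) and all admissible \(Y\), this would contradict the explicit twisted quadratic example, which is a local model approximating any strictly saddle surface after rescaling. Finally, choose \(t\) small enough to meet all the accumulated tolerances.
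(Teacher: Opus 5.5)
\textbf{The inductive step is left open.} Several parts of your proposal are sound: the induction over a countable basis, the later tolerances chosen small in \(C^4\) so that earlier conditions \(\Theta_{q_j}\neq0\) survive, and the final step. Once \(\Theta\neq0\) on a dense open set, every disc centred at any point contains a small disc on which \zcref{lem:right_inverse_operator} and the construction of \zcref{prop:metric_operator_right_inverse_locally_metric-minimising} give a shorter competitor, so no heredity argument is needed. Your preliminary generic perturbation is also unnecessary: for a strictly saddle immersion into \(\R^4\), \(\mathcal{B}_p\) is injective, so \(\rank\mathcal{B}=2\) everywhere and \(A\) is automatically definite.

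The gap is the inductive step, which carries the whole content of the theorem. You never show that an arbitrarily small perturbation supported in \(B_k\) makes \(\Theta\) non-zero somewhere, and the mechanisms you suggest do not deliver it:
\begin{enumerate}[label=(\roman*)]
\item Varying along an infinitesimal isometric deformation \(Y\) is backed by no computation of \(\frac{d}{dt}\Theta(s+tY)\). Nothing explains why the first or second variation should be non-zero, and the existence proposition gives you some \(Y\) but no control over how it moves \(\Ima\mathcal{B}\).
\item The rescaling argument runs the wrong way. Blowing up a strictly saddle surface at a point gives, after a linear normalisation, the quadratic model \((z,z^2)\), where \(A\) is constant and \(\Theta\equiv0\). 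The twist is a fourth-order effect that scales away.
\item The twisted model is not of the form \(s+tY\) with \(Y\) infinitesimally isometric. Its metric differs from that of \((z,z^2)\) by the indefinite form \(4r^6dr^2+8r^5\,dr\,d\theta\), whereas \(g_{s+tY}-g_s=t^2(DY)^\top DY\geq0\). So it cannot contradict \(\Theta(s+tY)\equiv0\).
\end{enumerate}

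\textbf{One way to close the gap on your route.} By \zcref{lem:twisting_form_graph_coordinates}, \(\Theta_p\) depends on the \(4\)-jet of \(s\) at \(p\). It is affine in the fourth derivatives, with linear part depending only on the \(2\)-jet. The models \((z,z^2)\) and \((z,z^2e^{i|z|^2})\) have the same \(3\)-jet at \(0\), but their twist forms at \(0\) are \(0\) and \(24\,dx\wedge dy\). Hence that linear part is non-zero at the quadratic \(2\)-jet. The graph formula is natural under linear changes of the domain and of the normal coordinates, and these bring every strictly saddle \(2\)-jet to the quadratic one. So adding \(\varepsilon\chi\) times the transported quartic \((0,iz^2|z|^2)\) makes \(\Theta_p\neq0\) for all \(\varepsilon\) except at most one value. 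With this lemma your scheme would be a genuinely different proof, essentially a genericity statement.

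\textbf{How the paper argues instead.} The paper does not use \(\Theta\) in the density theorem at all. It adds \(t_nY_n\), where \(Y_n\) is an infinitesimal isometric deformation of \(s_{n-1}\) supported in a disc \(V_n\). For each \(j<n\), \(V_n\) lies either in \(\intset\{DY_j=0\}\) or in \(\{\rank DY_j=2\}\), which is possible by \zcref{lem:rank_0-1_dense}. The competitor is \(q_j=s_\infty-t_jY_j\). Vanishing first variation gives \(g_{s_\infty}-g_{q_j}=2t_j\langle DB_j,DY_j\rangle+t_j^2|DY_j|^2\). The choice \(t_nM_{j,n}<2^{-n-2}t_j\) makes this at least \(\tfrac12t_j^2|DY_j|^2\), which is strictly positive where \(DY_j\) has rank two. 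So the deformation itself supplies the competitor.
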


\paragraph{Acknowledgements.} I thank Anton Petrunin and Stephan Stadler for proposing this problem and for many helpful discussions and comments on this work. I am particularly grateful to Anton Petrunin for suggesting the twisted quadratic model \eqref{introduction:eq:twisted_quadratic_model} as well as for the idea of introducing an intrinsic curvature-type quantity. I also acknowledge OpenAI's ChatGPT (version 5.5) for suggesting the use of infinitesimal isometric deformations. After completing the manuscript, ChatGPT (version 5.6) was utilised for proofreading. I am fully responsible for the final content.

\section{Saddle and Metric-Minimising Surfaces}\label{sec:saddle_and_metric-minimising_surfaces}

We recall the notions of saddle and metric-minimising surfaces following Petrunin and Stadler \cite{petruninstadler2019metric}. Unless stated otherwise, we denote by \(\langle\cdot,\cdot\rangle\) the Euclidean inner product in \(\R^d\) for some integer \(d \geq 2\). Throughout the paper, \(\mathbb D\) denotes a closed disc and \(D:=\operatorname{int}\mathbb D\) its interior. 

 The proofs in this section might be familiar to experts in the field, so they are included in \zcref{appendix:sec:proofs_saddle_and_metric_minimising_surfaces}.

\subsection{Saddle Surfaces}\label{subsec:saddle_surfaces}

\begin{definition}\label{def:saddle}
    Let \(\Omega \subset \R^2\) be an open subset. We say that a map \(s: \Omega \to \R^d\) is \emph{saddle at a point} \(p \in \Omega\) if there exists a disc \(\D\) centred at \(p\) such that, for any affine hyperplane \(\Pi \subset \R^d\), each connected component of \(\D \setminus s^{-1}(\Pi)\) meets the boundary \(\partial\D\). We call the map \(s\) \emph{saddle} in \(\Omega\) if it is saddle at every point \(p \in \Omega\).
\end{definition}

\begin{example}\label{ex:not_saddle_at_the_origin}
    The paraboloid \(s: \R^2 \to \R^3, \ s(x,y)=(x,y,x^2+y^2)\) is not saddle at the origin.
\end{example}

Smooth saddle surfaces can be studied through their second fundamental form. Let \(s:\Omega \to \R^d\) be a \(C^1\)-immersion. For \(p \in \Omega\), the \emph{first fundamental form} is given by
\begin{equation*}
    I_p:T_p\Omega \times T_p\Omega \to \R, \ I_p(v,w) := \langle Ds_p(v),Ds_p(w)\rangle.
\end{equation*}
This is exactly the pullback of the Euclidean metric under \(s\), i.e. the induced Riemannian metric on \(\Omega\). Now let us assume that \(s\) is a \(C^2\)-immersion, and set
\begin{equation*}
    T_s \Omega := Ds(T\Omega) \subset s^*(T\R^d) \quad \text{and} \quad N_s \Omega := (T_s\Omega)^\perp \subset s^*(T\R^d) 
\end{equation*}
be the induced tangent- and normal bundles. Thus \(s^*(T\R^d) = T_s \Omega \oplus N_s \Omega\). The \emph{second fundamental form} is given by
\begin{equation*}
    II_p:T_p\Omega \times T_p\Omega \to (N_s \Omega)_p, \ II_p(v,w) := (D^2s_p(v,w))^\perp.
\end{equation*}
For a vector \(\nu \in (N_s \Omega)_p\), we also consider the \emph{scalar second fundamental form}
\begin{equation*}
    II_p^\nu : T_p\Omega \times T_p\Omega \to \R, \ II_p^\nu(v,w) = \langle II_p(v,w),\nu\rangle = \langle D^2s_p(v,w),\nu\rangle.
\end{equation*}

\begin{definition}\label{def:strictly_saddle}
    Let \(\Omega \subset \R^2\) be an open subset. We call a \(C^2\)-immersion \(s:\Omega \to \R^d\) \emph{strictly saddle at a point \(p\)} if the scalar second fundamental form \(II_p^\nu\) is indefinite for all \(\nu \in (N_s \Omega)_p\setminus\{0\}\). We call the map \(s\) \emph{strictly saddle in \(\Omega\)} if it is strictly saddle at every point \(p \in \Omega\).
\end{definition}

The following proposition links the notions of saddle and strictly saddle surfaces introduced in \zcref{def:saddle,def:strictly_saddle}.

\begin{proposition}\label{prop:saddle_second_fundamental_form}
    Fix a point \(p \in \Omega\).
    \begin{enumerate}[label=(\alph*)]
        \item If \(s\) is strictly saddle at \(p\), then \(s\) is saddle at \(p\).
        \item If \(II_p^\nu\) is definite for some \(\nu\in (N_s \Omega)_p\setminus\{0\}\), then \(s\) is not saddle at \(p\).
    \end{enumerate}
    Consequently, if \(II_p^\nu\) is non-degenerate for every \(\nu\in (N_s \Omega)_p\setminus\{0\}\), then \(s\) is saddle at \(p\) if and only if it is strictly saddle at \(p\).
\end{proposition}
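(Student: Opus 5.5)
We will work in a local graph chart. After a rigid motion, assume \(s(p)=0\) and \(Ds_p(T_p\Omega)=\R^2\times\{0\}\). By the inverse function theorem, after a local reparametrisation (which does not affect either notion, since both depend only on the image near \(s(p)\) and on preimages of hyperplanes), write \(s(x)=(x,f(x))\) near \(0\) with \(f:\R^2\to\R^{d-2}\), \(f(0)=0\), \(Df(0)=0\). For a unit normal \(\nu\in\{0\}\times\R^{d-2}\) we then have \(II^\nu_0(v,w)=\langle D^2f_0(v,w),\nu\rangle\). An affine hyperplane \(\Pi=\{y:\langle y,n\rangle=c\}\) pulls back to the zero set of the function
\[
h(x)=\langle x,n^\top\rangle+\langle f(x),n^\perp\rangle-c,
\]
where \(n=(n^\top,n^\perp)\). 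Saddleness at \(p\) asks that no component of \(\{h\neq 0\}\cap\D\) is compactly contained in the interior of \(\D\).

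For (b), take \(\nu\) with \(II^\nu_0\) positive definite (replace \(\nu\) by \(-\nu\) if needed). For a disc \(\D\) of radius \(r\), consider \(\Pi_\varepsilon=\{\langle y,\nu\rangle=\varepsilon\}\), so \(h(x)=\langle f(x),\nu\rangle-\varepsilon\). By Taylor expansion, \(\langle f(x),\nu\rangle\ge \lambda|x|^2/2\) on a small disc, with \(\lambda>0\) the smallest eigenvalue. Hence for any \(r\) below that radius and \(\varepsilon<\lambda r^2/8\), the set \(\{h<0\}\) contains \(0\) and is contained in the disc of radius \(r/2\). So the component of \(\D\setminus s^{-1}(\Pi_\varepsilon)\) containing \(0\) misses \(\partial\D\). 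Since this works for every sufficiently small \(r\), and larger discs are handled because the component through \(0\) is still trapped in a small ball, \(s\) is not saddle at \(p\).

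For (a), argue by contradiction via a maximum principle. Suppose a component \(V\) of \(\D\setminus s^{-1}(\Pi)\) avoids \(\partial\D\). Then \(\overline V\) is compact in \(\intset\D\), \(h\) vanishes on \(\partial V\), and \(h\) has constant sign on \(V\), say \(h<0\). So \(h\) attains an interior minimum at some \(q\in V\), where \(Dh_q=0\) and \(D^2h_q\ge0\). The condition \(Dh_q=0\) says that \(n\) is normal to the surface at \(q\): indeed \(Dh_q=\langle Ds_q(\cdot),n\rangle\) in general coordinates. Then \(D^2h_q=II^n_q\), with \(n\neq0\), and this form is positive semidefinite. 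This contradicts strict saddleness at \(q\), provided we choose \(\D\) so small that \(s\) is strictly saddle on all of \(\D\).

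The main obstacle is this last step: strict saddleness at \(p\) must propagate to a neighbourhood. This follows from an openness argument. The set of \((q,\nu)\) with \(|\nu|=1\) and \(II^\nu_q\) indefinite, i.e. \(\det II^\nu_q<0\) in a frame, is open, and the unit normal sphere at \(p\) is compact. So a uniform neighbourhood exists via a continuous local orthonormal normal frame, which requires \(C^2\)-regularity. Note that we need indefiniteness rather than mere non-definiteness to rule out \(D^2h_q\ge0\); semidefinite degenerate forms are exactly what strictness excludes.

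The final equivalence is then immediate. If all \(II^\nu_p\) are non-degenerate, each is either definite or indefinite. Part (b) shows that saddleness forces indefiniteness for every \(\nu\neq0\), and part (a) gives the converse.
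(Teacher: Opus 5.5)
Your proof is correct and follows essentially the same route as the paper. For (a), you propagate strict saddleness from \(p\) to a small disc by compactness of the unit normal sphere, then apply the maximum principle to \(h=\langle n,s\rangle-c\), whose Hessian at an interior extremum is \(II^n_q\). For (b), you use the Taylor expansion of \(\langle \nu,s-s(p)\rangle\) to trap the component of a sublevel set around \(p\) inside a small ball.

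There are two harmless slips. First, the graph-chart reparametrisation is unnecessary, since both arguments work verbatim in the original coordinates, and dropping it avoids the round-disc invariance you only hand-wave. Second, Taylor's theorem only gives \(\langle f(x),\nu\rangle\ge\lambda|x|^2/4\) (not \(\lambda|x|^2/2\)) on a small disc, so you should take \(\varepsilon<\lambda r^2/16\).
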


\subsection{Metric-Minimising Surfaces}

We denote by \(\ell(\gamma)\) the length of a curve \(\gamma\).

\begin{definition}\label{def:metric_minimising}
    Let \(\D \subset \C\) be a disc and \(s: \D \to \R^d\) be a Lipschitz embedding. We say that \(s\) is \emph{metric-minimising on \(\D\)} if its intrinsic metric is minimal. Explicitly, this means that if a continuous map \(s':\D \to \R^d\) agrees with \(s\) on \(\partial\D\) and satisfies
    \begin{equation}\label{eq:def:metric_minimising:1}
        \ell(s\circ\gamma) \geq \ell(s'\circ\gamma) \quad \text{for all Lipschitz curves} \ \gamma:[0,1] \to \D,
    \end{equation}
    then equality holds in \zcref{eq:def:metric_minimising:1}.

    Let \(\Omega \subset \R^2\) be an open set and \(s:\Omega\to\R^d\) be a continuous map. We say that \(s\) is \emph{locally metric-minimising at \(p \in \Omega\)} if there exists a disc \(\D\) centred at \(p\) such that \(s\restrict{\D}\) is a metric-minimising Lipschitz embedding.
\end{definition}

\begin{lemma}\label{lem:metric_minimising_riemannian_metric}
    Let \(s,s':\Omega\to\R^d\) be two continuously differentiable maps, where \(\Omega \subset \R^2\) is an open set. Let us denote by \(g_s = s^*g_{\R^d}\) and \(g_{s'} = (s')^*g_{\R^d}\) the induced pullback metrics, respectively. Then the following statements are equivalent:
    \begin{enumerate}[label=(\alph*)]
        \item We have
        \begin{equation*}
            \ell(s\circ\gamma) \geq \ell(s'\circ\gamma) \quad \text{for all Lipschitz curves} \ \gamma:[0,1] \to \Omega.
        \end{equation*}
        \item \(g_s \geq g_{s'}\) pointwise as pullback quadratic forms.
    \end{enumerate}
\end{lemma}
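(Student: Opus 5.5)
We prove the two implications separately. The link between them is the length formula for \(C^1\) compositions: for a Lipschitz curve \(\gamma:[0,1]\to\Omega\) and a \(C^1\) map \(s\), the curve \(s\circ\gamma\) is Lipschitz, and
\begin{equation*}
    \ell(s\circ\gamma)=\int_0^1 |Ds_{\gamma(t)}(\gamma'(t))|\dd t=\int_0^1 \sqrt{(g_s)_{\gamma(t)}(\gamma'(t),\gamma'(t))}\dd t .
\end{equation*}
This holds because \(\gamma\) is differentiable almost everywhere with bounded derivative. At such points the chain rule gives \((s\circ\gamma)'(t)=Ds_{\gamma(t)}\gamma'(t)\). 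The length of an absolutely continuous curve in \(\R^d\) equals the integral of its speed.

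For (b)\(\Rightarrow\)(a), assume \(g_s\geq g_{s'}\) pointwise. Then the integrand for \(s\) dominates the integrand for \(s'\) at almost every \(t\). Integrating gives \(\ell(s\circ\gamma)\geq\ell(s'\circ\gamma)\).

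For (a)\(\Rightarrow\)(b), fix \(p\in\Omega\) and \(v\in T_p\Omega=\R^2\). Take the straight segment \(\gamma_\varepsilon(t)=p+t\varepsilon v\) for \(t\in[0,1]\), where \(\varepsilon>0\) is small enough that the segment lies in \(\Omega\); this is possible since \(\Omega\) is open. By (a) and the length formula,
\begin{equation*}
    \int_0^1 |Ds_{p+t\varepsilon v}(\varepsilon v)|\dd t\geq \int_0^1 |Ds'_{p+t\varepsilon v}(\varepsilon v)|\dd t .
\end{equation*}
Divide by \(\varepsilon\) and let \(\varepsilon\to0\). Since \(Ds\) and \(Ds'\) are continuous, both integrands converge uniformly in \(t\), to \(|Ds_p v|\) and \(|Ds'_p v|\) respectively. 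This yields \(|Ds_pv|\geq|Ds'_pv|\), that is, \((g_s)_p(v,v)\geq(g_{s'})_p(v,v)\). As \(p\) and \(v\) were arbitrary, \(g_s\geq g_{s'}\) as quadratic forms.

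The only point requiring some care is the length formula for Lipschitz reparametrisations, which is a standard fact about absolutely continuous curves. The rest is the elementary limiting argument above, which relies on the \(C^1\) hypothesis only through continuity of the derivative.
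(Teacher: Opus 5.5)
Your proof is correct and is essentially the paper's argument: (b)\(\Rightarrow\)(a) comes from the length formula \(\ell(s\circ\gamma)=\int_0^1|Ds_{\gamma(t)}(\gamma'(t))|\,dt\). For (a)\(\Rightarrow\)(b) you use short straight segments \(p+t\varepsilon v\), divide by \(\varepsilon\) and let \(\varepsilon\to 0\), as the paper does; you also spell out more carefully than the paper why the length formula holds for Lipschitz curves (a.e.\ chain rule and absolute continuity).
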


The following observation is stated in \cite{petruninstadler2019metric}.

\begin{proposition}\label{prop:locally_metric-minimising_saddle}
    Let \(s:\Omega \to \R^d\) be a locally metric-minimising map at \(p \in \Omega\). Then \(s\) is saddle at \(p\).
\end{proposition}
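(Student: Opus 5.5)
We argue by contraposition: assuming \(s\) is not saddle at \(p\), we show that for every disc \(\D\) centred at \(p\) on which \(s\) is a Lipschitz embedding, there is a competitor \(s'\) that shortens some curve. Since \(s\) is not saddle at \(p\), for every disc \(\D\) centred at \(p\) (in particular, for the disc \(\D\) given by local metric minimality) there is an affine hyperplane \(\Pi\) and a connected component \(V\) of \(\D \setminus s^{-1}(\Pi)\) with \(\overline{V} \cap \partial\D = \emptyset\). Write \(\Pi = \{x : \langle x - x_0, n\rangle = 0\}\) with \(|n|=1\). On \(V\) the function \(f = \langle s - x_0, n\rangle\) has constant sign, say \(f>0\). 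Moreover \(f = 0\) on \(\partial V\), because \(V\) is a component of an open set whose complement in \(\D\) is \(s^{-1}(\Pi)\) and \(\partial V\) avoids \(\partial\D\).

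Define the competitor by reflecting, or rather projecting, the part over \(V\) onto \(\Pi\). Set \(s' = s\) outside \(V\) and \(s' = s - f\,n = P_\Pi \circ s\) on \(V\), where \(P_\Pi\) is the orthogonal projection onto \(\Pi\). Continuity of \(s'\) follows from \(f=0\) on \(\partial V\), and \(s' = s\) on \(\partial \D\).

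Since \(P_\Pi\) is \(1\)-Lipschitz, every Lipschitz curve \(\gamma\) in \(\D\) satisfies \(\ell(s'\circ\gamma) \le \ell(s\circ\gamma)\). To verify this, split \(\gamma\) into the (countably many) open intervals where it lies in \(V\) and the closed complement. On the complement \(s' = s\), and the length of \(s'\circ\gamma\) restricted to each subinterval inside \(V\) is at most that of \(s\circ\gamma\). Alternatively, observe that \(s' = \Phi\circ s\) with \(\Phi\) the \(1\)-Lipschitz map that is the identity on one side of \(\Pi\) and the projection on the other. This works only if \(s(\D\setminus V)\) does not enter the positive side, which need not hold, so the curve-splitting argument (using that length is additive and lower semicontinuous for a continuous curve) is the robust route.

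It remains to produce a Lipschitz curve with strict inequality. Since \(s\) is an embedding, it is injective, and \(V\) is a non-empty open set on which \(f>0\). Choose a short segment \(\gamma\) inside \(V\) along which \(f\circ\gamma\) is non-constant; such a segment exists because otherwise \(f\) would be locally constant on the connected set \(V\), hence constant, contradicting \(f\to 0\) at \(\partial V\ne\emptyset\). Then \(|(s\circ\gamma)'|^2 = |(P_\Pi s\circ\gamma)'|^2 + ((f\circ\gamma)')^2\) holds almost everywhere, with the last term positive on a set of positive measure, so \(\ell(s'\circ\gamma) < \ell(s\circ\gamma)\). This contradicts metric minimality on \(\D\).

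The main obstacle is the careful length comparison for curves that enter and leave \(V\) infinitely often, since \(\partial V\) may be wild. The plan is to use the pointwise metric derivative: for almost every \(t\), the metric speed of \(s'\circ\gamma\) at \(t\) is at most that of \(s\circ\gamma\). This holds on the set where \(\gamma(t)\in V\) by the \(1\)-Lipschitz projection, and on the set where \(\gamma(t)\notin V\) because there both curves agree and at density points the derivatives coincide. Integrating these speeds gives the inequality without decomposing into intervals.
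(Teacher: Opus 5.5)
Your proposal is correct and is essentially the paper's proof: project the part of \(s\) over the offending component onto \(\Pi\), and compare speeds almost everywhere along an arbitrary Lipschitz curve (equal on the closed set where \(\gamma\notin V\), shortened by the \(1\)-Lipschitz projection on the open set where \(\gamma\in V\)); strict inequality then comes from a curve in \(V\) along which the height \(f\) is non-constant. The one step to make explicit is that \(s'\circ\gamma\) is Lipschitz, because integrating speeds gives the length only for absolutely continuous curves; the paper proves \(\Lip(\tilde s)\le\Lip(s)\) by inserting, on a segment, a point of \(\partial C\) where the height vanishes, and along your curve you can insert the first exit time from \(V\) in the same way (also, failure of the saddle property only gives \(V\cap\partial\D=\emptyset\), not \(\overline V\cap\partial\D=\emptyset\), but \(f=0\) on \(\partial V\) still holds).
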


\subsection{Motivation and Examples}\label{subsec:motivation_and_examples}

We now discuss two examples that illustrate the relation between the saddle property and local metric minimality. 

\subsubsection{The Quadratic Model}\label{subsubsec:the_quadratic_model}

We consider the map 
\begin{equation}\label{eq:subsubsec:the_quadratic_model:1}
    s: \C \to \C^2, \ s(z) = (z,z^2).
\end{equation}
We are going to show that this map is strictly saddle and locally metric-minimising.\smallskip

To prove the strict saddle property, we write \(z = x + iy\) and obtain \(s(x,y) = (x,y,x^2-y^2,2xy)\). A frame for the tangent space is given by
\begin{equation*}
    s_x = (1,0,2x,2y) \quad \text{and} \quad s_y = (0,1,-2y,2x),
\end{equation*}
and a suitable orthogonal frame for the normal space is
\begin{equation*}
    N_1 = (-2x,2y,1,0) \quad \text{and} \quad N_2 = (-2y,-2x,0,1).
\end{equation*}
Hence any normal vector field \(\nu\) admits the form
\begin{equation*}
    \nu = \lambda_1 N_1 + \lambda_2 N_2 \quad \text{for some} \ \lambda_1, \lambda_2 \in C^\infty(\R^2).
\end{equation*}
A computation yields
\begin{equation*}
    II^\nu = 2\begin{pmatrix}
        \lambda_1 & \lambda_2\\
        \lambda_2 & -\lambda_1
    \end{pmatrix}.
\end{equation*}
For \(\nu \neq 0\), its determinant is \(\det(II^\nu) = -4(\lambda_1^2 + \lambda_2^2)\). Thus the scalar second fundamental form is indefinite, so the map \(s\) is strictly saddle at every point.\smallskip

In order to show the metric-minimising property, we use that the complex components of the map from \zcref{eq:subsubsec:the_quadratic_model:1} are holomorphic. Hence all components of \(s\), written in real coordinates, are harmonic. Therefore, for every disc \(\D \subset \C\), the restriction \(s\restrict{\D}\) is the unique minimiser of the Dirichlet energy
\begin{equation*}
    E(h) := \frac{1}{2}\int_{\D} |dh|^2
\end{equation*}
among all maps \(h \in W^{1,2}(\D, \C^2)\) with \(h\restrict{\partial\D} = s\restrict{\partial\D}\). Now let \(f:\D \to \C^2\) be a continuous map that agrees with \(s\) on \(\partial \D\), and suppose that
\begin{equation}\label{eq:subsubsec:the_quadratic_model:2}
    \ell(f\circ\gamma) \leq \ell(s\circ\gamma) \quad \text{for all Lipschitz curves} \ \gamma:[0,1] \to \D.
\end{equation}
Then \(f\) is Lipschitz and \(g_f \leq g_s\) almost everywhere. This implies \(|df|^2 \leq |ds|^2\), so
\begin{equation*}
    E(f) \leq E(s).
\end{equation*}
Since \(s\) is the unique minimiser of the Dirichlet energy, we conclude that \(f = s\). In particular, we have equality in \eqref{eq:subsubsec:the_quadratic_model:2}, thus \(s\) is locally metric-minimising.

\begin{remark}\label{rem:dirichlet_energy}
    This energy argument is also used in \cite[Proposition 1.2]{petruninstadler2019metric} to prove that every smooth strictly saddle surface in \(\R^3\) is locally metric-minimising.
\end{remark}

\subsubsection{A Twisted Local Model}\label{subsubsec:a_twisted_local_model}

Now we consider a twisted version of \eqref{eq:subsubsec:the_quadratic_model:1}: 
\begin{equation}\label{eq:subsubsec:a_twisted_local_model:1}
    s:\C \to \C^2, \ z \mapsto (z,z^2e^{i|z|^2}).
\end{equation}
Geometrically, the idea is that the twist keeps the saddle property around the origin but deforms lengths. \smallskip

We first study the saddle property by computing the scalar second fundamental form. By rotational symmetry, it suffices to consider points \(p \in [0,\infty)\). Let us introduce the auxiliary function
\begin{equation*}
    f(z) = (z+p,(z+p)^2e^{i|z+p|^2}).
\end{equation*}
In order to analyse \(s\) at \(p\), it suffices to analyse \(f\) at the origin. We write \(z = x + iy\) and obtain
\begin{equation*}
    f(x,y) = (x+p, y, u(x,y), v(x,y)),
\end{equation*}
where
\begin{equation*}
    g(x,y) := u(x,y) + iv(x,y) = (x+p+iy)^2e^{ir^2} \quad \text{and} \quad r=\sqrt{(x+p)^2 + y^2}.
\end{equation*}
Let us consider the normal vector fields
\begin{equation*}
    N_1 = (-u_x,-u_y,1,0) \quad \text{and} \quad N_2 = (-v_x, -v_y, 0, 1).
\end{equation*}
Then any normal vector field \(\nu\) admits the form 
\begin{equation*}
    \nu = \lambda_1 N_1 + \lambda_2 N_2 \quad \text{for} \ \lambda_1, \lambda_2 \in C^\infty(\R^2).
\end{equation*}
Moreover, we observe the relation
\begin{equation}\label{eq:subsubsec:saddle_surfaces:3}
    II_p^\nu = \lambda_1 D^2u + \lambda_2 D^2v = \lambda_1 \Re(D^2g(0,0)) + \lambda_2 \Im(D^2g(0,0)).
\end{equation}
A lengthy but straightforward computation yields
\begin{equation}
    D^2g(0,0) = e^{ip^2}\begin{pmatrix}
        2-4p^4+10ip^2 & -4p^2+2i\\
        -4p^2+2i & -2+2ip^2
    \end{pmatrix} = 2e^{ip^2} \left(\begin{pmatrix}
        1 & i\\
        i & -1
    \end{pmatrix} + O(p^2)\right).
\end{equation}
By continuity, every non-zero scalar second fundamental form is indefinite in a sufficiently small disc \(\D\) around the origin. Hence \(s\) is strictly saddle, and therefore saddle by \zcref{prop:saddle_second_fundamental_form}, in \(\D\).

Now choose \(\lambda_1 = - \sin(p^2)\) and \(\lambda_2 = \cos(p^2)\) in \zcref{eq:subsubsec:saddle_surfaces:3}. This choice removes the phase factor and extracts the imaginary part, yielding
\begin{equation*}
    II_p^\nu = \Im \begin{pmatrix}
    2-4p^4+10ip^2 & -4p^2+2i\\
    -4p^2+2i & -2+2ip^2
    \end{pmatrix}
    = \begin{pmatrix}
        10p^2 & 2\\
        2 & 2p^2
    \end{pmatrix}.
\end{equation*}
For sufficiently large \(p\), this second fundamental form is positive definite. Hence, by \zcref{prop:saddle_second_fundamental_form}, the map \(s\) is not saddle sufficiently far from the origin.\smallskip

Now we show that the map \zcref{eq:subsubsec:a_twisted_local_model:1} is not locally metric-minimising at the origin. We consider a disc \(\D\) of arbitrary radius \(R > 0\) with centre \(0\) and compute the Riemannian metric induced by \(s\) on \(\D\). To do so, we write in polar coordinates \(s(r,\theta) = (re^{i\theta}, r^2e^{i(2\theta + r^2)})\). We compute
\begin{align*}
    s_r &= (e^{i\theta}, (2r+2ir^3)e^{i(2\theta + r^2)})\\
    s_\theta &= (ire^{i\theta}, 2ir^2e^{i(2\theta + r^2)})\\
    |s_r|^2 &= 1 + 4r^2 + 4r^6\\
    |s_\theta|^2 &= r^2+4r^4\\
    \langle s_r,s_\theta\rangle &= \Re\left(e^{i\theta} \cdot \overline{ire^{i\theta}} + (2r + 2ir^3)e^{i(2\theta + r^2)} \cdot \overline{2ir^2e^{i(2\theta + r^2)}}\right) = 4r^5.
\end{align*}
We make the change of coordinates \(\theta = \vartheta + \psi(r)\) in order to obtain a diagonal Riemannian metric. We set \(\tilde{s}(r, \vartheta) := s(r,\vartheta + \psi(r))\), and compute
\begin{align*}
    \tilde{s}_r &= s_r + \psi'(r)s_\theta\\
    \tilde{s}_\vartheta &= s_\theta\\
    \langle \tilde{s}_r, \tilde{s}_\vartheta\rangle &= \langle s_r, s_\theta\rangle + \psi'(r)|s_\theta|^2.
\end{align*}
Now we choose \(\psi'(r) = -\frac{4r^5}{r^2+4r^4}\), and the Riemannian metric becomes diagonal with
\begin{equation}\label{eq:prop:not_locally_metric-minimising_origin:1}
    g_{\tilde{s}} = \left(1+4r^2+\frac{4r^6}{1+4r^2}\right)dr^2 + (r^2+4r^4)d\vartheta^2.
\end{equation}
We want \(\psi(R) = 0\) in order to preserve the boundary values, i.e.
\begin{equation*}
    \psi(r) = \int_{r}^R \frac{4t^5}{t^2+4t^4} \dd t.
\end{equation*}
Since \(\psi\) is a smooth function of \(r^2\) near \(r=0\), this change of coordinates extends smoothly across the origin. Now let us consider the map \(\tilde{s}'(r,\vartheta) = (re^{i\vartheta}, r^2e^{i(2\vartheta + R^2)})\). The preceding calculations give
\begin{equation*}
    g_{\tilde{s}'} = (1+4r^2)dr^2 + (r^2+4r^4)d\vartheta^2.
\end{equation*}
We clearly have \(\tilde{s}\restrict{\partial\D} = \tilde{s}'\restrict{\partial\D}\) and
\begin{equation*}
    g_{\tilde{s}} - g_{\tilde{s}'} = \frac{4r^6}{1+4r^2} dr^2 \geq 0.
\end{equation*}
The radial curve \(\gamma:[0,R] \to \D, \gamma(r) = r\) gives the strict inequality \(\ell(\tilde{s} \circ \gamma) > \ell(\tilde{s}' \circ \gamma)\), so \(\tilde{s}\) is not locally metric-minimising at the origin. Let 
\begin{equation*}
    s'(r,\theta) = \tilde{s}'(r, \theta - \psi(r))
\end{equation*}
be the map resulting from reversing the change of coordinates above. Therefore, \(s\restrict{\partial\D} = s'\restrict{\partial\D}\). According to \zcref{lem:metric_minimising_riemannian_metric},
\begin{equation*}
    \ell(s \circ \gamma) \geq \ell(s' \circ \gamma) \quad \text{for all Lipschitz curves} \ \gamma:[0,1] \to \D.
\end{equation*}
The inequality is strict along a curve. Thus \(s\) is not locally metric-minimising at the origin. In \zcref{subsubsec:an_application_to_the_twisted_local_model} we will even prove that \(s\) is not locally metric-minimising in a small disc around the origin.

\section{Infinitesimal Deformations}\label{sec:infinitesimal_deformations}

In this section, we study infinitesimal deformations, which we use both to provide a counterexample to local metric minimality and to prove \zcref{introduction:thm:density_theorem}.

Let \(\Omega \subset \R^2\) be an open subset and \(s:\Omega \to \R^d\) be a \(C^1\)-immersion. We denote by \(g_s := s^*g_{\R^d}\) the induced Riemannian metric. If it is clear what is meant, we drop the lower index \(s\).

\begin{definition}\label{def:infinitesimal_metric_operator}
    Let \(Y : \Omega \to \R^d\) be a vector field along \(s\), and consider the variation \(s_t := s + tY\). We define the \emph{infinitesimal metric operator}
    \begin{equation}\label{eq:def:infinitesimal_metric_operator:1}
        \mathcal{E} : \Gamma(s^*(T\R^d)) \to \Gamma(\Sym^2(T^*\Omega)), \ \mathcal{E}(Y) := \frac{1}{2} \frac{d}{dt}\restrict{t = 0} g_{s_t}.
    \end{equation}
\end{definition}

Explicitly, \zcref{eq:def:infinitesimal_metric_operator:1} means that
\begin{equation*}
    \mathcal{E}(Y)(X,Z) = \frac{1}{2} \left(\langle Ds(X), DY(Z)\rangle + \langle Ds(Z), DY(X)\rangle\right) \quad \text{for all} \ X,Z \in \Gamma(T\Omega).
\end{equation*}
Assume from now on that \(s\) is a \(C^2\)-immersion. We define the bundle map \(\mathcal{B} : N_s\Omega \to \Sym^2(T^*\Omega)\) fibrewise by \(\mathcal{B}_p(\nu) = II_p^\nu\). Explicitly, this means for fixed \(p \in \Omega\)
\begin{equation*}
    \mathcal{B}_p(\nu)(X,Z) = \langle II_p (X,Z), \nu\rangle \quad \text{for all} \ \nu \in (N_s\Omega)_p, \ X,Z \in T_p \Omega.
\end{equation*}
Given a vector field \(Y \in \Gamma(s^*(T\R^d))\), let \(Y = Ds(V) + \nu\) be the decomposition of \(Y\) in its tangential and normal parts. By the Gauss formula \cite[Theorem 8.2]{lee2018introduction}
\begin{equation*}
    D(Ds(V))(Z) = Ds(\nabla_Z V) + II(Z,V),
\end{equation*}
while
\begin{equation*}
    \mathcal{E}(Y)(X,Z) = \frac{1}{2}(g(X, \nabla_Z V) + g(\nabla_XV, Z)) - II^\nu(X,Z),
\end{equation*}
and hence
\begin{equation}\label{eq:sec:infinitesimal_deformations:1}
    \mathcal{E}(Y) = \sym(\nabla V^\flat)-\mathcal{B}(\nu).
\end{equation}
This formula will be used throughout the section.

\subsection{Inhomogeneous Infinitesimal Deformations}\label{subsec:inhomogeneous_infinitesimal_deformations}

In this subsection, we study the inhomogeneous equation
\begin{equation*}
    \mathcal{E}(Y) = H \quad \text{for} \ H \in \Gamma(\Sym^2(T^*\Omega))
\end{equation*}
and derive a local criterion for the failure of metric minimality.

At first, we note that if the infinitesimal metric operator admits a linear differential right inverse, then it is nowhere locally metric minimising, which is content of the next proposition.

\begin{proposition}\label{prop:metric_operator_right_inverse_locally_metric-minimising}
    Let \(s:\Omega \to \R^d\) be a smooth immersion and let \(U \subset \Omega\) be open. Suppose that the infinitesimal metric operator admits a linear differential right inverse of finite order
    \begin{equation*}
        R : \Gamma_c(\Sym^2(T^*U)) \to \Gamma_c(s^*(T\R^d)\restrict{U}),
    \end{equation*}
    that is,
    \begin{equation*}
        \mathcal{E}(R(H)) = H \quad \text{for every} \ H \in \Gamma_c(\Sym^2(T^*U)).
    \end{equation*}
    Then \(s\) is nowhere locally metric-minimising in \(U\).
\end{proposition}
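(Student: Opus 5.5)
Fix \(p \in U\) and a disc \(\D\) centred at \(p\), which we may take inside \(U\) after shrinking. We will exhibit a continuous (in fact smooth) map \(s'\) that agrees with \(s\) on \(\partial \D\), satisfies \(g_{s'} \leq g_s\) pointwise, and is strictly shorter along some curve. By \zcref{lem:metric_minimising_riemannian_metric} this shows that \(s\restrict{\D}\) is not metric-minimising. Since \(\D\) is arbitrary, \(s\) is then not locally metric-minimising at \(p\).

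\textbf{Construction.} Choose a smooth \(H \in \Gamma_c(\Sym^2(T^*U))\) with \(\supp H \subset \operatorname{int}\D\), \(H \leq 0\) everywhere and \(H_p < 0\), for instance \(H = -\varphi\, g_s\) with a bump function \(\varphi \geq 0\), \(\varphi(p) > 0\). Set \(Y := R(H)\). Because \(R\) is a differential operator, it is local and does not enlarge supports, so \(\supp Y \subset \supp H \subset \operatorname{int}\D\). Put \(s_t := s + tY\). Expanding the pullback metric gives
\begin{equation*}
    g_{s_t} = g_s + 2t\,\mathcal{E}(Y) + t^2 (DY)^\top DY = g_s + 2tH + t^2 (DY)^\top DY .
\end{equation*}
Hence \(s_t = s\) on \(\partial\D\), and we need \(2tH + t^2(DY)^\top DY \leq 0\) for some small \(t > 0\).

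\textbf{Main obstacle.} This last inequality is not automatic. Where \(\varphi\) is small, the negative term \(2tH\) may fail to dominate \(t^2|DY|^2\), and near the boundary of \(\supp\varphi\) the ratio \(|DY|^2/\varphi\) can blow up. The fix is to exploit the finite order \(k\) of \(R\) by choosing \(\varphi = \psi^{2m}\) for a bump function \(\psi\) and a large integer \(m\). Then \(Y = R(-\psi^{2m} g_s)\) is a sum of terms, each divisible by \(\psi^{2m-k}\) times smooth functions. Therefore \(DY\) is divisible by \(\psi^{2m-k-1}\), so \(|DY|^2 \leq C\psi^{4m-2k-2}\). Taking \(m \geq k+1\) gives \(4m-2k-2 \geq 2m\), so \(|DY|^2 \leq C\psi^{2m}\), with \(C\) depending on \(m\) and on \(\psi\) and \(s\) up to order \(k+1\) on the compact set \(\supp\psi\). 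Consequently
\begin{equation*}
    2tH + t^2 (DY)^\top DY \leq \psi^{2m}\bigl(-2t\,g_s + t^2 C' g_s\bigr) \leq 0
\end{equation*}
for \(0 < t \leq 2/C'\), using that \(g_s\) is uniformly comparable to the Euclidean metric on \(\supp\psi\). Choosing \(t\) strictly smaller makes the inequality strict at \(p\).

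\textbf{Conclusion.} Set \(s' := s_t\). Then \(s'\) is smooth, agrees with \(s\) on \(\partial\D\), and satisfies \(g_{s'} \leq g_s\), so \(\ell(s'\circ\gamma)\leq \ell(s\circ\gamma)\) for all Lipschitz curves by \zcref{lem:metric_minimising_riemannian_metric}. Moreover \(g_{s'} < g_s\) on a neighbourhood of \(p\), so a short segment through \(p\) gives \(\ell(s'\circ\gamma) < \ell(s\circ\gamma)\). Thus \(s\restrict{\D}\) is not metric-minimising (\zcref{def:metric_minimising}), and since \(p\) and \(\D\) were arbitrary, \(s\) is nowhere locally metric-minimising in \(U\).
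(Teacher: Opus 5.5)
Your proof is correct and follows the paper's argument: take \(H=-\rho\, g_s\) for a bump function with \(|D^j\rho|^2\le C_j\rho\), set \(Y=R(H)\), and use the resulting bound \(|DY(X)|^2\le C\rho\, g_s(X,X)\) to get \(g_{s_t}\le g_s\), with strict inequality where \(\rho>0\). Your choice \(\rho=\psi^{2m}\) with \(m\ge k+1\) explicitly constructs the bump function whose existence the paper only asserts, and it shows that only the estimates for \(j\le k+1\) are needed.
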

\begin{proof}
    Fix \(p \in U\), and let \(\D \Subset U\) be an arbitrary disc centred at \(p\). We choose a non-negative function \(\rho \in C_c^\infty(D)\), positive on a non-empty disc \(\D' \Subset D\), and constants \(C_j\) such that
    \begin{equation*}
        |D^j\rho|^2 \leq C_j \rho \quad \text{for every} \ j \geq 0.
    \end{equation*}
    Set \(H := -\rho g_s\) and \(Y := R(H)\). Since \(R\) is a differential operator, \(\supp Y \subset \supp H \subset D\). Moreover,
    \begin{equation*}
        \mathcal{E}(Y) = -\rho g_s.
    \end{equation*}
    Let \(k\) be the order of \(R\). Since \(H = -\rho g_s\), the first derivatives of \(Y = R(H)\) involve derivatives of \(\rho\) of order at most \(k+1\). The coefficients of \(R\) as well as the derivatives of \(g_s\) are bounded on the disc \(\D\). Hence we find \(C > 0\) such that
    \begin{equation*}
        |DY(X)|^2 \leq C\rho g_s(X,X) \quad \text{for every} \ X \in T\D.
    \end{equation*}
    Now consider \(s_t := s + tY\). Then we obtain for \(X \in T\D\)
    \begin{equation*}
        \begin{split}
            (g_{s_t} - g_s)(X,X) &= 2t \mathcal{E}(Y)(X,X) + t^2 |DY(X)|^2 = -2t\rho g_s(X,X) + t^2|DY(X)|^2\\
            &\leq (-2t + Ct^2)\rho g_s(X,X).
        \end{split}
    \end{equation*}
    Now choose \(t_0 > 0\) such that \(-2t + Ct^2 < 0\) for \(0 < t < t_0\), then
    \begin{equation*}
        g_{s_t} \leq g_s \quad \text{on} \ \D,
    \end{equation*}
    while on \(\D'\), where \(\rho > 0\),
    \begin{equation*}
        g_{s_t}(X,X) < g_s(X,X) \quad \text{for every} \ X \in T\D'\setminus\{0\}.
    \end{equation*}
    Since \(Y\) is compactly supported in \(D\), the map \(s_t = s + tY\) agrees with \(s\) on \(\partial \D\). Moreover, the metric inequality is strict along every non-constant smooth curve contained in \(\D'\). Hence \(s\restrict{\D}\) is not metric-minimising. Since \(\D\) was arbitrary, \(s\) is not locally metric-minimising at \(p\).
\end{proof}

Let \(s: \Omega \to \R^d\) be a smooth immersion, and suppose that \(\rank \mathcal{B} = 2\) on an open set \(U \subset \Omega\). Since \(\Sym^2(T_p^*\Omega)\) has dimension \(3\), the image \(\Ima \mathcal{B}_p \subset \Sym^2(T_p^*\Omega)\) has a one-dimensional annihilator. We therefore introduce the line bundle \(\mathcal{A} := \ker (\mathcal{B}^*\restrict{U}) \subset \Sym^2(TU)\), where
\begin{equation*}
    \mathcal{B}^*:\Sym^2(T\Omega) \to (N_s\Omega)^*, \ (B^*(A))(\nu) := \langle A, B(\nu)\rangle_c
\end{equation*}
is the dual bundle map. The line bundle \(\mathcal{A}\) is fibrewise given by
\begin{equation*}
    \mathcal{A}_p := \{A \in \Sym^2(T_p\Omega) : \langle A, H\rangle_c = 0 \ \text{for every} \ H \in \Ima\mathcal{B}_p\}.
\end{equation*}
We use here the canonical contraction
\begin{equation*}
    \langle A, H\rangle_c := \trace (A^\sharp \circ H^\flat) \quad \text{for} \ A \in \Sym^2(T_p\Omega) \ \text{and} \ H \in \Sym^2(T_p^* \Omega).
\end{equation*}
After shrinking \(U\), we choose a nowhere-vanishing section \(A \in \Gamma(\mathcal{A})\). Then, by definition
\begin{equation*}
    \langle A_q, \mathcal{B}_q(\nu)\rangle_c = 0 \quad \text{for all} \ \nu \in (N_s\Omega)_q.
\end{equation*}
So, for every \(p \in U\), it follows that \(\mathcal{A}\) gives \(\Ima \mathcal{B}_p \subset \ker (\langle A_p, \cdot \rangle_c) \subset \Sym^2(T_p^*U)\). Both spaces have dimension \(2\): The first because \(\rank \mathcal{B} = 2\) on \(U\), and the second because \(A_p\) defines a non-zero linear functional on the three-dimensional space \(\Sym^2(T_p^*U)\). Hence \(\Ima \mathcal{B}_p = \ker (\langle A_p, \cdot \rangle_c)\).

Now let us write \(Y = Ds(V) + \nu\). In order to find \(\mathcal{B}(\nu) = \sym(\nabla V^\flat) + H\) (compare to \zcref{eq:sec:infinitesimal_deformations:1}), we therefore may study the equation
\begin{equation*}
    \langle A, \sym(\nabla V^\flat) + H\rangle_c = 0.
\end{equation*}
This motivates the study of the first-order differential operator
\begin{equation*}
    L_A : \Omega_c^1(U) \to C_c^\infty(U), \ L_A\omega := \langle A, \sym(\nabla \omega)\rangle_c.
\end{equation*}
We will derive below a criterion to obtain a differential right inverse of this operator; see \zcref{lem:contraction_operator_surjective}, and this will apply by following the preceding argumentation that \(\mathcal{E}\) admits a differential right-inverse operator; see \zcref{lem:right_inverse_operator}.

Suppose that \(A\) is non-degenerate, i. e. \(A^\sharp:T^*U \to TU\) is a bundle isomorphism. We now define the \(1\)-form
\begin{equation*}
    \alpha_A := -(A^\sharp)^{-1}\divg (A), \quad \text{where} \quad \divg A = (\nabla_{i}{A^{ij}}\partial_j) \in \Gamma(TU).
\end{equation*}
\begin{definition}\label{def:twist_form}
    We define the \emph{twist form} of \(s\) by \(\Theta := d\alpha_A \in \Omega^2(U)\).
\end{definition}

\begin{remark}\label{rem:twist_form_is_intrinsic_size}
    Although \(\alpha_A\) depends on the choice of \(A\), the twist form \(\Theta\) does not. Indeed, if \(\tilde{A} = \lambda A\) for a nowhere vanishing function \(\lambda\), then 
    \begin{equation*}
        \divg(\tilde{A}) = \nabla_i(\lambda A^{ij})\partial_j = (\partial_i \lambda)A^{ij}\partial_j + \lambda \nabla_i A^{ij}\partial_j = A^\sharp(d\lambda) + \lambda \divg(A).
    \end{equation*}
    We therefore obtain
    \begin{equation*}
        \alpha_{\tilde{A}} = -(\tilde{A}^\sharp)^{-1}\divg(\tilde{A}) = - \frac{1}{\lambda}(A^\sharp)^{-1}(\lambda \divg(A) + A^\sharp(d\lambda)) = \alpha_A - \frac{d\lambda}{\lambda} = \alpha_A - d\log|\lambda|,
    \end{equation*}
    and conclude \(d\alpha_{\tilde{A}} = d\alpha_A\).
\end{remark}

In the following two lemmas we will construct a right inverse of the infinitesimal metric operator in order to apply \zcref{prop:metric_operator_right_inverse_locally_metric-minimising}.

\begin{lemma}\label{lem:contraction_operator_surjective}
    Let \(U \subset \Omega\) be open and suppose that \(A \in \Gamma(\Sym^2(TU))\) is smooth, symmetric and non-degenerate. If \(\Theta\) is nowhere vanishing on \(U\), then
    \begin{equation*}
        L_A : \Omega_c^1(U) \to C_c^\infty(U)
    \end{equation*}
    is surjective and admits a first-order differential right inverse.
\end{lemma}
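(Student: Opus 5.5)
The plan is to rewrite \(L_A\) as a twisted exterior derivative on \(1\)-forms and then write down an explicit first-order right inverse. The curvature of the twist is exactly \(\Theta\), so its non-vanishing is what makes the construction work.

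\emph{Step 1 (divergence form).} For \(\omega \in \Omega^1_c(U)\) set \(X := A^\sharp\omega\), which is a bundle isomorphism in \(\omega\) since \(A\) is non-degenerate. Because \(A\) is symmetric,
\begin{equation*}
    L_A\omega = A^{ij}\nabla_i\omega_j = \nabla_i(A^{ij}\omega_j) - (\nabla_iA^{ij})\omega_j = \divg X - (\divg A)(\omega).
\end{equation*}
Since \(\divg A = -A^\sharp\alpha_A\), we have \(-(\divg A)(\omega) = \alpha_A(A^\sharp\omega) = \alpha_A(X)\). Hence \(L_A\omega = \divg X + \alpha_A(X)\).

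Let \(\mu\) be the Riemannian area form of \(g_s\) on \(U\), and set \(\eta := \iota_X\mu\). This is again a pointwise isomorphism, now between vector fields and \(1\)-forms. We have \(d\eta = (\divg X)\mu\) and \(\alpha_A\wedge\eta = \alpha_A(X)\mu\). Therefore
\begin{equation*}
    L_A\omega\,\mu = (d + \alpha_A\wedge)\eta.
\end{equation*}
Surjectivity of \(L_A\) with a first-order right inverse is thus equivalent to finding, for each \(h \in C_c^\infty(U)\), a compactly supported \(1\)-form \(\eta = P(h)\), depending linearly on \(h\) and \(dh\), with \((d+\alpha\wedge)\eta = h\mu\). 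Here \(\alpha := \alpha_A\).

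\emph{Step 2 (ansatz).} Write \(\Theta = \theta\mu\) with \(\theta\) nowhere zero. I try
\begin{equation*}
    \eta = \psi\,dh + h\beta
\end{equation*}
with a smooth function \(\psi\) and a \(1\)-form \(\beta\) to be chosen. A direct computation gives
\begin{equation*}
    (d+\alpha\wedge)\eta = dh\wedge(\beta + d\psi - \psi\alpha) + h(d\beta + \alpha\wedge\beta).
\end{equation*}
The derivatives of \(h\) are removed by choosing \(\beta := \psi\alpha - d\psi\). For this \(\beta\), using \(d\beta = d\psi\wedge\alpha + \psi\Theta\) and \(\alpha\wedge\beta = d\psi\wedge\alpha\), the zeroth-order term becomes \(h(\psi\Theta + 2\,d\psi\wedge\alpha)\). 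So it remains to find \(\psi\) with
\begin{equation*}
    \psi\theta + 2\,\frac{d\psi\wedge\alpha}{\mu} = 1,
\end{equation*}
which is a first-order linear transport equation along the vector field \(Z\) defined by \(d\psi\wedge\alpha = d\psi(Z)\mu\), namely \(Z = -\mu^{-1}\)-dual of \(\alpha\). If \(\theta\) happened to be constant, \(\psi = 1/\theta\) would already work.

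\emph{Step 3 (solving for \(\psi\) and globalising).} The main obstacle is solving \(2Z\psi + \theta\psi = 1\) on all of \(U\); this is where \(\Theta\neq 0\) enters.

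Near a point where \(\alpha\neq 0\), I would straighten \(Z\) and solve by the method of characteristics, which gives a smooth local solution. Near a point where \(\alpha\) vanishes, the zeroth-order coefficient \(\theta\) is non-zero, so I expect a local solution close to \(1/\theta\). I would obtain it either by a contraction/perturbation argument on a small disc, or by first replacing \(A\) by \(\lambda A\): by \zcref{rem:twist_form_is_intrinsic_size} this changes \(\alpha\) by an exact form \(-d\log|\lambda|\) and leaves \(\Theta\) unchanged, so \(\alpha\) can be made non-vanishing locally.

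To avoid any global solvability issue I then localise. Cover \(U\) by such discs \(U_k\) with local solutions \(\psi_k\), take a locally finite partition of unity \(\chi_k\), and define
\begin{equation*}
    P(h) := \sum_k \bigl(\psi_k\,d(\chi_k h) + \chi_k h\,(\psi_k\alpha - d\psi_k)\bigr).
\end{equation*}
This is linear and first order in \(h\), and it preserves compact support because each summand is supported in \(\supp\chi_k\cap\supp h\). It satisfies \((d+\alpha\wedge)P(h) = \sum_k\chi_k h\,\mu = h\mu\).

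Finally set
\begin{equation*}
    R(h) := (A^\sharp)^{-1}X, \qquad \text{where } \iota_X\mu = P(h).
\end{equation*}
This \(R\) is a first-order differential operator \(C_c^\infty(U)\to\Omega^1_c(U)\) with \(L_A R = \mathrm{id}\), which gives both the surjectivity and the right inverse claimed in \zcref{lem:contraction_operator_surjective}.
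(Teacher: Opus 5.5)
Your Step 1 is the same reduction as in the paper. With $\eta=\iota_X\mu$ (the paper's $*X^\flat$), the equation $L_A\omega=h$ becomes $(d+\alpha_A\wedge)\eta=h\mu$.

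\textbf{The gap is a sign error in Step 2.} Since $d(\psi\,dh)=d\psi\wedge dh=-dh\wedge d\psi$, the correct expansion is
\[
(d+\alpha\wedge)(\psi\,dh+h\beta)=dh\wedge(\beta-d\psi-\psi\alpha)+h\,(d\beta+\alpha\wedge\beta).
\]
With your choice $\beta=\psi\alpha-d\psi$, the first-order term is $-2\,dh\wedge d\psi$. This is non-zero unless $\psi$ is constant. A quick check: for $\alpha=0$ your expansion predicts $0$, but $d(\psi\,dh-h\,d\psi)=2\,d\psi\wedge dh$.

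So the operator $P$ of Step 3 does not satisfy $(d+\alpha\wedge)P(h)=h\mu$. Each summand leaves a residual $-2\,d(\chi_k h)\wedge d\psi_k$, and these do not cancel in general. The transport equation $2Z\psi+\theta\psi=1$, the method of characteristics, the rescaling of $A$ and the partition of unity are all consequences of this sign error.

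\textbf{The fix.} Choose $\beta=\psi\alpha+d\psi$. Then
\[
d\beta=d\psi\wedge\alpha+\psi\Theta, \qquad \alpha\wedge\beta=\alpha\wedge d\psi=-d\psi\wedge\alpha,
\]
so the zeroth-order term reduces to $h\psi\Theta$. Taking $\psi=1/\theta$ then works globally on $U$, because $\theta$ is nowhere zero. The resulting form is
\[
\eta=\psi\,dh+h\,d\psi+h\psi\alpha=d(h/\theta)+(h/\theta)\,\alpha_A .
\]
This is exactly the paper's choice $*X^\flat=d(f/\lambda)+(f/\lambda)\alpha_A$. It is visibly first order in $h$ and supported in $\supp h$, so no localisation or solving of a transport equation is needed.
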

\begin{proof}
    Let \(f \in C_c^\infty(U)\), and for a \(1\)-form \(\omega\), set \(X := A^\sharp(\omega)\). Then \(X^j = A^{ij}\omega_i\). Using the product rule and symmetry of \(A\), we find
    \begin{equation*}
        \divg(X) = \nabla_j(A^{ij}\omega_i) = (\nabla_j A^{ij})\omega_i + A^{ij}\nabla_{j}\omega_i = \omega(\divg A) + L_A \omega = -\omega(A^\sharp(\alpha_A)) + L_A\omega,
    \end{equation*}
    that is, using the symmetry of \(A\), 
    \begin{equation}\label{eq:lem:contraction_operator_surjective:1}
        L_A \omega = \divg X + \omega(A^\sharp(\alpha_A)) = \divg(X) + \alpha_A(X).
    \end{equation}
    Now let \(*\) denote the Hodge star operator associated with \(g\) and the standard orientation of \(\Omega\). Since \(d\alpha_A\) is nowhere vanishing, we find a smooth nowhere vanishing function \(\lambda\) such that
    \begin{equation*}
        d\alpha_A = \lambda dVol_g.
    \end{equation*}
    Since \(\alpha_A(X) = *(\alpha \wedge *X^\flat)\) and \(\divg X = *d*X^\flat\); see \cite[Problem 16-21]{lee2012introduction}, it suffices to solve the equation
    \begin{equation*}
        f = *d*X^\flat +  *(\alpha_A \wedge *X^\flat), \quad \text{or, equivalently,} \quad \frac{f}{\lambda}d\alpha_A = d *X^\flat + \alpha \wedge*X^\flat.
    \end{equation*}
    The choice \(*X^\flat = d\left(\frac{f}{\lambda}\right) + \frac{f}{\lambda}\alpha_A\) settles the claim. The construction of \(\omega\) depends differentially on \(f\) with order at most one. Moreover, none of the operations involved enlarges supports. Hence \(L_A\) admits a first-order differential right inverse.
\end{proof}

\begin{lemma}\label{lem:right_inverse_operator}
    Let \(s:\Omega \to \R^d\), \(d \geq 4\), be a smooth immersion, and suppose that \(\rank \mathcal{B} = 2\) on an open set \(U \subset \Omega\). Assume that the bundle \(\mathcal{A}\) is generated on \(U\) by a smooth non-degenerate section \(A\), and that the twist form \(\Theta\) is nowhere vanishing on \(U\). Then the infinitesimal metric operator
    \begin{equation*}
        \mathcal{E} : \Gamma_c(s^*(T\R^d)\restrict{U}) \to \Gamma_c(\Sym^2(T^*U))
    \end{equation*}
    admits a linear differential right inverse of order at most \(2\).
\end{lemma}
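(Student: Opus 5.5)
We construct the right inverse explicitly from the decomposition \eqref{eq:sec:infinitesimal_deformations:1}. Given \(H \in \Gamma_c(\Sym^2(T^*U))\), we look for \(Y = Ds(V) + \nu\) with
\begin{equation*}
    \mathcal{E}(Y) = \sym(\nabla V^\flat) - \mathcal{B}(\nu) = H,
\end{equation*}
that is, \(\mathcal{B}(\nu) = \sym(\nabla V^\flat) - H\). The argument has two steps. First, choose \(V\) so that the right-hand side lies pointwise in \(\Ima\mathcal{B}\). Second, invert \(\mathcal{B}\) algebraically on its image to obtain \(\nu\).

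\textbf{Step 1 (tangential part).} As shown above, \(\Ima\mathcal{B}_p = \ker\langle A_p,\cdot\rangle_c\) for every \(p\in U\). The condition on \(V\) is therefore
\begin{equation*}
    L_A(V^\flat) = \langle A, \sym(\nabla V^\flat)\rangle_c = \langle A, H\rangle_c .
\end{equation*}
The function \(f := \langle A,H\rangle_c\) lies in \(C_c^\infty(U)\) and depends on \(H\) algebraically (order \(0\)). By \zcref{lem:contraction_operator_surjective}, whose hypotheses are exactly ours, there is a first-order differential right inverse \(P\) of \(L_A\). We set \(V^\flat := P(f)\) and \(V := (V^\flat)^\sharp\) using \(g\). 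Then \(V\) is compactly supported in \(\supp H\) and depends on \(H\) through a differential operator of order \(1\).

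\textbf{Step 2 (normal part).} The form \(K := \sym(\nabla V^\flat) - H\) now satisfies \(\langle A,K\rangle_c = 0\), so \(K_p \in \Ima\mathcal{B}_p\) for every \(p\). Since \(\rank\mathcal{B} = 2\) is constant on \(U\), the kernel \(\ker\mathcal{B}\) is a smooth subbundle of \(N_s U\) of rank \(d-4 \geq 0\). Its orthogonal complement \(\mathcal{K} := (\ker\mathcal{B})^\perp\) is a smooth rank-\(2\) subbundle, and \(\mathcal{B}\restrict{\mathcal{K}} : \mathcal{K} \to \ker\langle A,\cdot\rangle_c\) is an isomorphism of smooth rank-\(2\) bundles. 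Its inverse \(\mathcal{B}^+\) is a smooth bundle map; it is the Moore--Penrose pseudo-inverse, which is smooth because the rank is constant. We put \(\nu := \mathcal{B}^+(K)\), which gives \(\mathcal{B}(\nu) = K\). Here \(\nu\) involves one derivative of \(V\), hence two derivatives of \(H\), and has support inside \(\supp H\).

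Setting \(R(H) := Ds(V) + \nu\), formula \eqref{eq:sec:infinitesimal_deformations:1} gives
\begin{equation*}
    \mathcal{E}(R(H)) = \sym(\nabla V^\flat) - K = H.
\end{equation*}
The map \(R\) is linear, is a composition of differential operators of total order at most \(2\), and preserves compact support in \(U\).

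The only delicate point is the smoothness of the algebraic inverse \(\mathcal{B}^+\) and of the subbundle \(\ker\mathcal{B}\). Both follow from the constancy of the rank on \(U\): locally one can write \(\mathcal{B}^+ = \mathcal{B}^*(\mathcal{B}\mathcal{B}^*)^{-1}\) on the image, with \(\mathcal{B}\mathcal{B}^*\) invertible there. The analytic content is entirely contained in \zcref{lem:contraction_operator_surjective}.
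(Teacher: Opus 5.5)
Your proposal is correct and follows the paper's proof essentially step for step: you solve \(L_A\omega = \langle A,H\rangle_c\) with the first-order right inverse from \zcref{lem:contraction_operator_surjective}, set \(V=\omega^\sharp\), and obtain \(\nu\) by inverting \(\mathcal{B}\) on \((\ker\mathcal{B})^\perp\) using constancy of the rank, exactly as the paper does. Your order count (\(0+1+1\)) and your observation that no step enlarges supports also match the paper's argument.
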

\begin{proof}
    By \zcref{lem:contraction_operator_surjective}, fix a first-order differential right inverse
    \begin{equation*}
        Q_A : C_c^\infty(U) \to \Omega_c^1(U) \quad \text{with} \ L_A(Q_A(f)) = f \quad \text{for every} \ f \in C_c^\infty(U).
    \end{equation*}
    Let \(H \in \Gamma_c(\Sym^2(T^*U))\), define
    \begin{equation*}
        f := \langle A,H\rangle_c, \quad \omega := Q_A(f) \quad \text{and} \quad S := \sym(\nabla \omega) - H.
    \end{equation*}
    Since \(Q_A\) is a right-inverse of \(L_A\), we obtain
    \begin{equation*}
        \langle A, S\rangle_c = \langle A, \sym(\nabla \omega)\rangle_c - \langle A, H\rangle_c = L_A \omega - f = 0.
    \end{equation*}
    It follows from the discussion before \zcref{def:twist_form} that \(S \in \Gamma_c(\Ima \mathcal{B}\restrict{U})\). Since \(\mathcal{B}\restrict{U}\) has constant rank, \(\ker \mathcal{B}\restrict U\) is a smooth subbundle of \(N_s\Omega \restrict{U}\). The restriction
    \begin{equation*}
        \mathcal{B}\restrict{\ker(\mathcal{B}\restrict{U})^\perp}:\ker(\mathcal{B}\restrict{U})^\perp \to \Ima \mathcal{B}\restrict{U}
    \end{equation*}
    is therefore a vector bundle isomorphism. Define \( \nu := (\mathcal{B}\restrict{\ker(\mathcal{B}\restrict{U})^\perp})^{-1}(S)\), \(V = \omega^\sharp\),
    and put
    \begin{equation*}
        R(H) := Ds(V) + \nu.
    \end{equation*}
    By construction, we have \(\mathcal{B}(\nu) = S = \sym(\nabla \omega) - H\),
    therefore
    \begin{equation*}
        \mathcal{E}(R(H)) = \sym(\nabla V^\flat) - \mathcal{B}(\nu) = \sym(\nabla \omega) - (\sym(\nabla \omega) - H) = H.
    \end{equation*}
    Thus \(R\) is a right inverse of \(\mathcal{E}\).

    It remains to determine its differential order. The contraction \(H \mapsto \langle A, H\rangle_c\) has order \(0\), the operator \(Q_A\) has order \(1\), and applying \(\sym \nabla\) contributes one additional derivative. All remaining operations have order \(0\), hence \(R\) is a linear differential operator of order at most \(2\). Finally, none of the preceding operations in the construction enlarge supports, therefore \(\supp R(H) \subset \supp H\), and \(R(H)\) is compactly supported in \(U\).
\end{proof}

\begin{theorem}\label{thm:twisting_form_locally_metric_minimising}
    Let \(\Omega \subset \R^2\) be an open subset and let \(s : \Omega \to \R^d\), where \(d \geq 4\), be a smooth immersion. Suppose that \(\rank \mathcal{B}_s = 2\) on an open subset \(U \subset \Omega\), and that \(\mathcal{A}\) is generated on \(U\) by a smooth non-degenerate section \(A\). If \(s\) is locally metric-minimising at \(p \in U\), then \(\Theta_p = 0\).
\end{theorem}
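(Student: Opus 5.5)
We argue by contraposition, combining \zcref{lem:right_inverse_operator} with \zcref{prop:metric_operator_right_inverse_locally_metric-minimising}. Suppose \(s\) is locally metric-minimising at \(p \in U\) but \(\Theta_p \neq 0\). Recall that \(\Theta = d\alpha_A\) is a smooth \(2\)-form on \(U\), and by \zcref{rem:twist_form_is_intrinsic_size} it does not depend on the choice of the generating section \(A\). By continuity we can therefore pick an open neighbourhood \(U' \subset U\) of \(p\), say a small open disc, on which \(\Theta\) vanishes nowhere.

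On \(U'\) all the hypotheses of \zcref{lem:right_inverse_operator} hold. We have \(d \geq 4\) and \(\rank \mathcal{B} = 2\). The bundle \(\mathcal{A}\) is generated by the restriction of the non-degenerate section \(A\), and \(\Theta\) vanishes nowhere. Hence the infinitesimal metric operator \(\mathcal{E}\colon \Gamma_c(s^*(T\R^d)\restrict{U'}) \to \Gamma_c(\Sym^2(T^*U'))\) has a linear differential right inverse \(R\) of order at most \(2\), with \(\supp R(H) \subset \supp H\). Applying \zcref{prop:metric_operator_right_inverse_locally_metric-minimising} with \(U'\) in place of \(U\) shows that \(s\) is nowhere locally metric-minimising in \(U'\), and in particular not at \(p\). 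This contradicts the assumption, so \(\Theta_p = 0\).

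The one point needing care is the order of quantifiers in \zcref{def:metric_minimising}. Local metric minimality at \(p\) only requires \emph{some} disc \(\D\) centred at \(p\) on which \(s\restrict{\D}\) is metric-minimising, and this disc need not lie in \(U'\). I would handle this by checking that metric minimality passes to smaller concentric discs. Take \(\D_r \subset \D\) with \(r\) small enough that \(\D_r \Subset U'\). A competitor \(s'\) on \(\D_r\) that agrees with \(s\) on \(\partial\D_r\) and does not increase lengths extends by \(s\) outside \(\D_r\). The extension is continuous, agrees with \(s\) on \(\partial\D\), and is length-non-increasing for every Lipschitz curve in \(\D\): split the curve into pieces inside and outside \(\D_r\). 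Strictness on some curve in \(\D_r\) persists, so metric minimality on \(\D\) forces it on \(\D_r\). This contradicts the construction in the proof of \zcref{prop:metric_operator_right_inverse_locally_metric-minimising}, which produces a strict competitor on every disc \(\D \Subset U'\) centred at \(p\).

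I expect this monotonicity step to be the only genuine obstacle. The rest is a direct assembly of the two earlier results together with the choice-independence of \(\Theta\).
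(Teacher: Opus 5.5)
Your proposal is correct and follows the paper's argument: you pick a neighbourhood of \(p\) on which \(\Theta\) is nowhere zero, apply \zcref{lem:right_inverse_operator} there, and conclude with \zcref{prop:metric_operator_right_inverse_locally_metric-minimising}. Your extra shrinking-disc step handles a point the paper leaves implicit, since that proposition only tests discs \(\D \Subset U\). It is most easily done by noting that the competitor \(s+tY\), with \(Y\) supported in the small disc, is already smooth on the larger disc and satisfies \(g_{s+tY}\le g_s\) there. Then \zcref{lem:metric_minimising_riemannian_metric} applies directly, and you avoid splitting Lipschitz curves that may cross \(\partial\D_r\) infinitely often.
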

\begin{proof}
    Suppose, to the contrary, that \(\Theta_p \neq 0\). By continuity, there exists an open neighbourhood \(V \subset U\) of \(p\) on which \(\Theta\) is nowhere vanishing. Applying \zcref{lem:right_inverse_operator} on \(V\), we find that
    \begin{equation*}
        \mathcal{E} : \Gamma_c(s^*(T\R^d)\restrict{V}) \to \Gamma_c(\Sym^2(T^*V))
    \end{equation*}
    admits a differential right inverse of order at most \(2\). By \zcref{prop:metric_operator_right_inverse_locally_metric-minimising}, \(s\) is therefore not locally metric-minimising at \(p\), contradicting the hypothesis. Hence \(\Theta_p = 0\).
\end{proof}

\begin{lemma}\label{lem:twisting_form_graph_coordinates}
    Let \(U \subset \R^2\) be an open set and \(s:U\to\R^d\), \(d \geq 4\), be a smooth immersion. Suppose that \(s\) is given by
    \begin{equation*}
        s(x,y) = (x,y,u^1(x,y),\dots,u^{d-2}(x,y)).
    \end{equation*}
    Assume that \(\rank \mathcal{B} = 2\) and assume that \(\mathcal{A}\) is generated on \(U\) by a smooth non-degenerate section \(A\). Write \(A = A^{ij} \partial_i \otimes \partial_j\), and define the (coordinate) divergence \(\divg_0 A := (\partial_i A^{ij}) \partial_j\). For \(\alpha := -(A^\sharp)^{-1} \divg_0 A\), the twist form \(\Theta\) is given by
    \begin{equation*}
        \Theta = (\partial_x \alpha_2 - \partial_y \alpha_1) dx \wedge dy = \rot \alpha \ dx \wedge dy.
    \end{equation*}
\end{lemma}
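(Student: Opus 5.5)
The plan is to compare the covariant divergence \(\divg A = (\nabla_i A^{ij})\partial_j\) of \(A\) with the coordinate divergence \(\divg_0 A\). We show that the two resulting \(1\)-forms differ by an exact form, so their exterior derivatives agree. The first step is to expand the covariant derivative of the \((2,0)\)-tensor \(A\) in the coordinates \((x,y)\):
\begin{equation*}
    \nabla_i A^{ij} = \partial_i A^{ij} + \Gamma^i_{ik}A^{kj} + \Gamma^j_{ik}A^{ik}.
\end{equation*}
The middle term uses the standard identity \(\Gamma^i_{ik} = \partial_k \log\sqrt{\det g}\) for the Levi-Civita connection of \(g = g_s\), with \(\det g > 0\) since \(s\) is an immersion. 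It therefore equals \(A^{kj}\partial_k \log\sqrt{\det g} = (A^\sharp(d\log\sqrt{\det g}))^j\), and
\begin{equation*}
    \divg A = \divg_0 A + A^\sharp\bigl(d\log\sqrt{\det g}\bigr) + W, \qquad W := \Gamma^j_{ik}A^{ik}\partial_j .
\end{equation*}
Applying \(-(A^\sharp)^{-1}\) gives
\begin{equation*}
    \alpha_A = \alpha - d\log\sqrt{\det g} - (A^\sharp)^{-1}W ,
\end{equation*}
and hence \(\Theta = d\alpha_A = d\alpha - d\bigl((A^\sharp)^{-1}W\bigr)\).

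The main step, and the only place where the geometry of \(s\) enters, is to show that the term \(W\), built from \(\Gamma^j_{ik}A^{ik}\), contributes nothing. I expect this to be the main obstacle. The approach uses the graph form of \(s\). Write \(u = (u^1,\dots,u^{d-2})\). Then \(g_{ij} = \delta_{ij} + \langle \partial_i u, \partial_j u\rangle\) and \(\partial_i\partial_k s = (0,0,\partial_i\partial_k u)\), so the Christoffel symbols of the first kind are
\begin{equation*}
    \Gamma_{ik,l} = \langle \partial_i\partial_k s, \partial_l s\rangle = \langle \partial_i\partial_k u, \partial_l u\rangle .
\end{equation*}
Since \(D^2 s = II + Ds(\Gamma)\), we have \(\Gamma^j_{ik}\partial_j s = (\partial_i\partial_k s)^\top\). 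The vector field \(\Gamma^j_{ik}A^{ik}\,\partial_j s\) is thus the tangential part of \(A^{ik}\partial_i\partial_k s\).

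The key is that \(A\) annihilates the image of \(\mathcal{B}\). For every normal \(\nu\), the annihilator condition \(\langle A, II^\nu\rangle_c = 0\) says that \(A^{ik}\langle \partial_i\partial_k s, \nu\rangle = 0\). Hence the normal part of \(A^{ik}\partial_i\partial_k s\) vanishes, and \(A^{ik}\partial_i\partial_k s\) is purely tangential. But \(\partial_i\partial_k s = (0,0,\partial_i\partial_k u)\) has zero first two components, while a tangent vector \(Ds(V) = (V^1,V^2,\dots)\) has first two components equal to \(V\). Therefore this tangent vector is \(0\), so \(W = 0\).

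With \(W = 0\) we obtain \(\alpha_A = \alpha - d\log\sqrt{\det g}\), and so \(\Theta = d\alpha_A = d\alpha\). Writing \(\alpha = \alpha_1\,dx + \alpha_2\,dy\) gives
\begin{equation*}
    d\alpha = (\partial_x\alpha_2 - \partial_y\alpha_1)\,dx\wedge dy = \rot \alpha \, dx\wedge dy,
\end{equation*}
which is the claimed formula. This conclusion is independent of the choice of \(A\) by \zcref{rem:twist_form_is_intrinsic_size}.
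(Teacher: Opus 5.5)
Your proposal is correct and follows the paper's proof essentially step by step. Both arguments expand \(\nabla_i A^{ij}\), use \(\Gamma^i_{ik} = \partial_k \log\sqrt{\det g}\), and kill \(A^{ik}\Gamma^j_{ik}\) via \(A^{ik}\partial_i\partial_k s = 0\), which gives \(\alpha_A = \alpha - d\log\sqrt{\det g}\). The only cosmetic difference is how \(A^{ik}\partial_i\partial_k s = 0\) is obtained: the paper gets it directly from the normal frame \(N_k\) with \(\mathcal{B}(N_k) = D^2u^k\), whereas you argue that this vector is tangential and has vanishing first two components.
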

\begin{proof}
    See \zcref{appendix:sec:auxiliary_proofs_for_infinitesimal_deformations}.
\end{proof}

\subsubsection{An Application to the Twisted Local Model}\label{subsubsec:an_application_to_the_twisted_local_model}

Here we turn back to the twisted model
\begin{equation}\label{eq:subsubsec:an_application_to_the_twisted_local_model:1}
    s:\C \to \C^2, \ z \mapsto (z,z^2e^{i|z|^2}).
\end{equation}
Write
\begin{equation*}
    s(x,y) = (x,y,u(x,y),v(x,y)), \quad r^2 = x^2 + y^2,
\end{equation*}
and let \(A\) be a smooth generator of the one-dimensional subspace \(\spanset \{D^2u, D^2v\}^\perp \subset \Sym_2(\R)\), normalised so that \(\trace A = 2\). A computation yields
\begin{equation*}
    A(p,0) = \frac{1}{1+3p^4} \begin{pmatrix}
        1 - 2p^4 & p^6 - 3p^2\\
        p^6 - 3p^2 & 1 + 8p^4
    \end{pmatrix} = \begin{pmatrix}
        1 & -3p^2\\
        -3p^2 & 1
    \end{pmatrix} + O(p^4).
\end{equation*}
Using rotational symmetry and conjugating by the corresponding rotation in the domain, we obtain
\begin{equation*}
    A(x,y) = \begin{pmatrix}
        1 + 6xy & 3y^2-3x^2\\
        3y^2-3x^2 & 1-6xy
    \end{pmatrix} + O(r^4).
\end{equation*}
Moreover, we compute
\begin{equation*}
    \divg_0 A = 12 y \partial_x - 12x \partial_y + O(r^3) \quad \text{and} \quad \alpha = -12y dx + 12x dy + O(r^3).
\end{equation*}
It follows that
\begin{equation*}
    \Theta = (24 + O(r^2)) dx \wedge dy.
\end{equation*}
At the origin, \(D^2 u\) and \(D^2 v\) are linearly independent and \(A(0) = I_2\). Hence, after restricting to a sufficiently small disc, \(D^2 u\) and \(D^2 v\) remain pointwise linearly independent and \(\det A \neq 0\). Since \(\Theta = (24 + O(r^2)) dx \wedge dy\), \zcref{thm:twisting_form_locally_metric_minimising} applies, and \eqref{eq:subsubsec:an_application_to_the_twisted_local_model:1} is not locally metric-minimising in a small disc around the origin.

Of course, embedding this surface isometrically in \(\R^d\) for \(d \geq 4\) yields a saddle surface that is not locally metric-minimising either; see \zcref{appendix:lem:existence_not_locally_metric-minimising_saddle_surface_higher_dimension}.

\subsection{Infinitesimal Isometric Deformations}\label{subsec:infinitesimal_isometric_deformations}

In this subsection, we prove the existence of compactly supported infinitesimal isometric deformations of saddle immersions and establish the rank properties needed for the density theorem \zcref{introduction:thm:density_theorem}.

\begin{definition}\label{def:infinitesimal_isometric_deformation}
    Let \(\Omega \subset \R^2\) be an open set, let \(s:\Omega\to\R^d\) be a \(C^1\)-immersion, and let \(Y:\Omega\to\R^d\) be a vector field along \(s\). We call \(Y\) an \emph{infinitesimal isometric deformation} of \(s\) if
    \begin{equation*}
        \mathcal{E}(Y) = 0.
    \end{equation*}
\end{definition}

The main idea behind the proof of \zcref{introduction:thm:density_theorem} is the following observation. If \(Y\) is an infinitesimal isometric deformation of \(s\) and \(s_t:=s+tY\), then
\begin{equation*}
    (g_{s_t}-g_s)(X,X) = 2t\langle Ds(X),DY(X)\rangle+t^2|DY(X)|^2 = t^2|DY(X)|^2.
\end{equation*}
Thus, the induced metric changes only to second order in \(t\). 

\begin{lemma}\label{lem:compactly_supported_kernel}
    Let \(D \subset \Omega\) be an open disc and let \(A \in \Gamma(\Sym^2(TD))\) be smooth, nowhere vanishing, and nowhere indefinite. Then there exists a non-zero \(1\)-form \(\omega \in \Omega_c^1(D)\) such that
    \begin{equation*}
        L_A \omega = \langle A,\sym(\nabla\omega)\rangle_c = 0.
    \end{equation*}
    Moreover, the support of \(\omega\) may be chosen in an arbitrarily small disc contained in \(D\).
\end{lemma}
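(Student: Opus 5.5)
The plan is to look for $\omega$ inside a family parametrised by scalar functions, on which the single equation $L_A\omega=0$ becomes a transport equation along a compactly supported vector field with closed orbits. Fix a point of $D$ and a small disc $\D_0\Subset D$ around it. All constructions below are local and support-preserving, so the smallness of the support will come for free once $\omega$ is built inside $\D_0$.

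\emph{Step 1: pointwise normal form.} Since $A$ is nowhere vanishing and nowhere indefinite, after replacing $A$ by $-A$ if needed, $A_q$ is positive semidefinite and nonzero at every $q$. I would first treat the case where $A$ is positive definite on $\D_0$, which is the case relevant for saddle immersions. There $A$ defines a Riemannian metric $h$ with $h^\sharp=A^\sharp$. Exactly as in the proof of \zcref{lem:contraction_operator_surjective}, with $X:=A^\sharp\omega$ one has
\begin{equation*}
L_A\omega=\divg X+\alpha_A(X).
\end{equation*}
The degenerate (rank one) case needs the divergence identity computed directly in coordinates. I would flag it as a separate sub-case.

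\emph{Step 2: ansatz and transport equation.} Choose $\varphi\in C_c^\infty(\D_0)$ whose nonconstant level sets are closed curves, and set $Z:=J\nabla\varphi$, the Hamiltonian field of $\varphi$ for the $g$-area form. Then $\divg Z=0$ and $Z$ is tangent to the level curves of $\varphi$. Put $X:=fZ$. The equation becomes
\begin{equation*}
Z(f)+f\,\alpha_A(Z)=0,
\end{equation*}
which is an ODE along each orbit of $Z$. Its solution $f=\exp\bigl(-\int\alpha_A(Z)\,dt\bigr)$ is single-valued on a closed orbit $\gamma$ if and only if $\oint_\gamma\alpha_A=0$. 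By Stokes this condition reads $\int_{\mathrm{int}\,\gamma}\Theta=0$. On orbits where the condition fails we are forced to take $f\equiv 0$. Then $\omega:=(A^\sharp)^{-1}(fZ)$ is compactly supported, because $Z$ is. It is nonzero provided $f\not\equiv0$ on some orbit.

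\emph{Step 3, the main obstacle: the period condition.} We need some family of closed orbits on which $\oint\alpha_A=0$. If $\Theta$ has a fixed sign near the chosen point, simple closed orbits enclosing a region always violate this condition. So the ansatz $X=fZ$ with nested closed orbits must be modified. What I would try first is to use the gauge freedom of \zcref{rem:twist_form_is_intrinsic_size}: replace $\alpha_A$ by $\alpha_A-dk$ for a function $k$, which changes nothing on closed loops. I would then let the orbits degenerate: take $\varphi$ with a degenerate critical structure, such as a cell bounded by a homoclinic loop of a saddle point of $\varphi$, so that the transport equation is posed on orbits accumulating at zeros of $Z$ rather than on closed loops. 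On such orbits $f$ may be prescribed to vanish to infinite order near the zeros, with no periodicity constraint.

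If this fails, the fallback is to drop the rank-one ansatz. I would take $X=f_1Z_1+f_2Z_2$ for two Hamiltonian fields with different orbit structures. This gives one scalar equation in two unknown functions, i.e. an underdetermined first-order system, and I would solve it by prescribing $f_2$ and treating the $f_1$-equation as the transport problem. The freedom in $f_2$ is then used to make the inhomogeneous period integrals vanish orbit by orbit, and making this cancellation work is the step I expect to be the real difficulty. The degenerate case of Step 1 should be easier, because the kernel direction of $A$ gives a characteristic direction along which $\omega$ can be chosen directly.
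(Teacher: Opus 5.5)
Your Step 2 does work when $\Theta\equiv 0$ near the chosen point. There $\alpha_A=du$ locally, $f=e^{-u}$ solves $Z(f)+f\,\alpha_A(Z)=0$, and $*X^\flat=\pm e^{-u}\,d\varphi$, which is exactly the paper's solution $\eta=e^{-u}d\varphi$ of $d\eta+\alpha_A\wedge\eta=0$.

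The genuine gap is the case where $\Theta$ has a fixed sign (as for the twisted model). You leave it open, and neither route you suggest closes it.

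No single Hamiltonian field can work there, homoclinic loops included. Since $\{Z\neq 0\}=\{d\varphi\neq 0\}$ and critical values form a null set, almost every point of $\{Z\neq 0\}$ lies on a regular level set $\varphi^{-1}(c)$ with $c\neq 0$. Such a level set is a finite union of Jordan curves in $\D_0$, and on each of them $\oint\alpha_A=\pm\int_{\mathrm{int}}\Theta\neq 0$ forces $f=0$. Hence $X=fZ$ vanishes on a dense subset of $\{Z\neq 0\}$, so $X\equiv 0$. Separatrices form a null set and cannot carry a non-zero smooth $X$.

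Your fallback targets an obstacle that does not exist. For $Z_1(f_1)+f_1\,\alpha_A(Z_1)=b$ on a closed orbit with $\oint\alpha_A\neq 0$, the return map is affine with slope $e^{-\oint\alpha_A}\neq 1$. So a unique periodic solution always exists, and there are no period integrals to cancel. What would actually need control is the regularity of $f_1$ near centres of $\varphi_1$ (where $\oint\alpha_A\to 0$) and near separatrices, and you do not address this.

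The missing idea is that a non-vanishing twist makes the problem easy rather than hard. By \zcref{lem:contraction_operator_surjective}, on a rectangle $I\times J$ where $\Theta=\kappa\,dVol_g$ with $\kappa\neq 0$, the operator $L_A$ has a first-order differential right inverse $R$. Explicitly, $R(F)=\omega$, where $X=A^\sharp\omega$ satisfies $*X^\flat=d(F/\kappa)+(F/\kappa)\,\alpha_A$. The paper then proceeds in three steps:
\begin{itemize}
\item Take any nowhere-vanishing local solution $\tilde\omega$. Writing $\alpha_A=a\,dx+b\,dy$ and $*X^\flat=p\,dx$, the equation reduces to the ODE $\partial_y p+bp=0$.
\item Take a cut-off $\chi$ with $\chi\equiv 1$ on a small disc $V$, and set $\omega:=\chi\tilde\omega-R\bigl(L_A(\chi\tilde\omega)\bigr)$.
\item Since $L_A(\chi\tilde\omega)$ is supported in $\supp d\chi$ and $R$ does not enlarge supports, $\omega$ is compactly supported, satisfies $L_A\omega=0$, and equals $\tilde\omega\neq 0$ on $V$.
\end{itemize}

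Finally, the rank-one case is only asserted in your proposal. It is part of the statement, since nowhere indefinite allows rank one, and \zcref{prop:existence_infinitesimal_deformation} assumes only saddle, not strictly saddle. There the paper writes $A=\lambda\,e\otimes e$ with $e$ a unit field, completes it to an orthonormal frame $(e,e')$ with dual coframe $(\theta,\theta')$, and sets $\nabla_e e=\mu e'$. Then $L_A(f\theta+h\theta')=\lambda\bigl(e(f)-\mu h\bigr)$. If $\mu\equiv 0$, one takes $f=0$ and $h\in C_c^\infty$ arbitrary. Otherwise one takes $f\in C_c^\infty$ arbitrary and $h=e(f)/\mu$ on a disc where $\mu\neq 0$.
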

\begin{proof}
    See \zcref{appendix:sec:auxiliary_proofs_for_infinitesimal_deformations}.
\end{proof}

\begin{proposition}\label{prop:existence_infinitesimal_deformation}
    Let \(s:\Omega \to \R^d\), \(d \geq 4\), be a smooth saddle immersion and let \(U \subset \Omega\) be a non-empty open set. Then \(s\) admits a non-trivial compactly supported infinitesimal isometric deformation \(Y \in \Gamma_c(s^*(T\R^d)\restrict{U}\). Its support may be chosen inside an arbitrarily small disc contained in \(U\).
\end{proposition}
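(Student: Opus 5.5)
We look for \(Y = Ds(V) + \nu\) and use formula \eqref{eq:sec:infinitesimal_deformations:1}. Then \(\mathcal{E}(Y)=0\) is equivalent to
\[
\mathcal{B}(\nu) = \sym(\nabla V^\flat).
\]
The plan has two parts. First find a compactly supported \(1\)-form \(\omega\) such that \(\sym(\nabla\omega)\) takes values in \(\Ima\mathcal{B}\) pointwise. Then solve algebraically for \(\nu\) with \(\mathcal{B}(\nu)=\sym(\nabla\omega)\), and set \(V=\omega^\sharp\).

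\textbf{Reduction to a disc with \(\mathcal{B}\) of constant rank.} The rank of \(\mathcal{B}\) is lower semicontinuous, so we pick a small disc \(D\subset U\) on which \(\rank\mathcal{B}\) equals its maximum value \(k\) over \(U\). There are three cases.

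\emph{Case \(k=3\).} Here \(\mathcal{B}\) is onto \(\Sym^2(T^*D)\). Take any nonzero \(\omega\in\Omega^1_c(D)\) with \(\sym\nabla\omega\not\equiv0\). Set \(\nu:=(\mathcal{B}|_{\ker^\perp})^{-1}(\sym\nabla\omega)\); this is smooth and compactly supported, and \(Y\neq0\).

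\emph{Case \(k\le 2\).} On \(D\) the set \(\ker\mathcal{B}\) is a smooth subbundle of rank \(d-2-k\ge 2-k\). If \(k\le1\), choose any nonzero compactly supported section \(\nu\) of \(\ker\mathcal{B}\), which exists since \(d\ge4\), and take \(V=0\). Then \(\mathcal{E}(Y)=-\mathcal{B}(\nu)=0\) and \(Y\neq0\).

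\emph{Case \(k=2\).} This is the main case. As in the discussion before Definition~\ref{def:twist_form}, \(\Ima\mathcal{B}=\ker\langle A,\cdot\rangle_c\) for a nowhere vanishing section \(A\) of \(\mathcal{A}\). The condition on \(\omega\) becomes exactly \(L_A\omega=0\).

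\textbf{Using the saddle property to control \(A\).} Lemma~\ref{lem:compactly_supported_kernel} requires \(A\) to be nowhere indefinite, and this is the key step. The claim is that if \(A_p\) were indefinite, then its annihilator, the \(2\)-plane \(\Ima\mathcal{B}_p\), would contain a definite form. To see this, diagonalise \(A_p=\mathrm{diag}(a,-b)\) with \(a,b>0\) in an orthonormal frame for \(g\). Then the form \(H=\mathrm{diag}(b,a)\) is positive definite and satisfies \(\langle A_p,H\rangle_c=ab-ba=0\). So \(H=II^\nu_p\) for some \(\nu\), and Proposition~\ref{prop:saddle_second_fundamental_form}(b) contradicts saddleness at \(p\). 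Hence \(A\) is nowhere indefinite, i.e.\ semidefinite, and nonzero. Lemma~\ref{lem:compactly_supported_kernel} then gives a nonzero \(\omega\in\Omega^1_c(D')\) with \(L_A\omega=0\), where \(D'\) is an arbitrarily small disc in \(D\).

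Now set \(S:=\sym\nabla\omega\in\Gamma_c(\Ima\mathcal{B})\) and \(\nu:=(\mathcal{B}|_{\ker(\mathcal{B})^\perp})^{-1}(S)\), exactly as in the proof of Lemma~\ref{lem:right_inverse_operator}. Then \(Y=Ds(\omega^\sharp)+\nu\) is smooth, supported in \(D'\), satisfies \(\mathcal{E}(Y)=0\), and is nonzero because \(\omega\neq0\) and \(Ds\) is injective.

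\textbf{Expected obstacle.} The analytic difficulty sits inside Lemma~\ref{lem:compactly_supported_kernel}, which is taken as given. In the present argument the delicate point is the linear-algebra step converting the saddle property into semidefiniteness of \(A\). Degenerate semidefinite \(A\) is allowed by that lemma, so no further genericity is needed.

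The support smallness holds in every case, since \(D\) and \(D'\) can be shrunk freely. The same applies in the other cases: choose \(\omega\) or \(\nu\) supported in a tiny disc.
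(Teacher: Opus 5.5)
Your proposal is correct and follows the paper's proof essentially step for step: you pass to a disc where \(\rank\mathcal{B}\) is constant, use a cut-off section of \(\ker\mathcal{B}\) when the rank is at most one, and in the rank-two case you use the saddle property to show \(A\) is nowhere indefinite, apply Lemma~\ref{lem:compactly_supported_kernel}, and recover \(\nu\) from \(\sym(\nabla\omega)\) via the isomorphism \(\mathcal{B}|_{\ker(\mathcal{B})^\perp}\). The only differences are cosmetic: you treat the rank-three case directly rather than excluding it (it cannot occur, since a surjective \(\mathcal{B}_p\) would produce a definite \(II^\nu_p\)), and you supply the explicit diagonalisation \(A_p=\mathrm{diag}(a,-b)\), \(H=\mathrm{diag}(b,a)\), which the paper leaves implicit.
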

\begin{proof}
    Choose an arbitrarily small open disc \(B \Subset U\). Since \(s\) is saddle, we have
    \begin{equation*}
        \rank \mathcal{B}\restrict{B} \leq 2.
    \end{equation*}
    Let
    \begin{equation*}
        r := \max_{q \in B} \rank \mathcal{B}_q \in \{0,1,2\}
    \end{equation*}
    and choose \(p \in B\) with \(\rank \mathcal{B}_p = r\). If \(r = 0\), then \(\mathcal{B}\) has constant rank zero on \(B\). If \(r > 0\), choose a non-zero \(r \times r\)-minor of  \(\mathcal{B}_p\). By continuity, this minor remains non-zero on a neighbourhood \(W \subset B\) of \(p\), hence \(\rank \mathcal{B}\restrict{W} \geq r\). By maximality of \(r\) on \(B\), the reverse inequality also holds, and therefore \(\rank \mathcal{B} \restrict{W} = r\). After choosing a closed disc \(\D \subset W\), we may therefore assume that
    \begin{equation*}
        \rank \mathcal{B}\restrict{\D} = r \in \{0,1,2\}.
    \end{equation*}
    
    \emph{Case 1}: \(r \leq 1\). Since the normal bundle has rank \(d-2\), we have
    \begin{equation*}
        \rank \ker (\mathcal{B}\restrict{\D}) = d - 2 - r \geq 1.
    \end{equation*}
    After shrinking \(\D\) if necessary, choose a smooth nowhere vanishing normal section \(\nu \in \Gamma(\ker \mathcal{B} \restrict{\D})\), pick \(0 \neq \varphi \in C_c^\infty(D)\), and set
    \begin{equation*}
        Y := \varphi \nu.
    \end{equation*}
    Since \(Y\) is normal, we find
    \begin{equation*}
        \mathcal{E}(Y) = -\mathcal{B}(\varphi \nu) = -\varphi \mathcal{B}(\nu) = 0.
    \end{equation*}
    Thus \(Y\) is a non-trivial compactly supported infinitesimal isometric deformation.

    \emph{Case 2:} \(r = 2\). Then
    \begin{equation*}
        \Ima(\mathcal{B} \restrict{\D}) \subset \Sym^2(T^*\Omega\restrict{\D})
    \end{equation*}
    is a rank-two subbundle. Hence \(\mathcal{A}\restrict{\D} \subset \Sym^2(T\Omega)\restrict{\D}\) is a line bundle. After shrinking \(\D\), choose a nowhere vanishing section \(A \in \Gamma(\mathcal{A}\restrict{\D})\). We claim that \(A\) is nowhere indefinite. Indeed, if \(A_p\) were indefinite at some \(p \in \D\), then
    \begin{equation*}
        \{H \in \Sym^2(T_p^*\Omega) : \langle A_p, H\rangle_c = 0\} = \Ima \mathcal{B}_p
    \end{equation*}
    would contain a definite bilinear form. Hence there would exist a normal vector \(\nu\) such that \(II_p^\nu\) is definite, contradicting the saddle property. Thus \(A\) is nowhere indefinite. By \zcref{lem:compactly_supported_kernel}, there exists \(0 \neq \omega \in \Omega_c^1(D)\) such that
    \begin{equation*}
        L_A \omega = 0.
    \end{equation*}
    Since \(A\) generates the line bundle \(\mathcal{A}\restrict{\D}\), it follows that \(\sym(\nabla \omega) \in \Ima(\mathcal B\restrict{\D})\). With respect to the Euclidean metric, consider \((\ker (\mathcal{B})\restrict{\D})^\perp\). The restriction
    \begin{equation*}
        \mathcal{B}\restrict{(\ker (\mathcal{B})\restrict{\D})^\perp} : (\ker (\mathcal{B})\restrict{\D})^\perp \to \Ima(\mathcal{B} \restrict{\D})
    \end{equation*}
    is a vector bundle isomorphism. Hence there exists a unique smooth normal section \(\nu \in \Gamma((\ker (\mathcal{B})\restrict{\D})^\perp)\) satisfying \(\mathcal{B}(\nu) = \sym(\nabla \omega)\). Set \( V := \omega^\sharp\) and \(Y := Ds(V) + \nu\). Then
    \begin{equation*}
        \mathcal{E}(Y) = \sym(\nabla V^\flat) - \mathcal{B}(\nu) = \sym(\nabla \omega) - \sym(\nabla \omega) = 0.
    \end{equation*}
    Moreover, \(Y \neq 0\). Otherwise, its tangential and normal parts vanish, so \(V = 0\), hence \(\omega = 0\), a contradiction. Since \(\omega\) is compactly supported in \(\D\) and the inverse bundle map above is fibrewise linear, \(Y\) is compactly supported in \(D\).
\end{proof}

\subsubsection{Rank Properties}\label{subsubsec:rank_properties}

Let \(s : \Omega \to \R^d\) be a smooth immersion. For a smooth normal section \(\nu \in \Gamma(N_s \Omega)\), recall that the \emph{Weingarten map} \(W_\nu : T\Omega \to T\Omega\) is defined by
\begin{equation*}
    g(W_\nu X,Z) = II^\nu(X,Z) \quad \text{for all} \ X,Z \in \Gamma(T\Omega).
\end{equation*}
Equivalently,
\begin{equation*}
    Ds(W_\nu X) = - (D_X\nu)^\top.
\end{equation*}

\begin{lemma}\label{lem:rank_0-1_dense}
    Let \(s : \Omega \to \R^d\), \(d \geq 4\), be a smooth strictly saddle immersion, and let \(Y \in \Gamma(s^*(T\R^d))\) be an infinitesimal isometric deformation of \(s\). Then
    \begin{equation*}
        \mathcal{G} := \intset \{p \in \Omega : (DY)_p = 0\} \cup \{p \in \Omega : \rank (DY)_p = 2\}
    \end{equation*}
    is open and dense in \(\Omega\). If \(\Omega\) is connected and \(0 \neq Y \in \Gamma_c(s^*(T\R^d))\), then
    \begin{equation*}
        \{p \in \Omega: \rank (DY)_p = 2\} \neq \emptyset.
    \end{equation*}
\end{lemma}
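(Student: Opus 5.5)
The proof rests on one pointwise fact: at every point, \((DY)_p\) has rank \(0\) or \(2\), never exactly \(1\). Granting this, \(\{p : (DY)_p = 0\}\) and \(\{p : \rank(DY)_p = 2\}\) cover \(\Omega\). The second set is open, since some \(2\times 2\) minor is non-zero there and this persists by continuity. The first set is closed, so its complement is the open rank-two set. Hence \(\Omega\) is the union of the closed set \(Z := \{DY = 0\}\) and its open complement. The union \(\intset Z \cup (\Omega\setminus Z)\) is open, and its complement is \(\partial Z\), the boundary of a closed set, which has empty interior. Therefore \(\mathcal{G}\) is open and dense.

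To prove the pointwise fact, fix \(p\) and set \(M := (DY)_p : T_p\Omega \to \R^d\). The isometric condition \(\mathcal{E}(Y)=0\) says \(\langle Ds_p X, MX\rangle = 0\) for all \(X\). Thus the tangential part \(P := (Ds_p)^{-1}(M)^\top\) is \(g\)-skew: \(P = cJ\), where \(J\) is rotation by \(\pi/2\) in \(g_p\) and \(c \in \R\). The hard part is to show that if \(M \neq 0\), then \(M\) is injective.

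Suppose, for contradiction, that \(MX_0 = 0\) for some \(X_0 \neq 0\) while \(M \neq 0\).
\begin{itemize}
\item Since \(PX_0 = cJX_0 = 0\), we get \(c = 0\), so \(M\) is purely normal with values in \((N_s\Omega)_p\).
\item Now differentiate once more. Along \(\Omega\), \(Z \mapsto \langle D_X Ds(Z), D_{X'}Y\rangle\) has a symmetric structure coming from the integrability condition: differentiating \(\langle Ds(X),DY(Z)\rangle + \langle Ds(Z), DY(X)\rangle = 0\) and using \(D^2Y\) symmetric gives the standard identity
\[
\langle D^2 s(X,Z), DY(W)\rangle = \langle D^2 s(X,W), DY(Z)\rangle ,
\]
up to terms involving \(\langle Ds, D^2Y\rangle\). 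The usual derivation of the Christoffel-type formula eliminates these terms. With \(M\) normal, this reads \(II^{MW}_p(X,Z) = II^{MZ}_p(X,W)\).
\item Take \(\nu := MX_1 \neq 0\), where \(\{X_0,X_1\}\) is a basis. Putting \(W = X_1\), \(Z = X_0\) yields \(II^{\nu}_p(X,X_0) = II^{MX_0}_p(X,X_1) = 0\) for all \(X\).
\item So \(X_0\) lies in the kernel of \(II^\nu_p\), and \(II^\nu_p\) is degenerate, hence semidefinite.
\end{itemize}
This contradicts strict saddleness, which requires every non-zero \(II^\nu_p\) to be indefinite. I expect justifying the displayed identity carefully to be the main technical step: one differentiates \(\mathcal{E}(Y)=0\) in coordinates and takes cyclic sums, as in the proof of uniqueness of Christoffel symbols.

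For the second claim, let \(\Omega\) be connected and \(0 \neq Y\) compactly supported, and suppose the rank-two set is empty. Then \(DY \equiv 0\), so \(Y\) is constant on \(\Omega\). Compact support forces this constant to be \(0\) (if \(\Omega\) is compact, \(\supp Y = \Omega\) is allowed, but a compactly supported section on an open non-compact \(\Omega\subset\R^2\) vanishes somewhere). This gives \(Y = 0\), a contradiction.
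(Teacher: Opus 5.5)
\textbf{There is a genuine gap.} Your proof rests on the pointwise claim that \((DY)_p\) never has rank exactly one, and that claim is false. Take the quadratic model \(s(x,y)=(x,y,x^2-y^2,2xy)\), which is strictly saddle everywhere. Let \(\Lambda(a_1,a_2,a_3,a_4)=(0,-a_4,0,a_2)\) be the infinitesimal rotation of the \((e_2,e_4)\)-plane. Then \(Y:=\Lambda s\) satisfies \(\mathcal{E}(Y)(X,X)=\langle Ds(X),\Lambda Ds(X)\rangle=0\). But at \(p=(x_0,0)\) you get \(DY_p(\partial_x)=\Lambda(1,0,2x_0,0)=0\) and \(DY_p(\partial_y)=(0,-2x_0,0,1)\neq 0\), so the rank is one along the whole real axis.

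The step that breaks is your displayed identity. The cyclic-sum manipulation of \(\partial_k(\langle s_i,Y_j\rangle+\langle s_j,Y_i\rangle)=0\) does not eliminate the second derivatives of \(Y\). It only yields \(\langle D^2s(X,Z),DY(W)\rangle=-\langle Ds(W),D^2Y(X,Z)\rangle\). Write the tangential part of \(DY\) as \(cJ\) everywhere, with \(c\) a smooth function, and compute the tangential part of \(D^2Y\) at a point where \(c=0\). You then get
\[
II^{MW}(X,Z)-II^{MZ}(X,W)=-dc(X)\,g(W,JZ).
\]
So a term involving \(dc_p\) survives, and knowing \(c(p)=0\) at a single point gives no control over it. In the example \(c=-2y/(1+4x^2+4y^2)\), so \(\partial_y c\neq 0\) on the real axis. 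Indeed \(II^{\nu}(\partial_x,\partial_y)=\langle s_{xy},\nu\rangle=2\neq 0\) for \(\nu=DY_p(\partial_y)\), although \(\partial_x\) spans \(\ker DY_p\).

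What is true, and what the paper proves, is that \(DY\) cannot have rank one on a non-empty \emph{open} set. Your computation gives this once corrected: on such a set \(c\equiv 0\), hence \(dc=0\), the extra term drops out, and \(II^{\nu}_p(\cdot,X_0)=0\) contradicts strict saddleness. The paper argues instead via the Rank Theorem: locally \(Y(x,y)=\gamma(x)\), so \(\nu=Y_x\) is a normal field with \(\nu_y=0\), and the Weingarten equation gives \(W_\nu\partial_y=0\).

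Since rank-one points can occur, \(\{DY=0\}\) and the rank-two set need not cover \(\Omega\), so your closed-set/complement argument must be replaced. In any non-empty open set, pick a point where \(\rank DY\) is maximal on that set. The rank is then constant near that point, hence \(0\) or \(2\), which gives density of \(\mathcal G\); openness is clear since both sets in the union are open. Likewise, in the second assertion, an empty rank-two set means \(DY=0\) on the dense set \(\mathcal G=\intset\{DY=0\}\), hence everywhere by continuity.
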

\begin{proof}
    See \zcref{appendix:sec:auxiliary_proofs_for_infinitesimal_deformations}.
\end{proof}

\begin{example}\label{ex:rank_1_and_rank_2_infinitesimal_isometric_deformations}
  Let \(d \geq 4\) and consider the plane
  \begin{equation*}
      s : \R^2 \to \R^d, \ s(x_1, x_2) = (x_1, x_2, 0, \dots, 0).
  \end{equation*}
  This immersion is saddle but not strictly saddle. Its normal bundle is the bundle
  \begin{equation*}
      (N_s \R^2)_p = \spanset \{e_3, \dots, e_d\} \quad \text{for all} \ p \in \R^2,
  \end{equation*}
  and \(\mathcal B = 0\). Let \(D \subset \R^2\) be a disc and \(F=(f_3, \dots, f_d) \in C_c^\infty(D, \R^{d-2})\). We define the compactly supported normal section
  \begin{equation*}
      Y := \sum_{i = 3}^d f_i e_i.
  \end{equation*}
  Since \(Y\) is normal and \(\mathcal{B} = 0\), we find \(\mathcal{E}(Y) = - \mathcal{B}(Y) = 0\). Hence \(Y\) is an infinitesimal isometric deformation of \(s\). Since the normal frame \((e_3, \dots, e_d)\) is constant,
  \begin{equation*}
      DY_p(X) = \sum_{i = 3}^d Df_i(X)e_i \quad \text{for} \ X \in T_p\R^2.
  \end{equation*}
  Consequently, \(\rank (DY)_p = \rank (DF)_p\). In particular, for suitable choices of \(F\), the differential \(DY\) may have rank \(1\) or \(2\).
\end{example}

We conclude this section with a geometric consequence concerning the absence of rank-two infinitesimal isometric deformations.

\begin{lemma}\label{lem:existence_rank_2_infinitesimal_isometric_deformation}
    Let \(s : \Omega \to \R^4\) be a smooth saddle immersion and let \(U \subset \Omega\) be a non-empty open set on which \(\rank \mathcal B_s = 2\). Then \(s\) admits a compactly supported infinitesimal isometric deformation \(Y\) in \(U\) such that
    \begin{equation*}
        \rank (DY)_p = 2 \quad \text{for some} \ p \in U.
    \end{equation*}
\end{lemma}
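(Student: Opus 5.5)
Our plan is to combine the construction of Case 2 in the proof of \zcref{prop:existence_infinitesimal_deformation} with the dichotomy of \zcref{lem:rank_0-1_dense}. Since \(d = 4\), the normal bundle has rank \(2\). Because \(\rank \mathcal{B} = 2\) on \(U\), the map \(\mathcal{B}_q\) is therefore injective for every \(q \in U\). Hence \(\ker \mathcal{B} = 0\), and the inverse \(\mathcal{B}^{-1} : \Ima \mathcal{B} \to N_s\Omega\) is a smooth bundle isomorphism over \(U\).

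We choose a small closed disc \(\D \subset U\) and a nowhere vanishing section \(A\) of \(\mathcal{A}\) on \(\D\). As in Case 2, \(A\) is nowhere indefinite by the saddle property. \zcref{lem:compactly_supported_kernel} then yields \(0 \neq \omega \in \Omega^1_c(D)\) with \(L_A\omega = 0\). We set
\begin{equation*}
    Y := Ds(\omega^\sharp) + \mathcal{B}^{-1}(\sym \nabla\omega).
\end{equation*}
This is a non-trivial, compactly supported infinitesimal isometric deformation.

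The key step is to show that \((DY)_p\) has rank \(2\) somewhere. \zcref{lem:rank_0-1_dense} gives this directly only when \(s\) is strictly saddle. So we first try to reduce to that case. If \(\rank\mathcal{B} = 2\) and \(\ker\mathcal{B} = 0\), then every \(II^\nu\) with \(\nu \neq 0\) is non-zero and lies in \(\ker\langle A,\cdot\rangle_c\). If \(A_q\) is definite, this plane consists only of indefinite forms, so \(s\) is strictly saddle at \(q\). If \(A_q\) is degenerate (semidefinite of rank one), the plane contains a non-zero semidefinite form, and \(s\) may fail to be strictly saddle at \(q\).

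We split into two cases. \emph{Case (a):} \(A\) is definite at some point of \(U\). Then \(A\) is definite on an open disc, and \(s\) is strictly saddle there. Applying \zcref{lem:rank_0-1_dense} on that (connected) disc to the \(Y\) built above gives a point with \(\rank(DY)_p = 2\). \emph{Case (b):} \(A\) is degenerate everywhere on \(U\). Then \(A = \pm\, \xi\otimes\xi\) for a local nowhere vanishing vector field \(\xi\), and \(L_A\omega = (\nabla\omega)(\xi,\xi)\) up to sign. This equation is degenerate along \(\xi\), so it has a large space of compactly supported solutions; for instance, \(\omega = \varphi\,\eta\) where \(\eta\) is a suitably adapted \(1\)-form and \(\varphi\) is free transversally. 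Here we would instead compute \(DY\) directly. The tangential part of \(DY(X)\) is
\begin{equation*}
    Ds(\nabla_X V) - Ds(W_\nu X),
\end{equation*}
which is antisymmetric in the sense of \(g\) because \(\mathcal{E}(Y) = 0\). Hence it equals \(\mu J X\) for a function \(\mu\), where \(J\) is rotation by \(\pi/2\). If \(\mu \neq 0\) at some point, this tangential part already has rank \(2\). Since \(\mu\) is essentially \(\tfrac12 d\omega\) corrected by a zeroth-order term in \(\nu\), we would choose \(\omega\) with \(d\omega \neq 0\) at a point where this correction does not cancel it. The freedom in the degenerate case should allow this.

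We expect the main obstacle to be the degenerate case, and more precisely checking that in case (a) the proof of \zcref{lem:rank_0-1_dense} really only needs strict saddleness on a neighbourhood of the support, so that it may be applied locally. As a fallback covering both cases, we would use the \(\mu\)-argument throughout. Suppose \(\rank (DY) \leq 1\) everywhere. Then \(\mu \equiv 0\), so \(\nabla V^\flat\) is symmetric and \(\omega\) is closed, hence exact: \(\omega = df\) with \(f \in C^\infty_c(D)\). The condition \(L_A\omega = 0\) then reads
\begin{equation*}
    \langle A, \nabla^2 f\rangle_c = 0.
\end{equation*}
We must also use the normal part of \(DY\) having rank at most \(1\). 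Together these should force an overdetermined system on \(f\), whose only compactly supported solution is \(f = 0\) by unique continuation. We therefore pick \(\omega\) non-closed from the start, which \zcref{lem:compactly_supported_kernel}'s construction should permit by adding a suitable non-exact solution.
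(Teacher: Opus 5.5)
\textbf{There is a genuine gap, in case (b).} Your case (a) is the paper's own argument, provided you build \(Y\) only after shrinking to the disc on which \(A\) is definite (\zcref{lem:compactly_supported_kernel} allows this). You then apply \zcref{lem:rank_0-1_dense} to \(s\) restricted to that disc, which settles your locality worry. Case (b) is exactly where new work is needed, because \(s\) need not be strictly saddle there, and you never produce an \(\omega\) and a point with \(\rank(DY)_p = 2\).

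Your tangential mechanism is sound, but one ingredient is misstated and the other is only hoped for. First, there is no zeroth-order correction in \(\nu\). Put \(TX := \nabla_X V - W_\nu X\) with \(V=\omega^\sharp\). Since \(W_\nu\) is \(g\)-self-adjoint and \(II^\nu = \mathcal{B}(\nu) = \sym(\nabla\omega)\), you get
\[
g(TX,Z) = (\nabla_X\omega)(Z) - \sym(\nabla\omega)(X,Z) = \tfrac12\, d\omega(X,Z).
\]
So your \(\mu\) is \(\pm\tfrac12 (*d\omega)\), and \(\rank(DY)_p = 2\) whenever \((d\omega)_p \neq 0\). Second, you still need a compactly supported solution of \(L_A\omega = 0\) with \(d\omega\not\equiv 0\) when \(A\) has rank one. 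Saying the freedom ``should allow this'' is not a proof. Your fallback via an overdetermined system and unique continuation is not an argument either: \(\langle A,\nabla^2 f\rangle_c = 0\) is degenerate for rank-one \(A\), and you never say what the normal-part condition adds.

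\textbf{How to close it.} Write \(A = \lambda\,\xi\otimes\xi\) with \(\xi\) a unit field. Complete it to an orthonormal frame \((\xi,\zeta)\) with coframe \((\theta,\eta)\), and write \(\nabla_\xi\xi = \kappa\zeta\). Then \(L_A(f\theta + h\eta) = \lambda(\xi f - \kappa h)\).
\begin{itemize}
\item If \(\kappa\equiv 0\) near a point \(q\), take \(\omega = h\eta\) with \(h(q)=0\) and \(dh_q = \theta_q\). Then \((d\omega)_q = \theta_q\wedge\eta_q\neq 0\).
\item If \(\kappa(q)\neq 0\), take \(f\) supported where \(\kappa\neq0\), with vanishing \(1\)-jet at \(q\) and \(\nabla^2 f_q(\xi,\xi)\neq 0\), and set \(h := \xi f/\kappa\). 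Then \((d\omega)_q = \kappa(q)^{-1}\, d(\xi f)_q\wedge\eta_q\). This is non-zero because \(d(\xi f)_q(\xi_q) = \nabla^2 f_q(\xi,\xi)\neq 0\) while \(\eta_q(\xi_q)=0\).
\end{itemize}
The same criterion also covers the definite case. A closed compactly supported solution on a disc is \(df\) with \(f\in C_c^\infty\) solving an elliptic equation without zeroth-order term, so \(f=0\) by the maximum principle. The completed tangential argument therefore treats both cases uniformly and makes the strict-saddle reduction unnecessary.

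\textbf{Comparison with the paper.} The paper uses the normal part of \(DY\) instead. It takes \(\omega\) from \zcref{appendix:lem:rank_one_case} with \(\omega_p=0\), \(S_p=0\) and \(\rank(\nabla S)_p=2\), where \(S=\sym(\nabla\omega)\). Injectivity of \(\mathcal{B}_p\) gives \(\nu_p = 0\) and \((\nabla_X S)_p = \mathcal{B}_p((\nabla^\perp_X\nu)_p)\). Since \(V_p=0\), the normal part of \(DY_p\) is \((\nabla^\perp\nu)_p\), which therefore has rank two. That route needs finer jet information on \(\omega\) at \(p\); yours, once completed, needs only one non-closed solution.
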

\begin{proof}
    See \zcref{appendix:sec:auxiliary_proofs_for_infinitesimal_deformations}.
\end{proof}

\begin{proposition}\label{prop:surface_in_a_hyperplane}
   Let \(s : \Omega \to \R^4\) be a smooth saddle immersion and let \(U \subset \Omega\) be an open connected subset. Assume that there is no compactly supported infinitesimal isometric deformation \(Y \in \Gamma_c(s^*(T\R^4)\restrict{U})\) such that
   \begin{equation*}
       \rank (DY)_p = 2 \quad \text{at some} \ p \in U.
   \end{equation*}
   Then
   \begin{equation*}
       \mathcal{V} := \{p \in U : \rank (\mathcal{B}_p) = 1\}
   \end{equation*}
   is open and dense in \(U\). Moreover, every connected component of \(\mathcal{V}\) is mapped by \(s\) into an affine hyperplane of \(\R^4\).
\end{proposition}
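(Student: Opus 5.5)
We argue by ruling out the ranks \(2\) and \(0\) on open subsets of \(U\) and then integrating the rank-one condition.

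\textbf{Openness and density of \(\mathcal V\).} Since \(s\) is saddle, Proposition~\ref{prop:saddle_second_fundamental_form}(b) forbids definite scalar second fundamental forms. As in the proof of Proposition~\ref{prop:existence_infinitesimal_deformation}, this forces \(\rank\mathcal B\le 2\) everywhere; for \(d=4\) it is automatic, since the normal bundle has rank \(2\). By lower semicontinuity of rank, \(\{\rank\mathcal B=2\}\) is open. If it were non-empty, Lemma~\ref{lem:existence_rank_2_infinitesimal_isometric_deformation} would produce a compactly supported \(Y\) in it with \(\rank(DY)_p=2\), contradicting the hypothesis. Hence \(\rank\mathcal B\le1\) on \(U\), and \(\mathcal V=\{\rank\mathcal B\ge 1\}\) is open.

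For density, suppose there were an open disc \(W\subset U\) with \(\mathcal B\equiv 0\) on \(W\). Then every normal field is in the kernel of \(\mathcal B\), and as in Example~\ref{ex:rank_1_and_rank_2_infinitesimal_isometric_deformations} we want \(Y=f_1\nu_1+f_2\nu_2\) with a normal frame \((\nu_1,\nu_2)\). Here \(\mathcal E(Y)=0\) automatically, and \(DY=\sum df_i\,\nu_i+\sum f_i D\nu_i\). The tangential part of \(D\nu_i\) vanishes because \(W_{\nu_i}=0\), but normal connection terms remain. Two ways to handle them: choose \(F=(f_1,f_2)\) with \(DF\) of rank \(2\) at a point \(p\) where \(F(p)=0\), so that \((DY)_p\) has rank \(2\) regardless of the connection terms; or note that \(II\equiv0\) on \(W\) means \(s(W)\) is a piece of an affine plane, so constant frames exist. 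Either way we contradict the hypothesis, so \(\{\rank\mathcal B=0\}\) has empty interior and \(\mathcal V\) is dense.

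\textbf{Hyperplane property.} On a component \(\mathcal V_0\) of \(\mathcal V\), \(\ker\mathcal B\) is a smooth normal line bundle. Locally choose a unit section \(\nu\) spanning it, so \(II^\nu\equiv0\), and let \(\mu\) be a unit normal with \(\mu\perp\nu\). The goal is to show \(D\nu=0\). The tangential part of \(D\nu\) is \(-Ds(W_\nu\cdot)=0\), so what remains is the normal connection form \(\beta(X)=\langle D_X\nu,\mu\rangle\), and we must show \(\beta\equiv0\).

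For this, consider the normal deformations \(Y=\varphi\nu\) with \(\varphi\in C_c^\infty\). These satisfy \(\mathcal E(Y)=-\varphi\mathcal B(\nu)=0\), and
\[
DY(X)=d\varphi(X)\,\nu+\varphi\,\beta(X)\,\mu .
\]
At a point \(p\) with \(\beta_p\neq0\), pick \(\varphi\) with \(\varphi(p)\neq0\) and \(d\varphi_p\) linearly independent of \(\beta_p\). Then \((DY)_p\) has rank \(2\), contradicting the hypothesis. Hence \(\beta\equiv0\) and \(\nu\) is locally constant.

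Since \(\mathcal V_0\) is connected and \(\nu\) is defined up to sign, \(\nu\) is a constant vector \(n\) on \(\mathcal V_0\) up to sign. Then \(d\langle s,n\rangle=\langle Ds,n\rangle=0\) because \(n\) is normal, so \(\langle s,n\rangle\) is constant on \(\mathcal V_0\) and \(s(\mathcal V_0)\) lies in an affine hyperplane.

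\textbf{Expected obstacle.} The \(\beta\neq0\) argument above appears clean. The more delicate step is the rank-zero case, where the cutoff must be chosen so that the normal connection terms cannot spoil rank \(2\); evaluating at a zero of \(F\) with \(\rank DF_p=2\) handles this.
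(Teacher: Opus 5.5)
Your proposal is correct and follows the paper's proof essentially step by step. It rules out rank two via the rank-two deformation lemma, and rules out rank zero on a disc with \(Y=\varphi N_1+\psi N_2\), where \(\varphi(p)=\psi(p)=0\) and \(D\varphi_p, D\psi_p\) are independent; it then shows \(D\nu=0\) using the deformations \(Y=\varphi\nu\). The only difference is cosmetic: you use \(D_X\nu=\beta(X)\mu\), which tacitly relies on \(\langle D_X\nu,\nu\rangle=0\) from \(|\nu|=1\) (the paper states this explicitly).
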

\begin{proof}
    Since \(s\) is saddle, we have 
    \begin{equation*}
        \rank \mathcal{B}\restrict{U} \leq 2,
    \end{equation*}
    since otherwise \(\Ima \mathcal{B}_p = \Sym^2(T_p^*\Omega)\), which contains definite forms. If \(\rank \mathcal{B}_p = 2\) at some \(p \in U\), then, after restricting \(U\) to a small neighbourhood of \(p\), \(\rank \mathcal{B}\restrict{U} = 2\). Therefore \zcref{lem:existence_rank_2_infinitesimal_isometric_deformation} yields an infinitesimal isometric deformation \(Y\) with \(\rank (DY)_p\) at some \(p \in U\) contradicting the hypothesis. Hence
    \begin{equation*}
        \rank \mathcal{B}\restrict{U} \leq 1, \quad \text{therefore} \quad \mathcal{V} = \{p \in U : \mathcal{B}_p \neq 0\}
    \end{equation*}
    is an open set.

    If \(\mathcal{V}\) were not dense, there would exist an open disc \(D \subset U\) on which \(\mathcal{B} = 0\). Choose an orthonormal frame \((N_1,N_2)\) of \(N_s\Omega\restrict{D}\), a point \(p \in D\), and \(\varphi, \psi \in C_c^\infty(D)\) such that
    \begin{equation*}
        \varphi(p) = \psi(p) = 0 \quad \text{and} \quad D\varphi_p, \ D\psi_p \ \text{are linearly independent.}
    \end{equation*}
    Then
    \begin{equation*}
        Y := \varphi N_1 + \psi N_2
    \end{equation*}
    satisfies \(\mathcal{E}(Y) = - \mathcal{B}(Y) = 0\), so \(Y\) is an infinitesimal isometric deformation. Moreover, we have
    \begin{equation*}
        (DY)_p(X) = D\varphi_p(X)N_1(p) + D\psi_p(X)N_2(p),
    \end{equation*}
    and therefore \(\rank (DY)_p = 2\), a contradiction. Thus \(\mathcal{V}\) is dense.

    Let \(W\) be a connected component of \(\mathcal{V}\). Since \(\rank \mathcal{B}\restrict{W} = 1\),
    \begin{equation*}
        K := \ker (\mathcal B \restrict{W})
    \end{equation*}
    is a smooth line bundle. Fix \(p \in W\), choose an open neighbourhood \(O \subset W\) of \(p\), and let \(\nu \in \Gamma(K \restrict{O})\) be a smooth unit section. Then \(II^\nu = 0\), so by the Weingarten equation \(D_X \nu\) is normal, and using \(|\nu| = 1\), we find
    \begin{equation*}
        0 = X\langle\nu,\nu\rangle = 2\langle D_X \nu, \nu\rangle,
    \end{equation*}
    so \(D_X \nu\) is orthogonal to \(\nu\).
    
    Now we show \(D \nu = 0\). If \(D\nu \neq 0\) at some \(p\), choose linearly independent \(X_1, X_2 \in T_p\Omega\) with \(D_{X_1}\nu \neq 0\) and \(\varphi \in C_c^\infty(O)\) such that \(\varphi(p) = 1\), \(D\varphi_p(X_1) = 0\) and \(D\varphi_p(X_2) \neq 0\). Then \(Y := \varphi \nu\) is infinitesimally isometric, and
    \begin{align*}
        (DY)_p(X_1) &= D\varphi_p(X_1)\nu(p) + \varphi(p) D_{X_1}\nu = D_{X_1} \nu\\
        (DY)_p(X_2) &= D\varphi_p(X_2)\nu(p) + D_{X_2}\nu.
    \end{align*}
    These vectors are linearly independent, again contradicting the hypothesis. Hence \(D\nu = 0\), so \(K\) is locally constant and, since \(W\) is connected, constant on \(W\). Thus there exists a fixed \(0 \neq \nu \in \R^4\) normal to \(s\) along \(W\). Hence
    \begin{equation*}
        X \langle s, \nu\rangle = \langle Ds(X), \nu\rangle = 0 \quad \text{for every} \ X \in \Gamma(TW).
    \end{equation*}
    Therefore \(\langle s, \nu\rangle\) is constant on \(W\), and \(s(W)\) is contained in an affine hyperplane of \(\R^4\).
\end{proof}

Compare this result with 
\begin{proposition}[{\cite[Proposition 1.2]{petruninstadler2019metric}}]
    Every smooth strictly saddle surface in \(\R^3\) is locally metric-minimising.
\end{proposition}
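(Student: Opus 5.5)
Following Petrunin and Stadler, I would reduce the statement to the Dirichlet-energy argument of \zcref{subsubsec:the_quadratic_model}. Fix a point \(p\) and write the surface locally as \(s:\Omega\to\R^3\) with unit normal \(\nu\). Since the normal bundle is a line bundle, strict saddleness means that \(II^\nu\) is indefinite, i.e.\ the Gauss curvature is negative near \(p\). The goal is to find a small disc \(\D\) around \(p\) and a Riemannian metric \(h\) on \(\D\), conformally related to suitable coordinates, such that \(s\restrict{\D}\) is the unique minimiser of the weighted Dirichlet energy
\begin{equation*}
    E_h(f) := \frac12\int_{\D}\langle h^{-1}, g_f\rangle \, dVol_h
\end{equation*}
among maps with the same boundary values. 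Given this, if \(f\) agrees with \(s\) on \(\partial\D\) and \(\ell(f\circ\gamma)\le\ell(s\circ\gamma)\) for all curves, then \zcref{lem:metric_minimising_riemannian_metric} gives \(g_f\le g_s\) a.e. Since \(h^{-1}\) is positive, this implies \(E_h(f)\le E_h(s)\), and uniqueness forces \(f=s\). In fact it suffices to find a positive definite symmetric tensor field \(A\in\Gamma(\Sym^2(T\D))\) with \(\mathcal{E}\)-adjoint condition \(\divg\) of the associated stress vanishing. Concretely, \(s\) must be a critical point of the convex functional \(f\mapsto \tfrac12\int\langle A,g_f\rangle_c\), which means \(\partial_i(A^{ij}\partial_j s)=0\) componentwise (coordinate divergence, with the density absorbed into \(A\)).

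Next I would split the Euler--Lagrange system \(\partial_i(A^{ij}\partial_j s)=0\) into normal and tangential parts. The normal part reads \(\langle A, II^\nu\rangle_c=0\), one algebraic condition. The tangential part is the first-order system \(\divg A = 0\) in the intrinsic sense, i.e.\ \(\alpha_A=0\) in the notation of \zcref{def:twist_form} up to the density normalisation. Write \(A=\phi A_0\), where \(A_0\) is the pointwise generator of the annihilator of the line \(\spanset\{II^\nu\}\) intersected with the appropriate condition. Here there is more freedom than in \(\R^4\), because the annihilator of a one-dimensional image is two-dimensional. So \(A\) ranges over a rank-two bundle of tensors orthogonal to \(II^\nu\). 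Since \(II^\nu\) is indefinite, this bundle contains positive definite elements; for instance, \(g^{-1}\) perturbed appropriately lies in it, because \(\langle g^{-1},II^\nu\rangle_c=2H\) can be corrected along the indefinite direction. The divergence-free condition on a rank-two family of symmetric tensors gives two equations for two unknown functions. This is a determined first-order system, and I would check it is hyperbolic exactly because \(II^\nu\) is indefinite: the characteristic directions are the asymptotic directions of \(s\).

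The key step is solving this semilinear hyperbolic \(2\times2\) system locally, with a positive definite solution. I would use asymptotic coordinates, which exist near \(p\) since \(K<0\), to put the system in diagonal characteristic form. I would then solve a Goursat problem on a small characteristic rectangle with data chosen so that \(A(p)\) is positive definite. By Picard iteration the solution exists on a small neighbourhood and stays positive definite by continuity. I expect this to be the main obstacle, namely verifying that the system really is strictly hyperbolic with smooth coefficients and that positivity survives. It is precisely where dimension three matters: in \(\R^4\) the annihilator is only a line, so the count becomes overdetermined, and the obstruction is \(\Theta\) as in \zcref{thm:twisting_form_locally_metric_minimising}.

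Finally, I would establish uniqueness of the minimiser. The functional \(f\mapsto\tfrac12\int_{\D}A^{ij}\langle\partial_i f,\partial_j f\rangle\) with \(A\) uniformly positive definite on \(\D\) is strictly convex on \(W^{1,2}\) with fixed trace, because the difference of two minimisers has zero trace and vanishing energy, and is hence zero. Therefore the critical point \(s\restrict{\D}\) is its unique minimiser, and the comparison argument of the first paragraph shows that \(s\) is metric-minimising on \(\D\). Since \(p\) was arbitrary, this proves the proposition.
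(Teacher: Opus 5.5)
Your proposal is correct and takes essentially the same approach the paper points to. The paper gives no proof of its own; it cites Petrunin and Stadler, and its introduction and the remark after the quadratic model describe their proof as a reduction to an energy-minimisation problem that leads to a semilinear hyperbolic system. That is exactly your reduction to a weighted energy \(\tfrac12\int_{\D}A^{ij}\langle\partial_i f,\partial_j f\rangle\) with \(A\) positive definite, \(\langle A,II^\nu\rangle_c=0\) and divergence-free, followed by the convexity and uniqueness argument used for the quadratic model.

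Your expectation about the key step also checks out. Writing \(A=a\,\partial_u\otimes\partial_u+b\,\partial_v\otimes\partial_v\) in asymptotic coordinates, the system becomes the diagonal, even linear, system \(\partial_u a+\Gamma^u_{uu}a+\Gamma^u_{vv}b=0\), \(\partial_v b+\Gamma^v_{uu}a+\Gamma^v_{vv}b=0\). It is solvable as a Goursat problem with \(a,b>0\) near \(p\). This works provided you keep the full weighted tensor \(A\) rather than the metric-only energy \(E_h\) of your first display: in two dimensions \(E_h\) depends only on the conformal class of \(h\), which would leave one unknown function for two equations.
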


\section{The Density Theorem}\label{sec:the_density_theorem}

We now prove the density theorem \zcref{introduction:thm:density_theorem} by adding a sequence of small compactly supported infinitesimal isometric deformations to the original surface.

\begin{lemma}\label{lem:strictly_saddle_is_an_open_condition}
    Let \(\Omega \subset \R^2\) be an open disc and \(s_0 : \overline{\Omega} \to \R^4\) be a smooth, strictly saddle embedding on \(\overline{\Omega}\). Then we find \(\varepsilon_* > 0\) such that every smooth map \(s:\overline{\Omega} \to \R^4\) with \(\|s - s_0\|_{C^2(\overline{\Omega})} < \varepsilon_*\) is a smooth, strictly saddle embedding on \(\overline{\Omega}\) as well.
\end{lemma}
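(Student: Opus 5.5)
The proof splits into three open conditions, each stable under small \(C^2\)-perturbations on the compact set \(\overline{\Omega}\): being an immersion, being strictly saddle, and being injective. Each condition gets its own smallness threshold, and \(\varepsilon_*\) is their minimum.

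\emph{Immersion.} Since \(Ds_0\) has rank \(2\) on the compact set \(\overline{\Omega}\), there is \(c>0\) with \(|Ds_0(p)v|\ge 2c|v|\) for all \(p\in\overline{\Omega}\) and \(v \in \R^2\). If \(\|s-s_0\|_{C^1}<c\), then \(|Ds(p)v|\ge c|v|\), so \(s\) is an immersion with a uniform lower bound on \(Ds\).

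\emph{Strictly saddle.} I will rephrase the condition in a way that avoids normal frames. At \(p\), strict saddleness means that for every unit \(\nu\perp \Ima Ds_p\), the form \(II^\nu_p=\langle D^2s_p(\cdot,\cdot),\nu\rangle\) has negative determinant (with respect to any basis). So consider the continuous function
\begin{equation*}
    F_s(p,\nu):=\det\bigl(\langle D^2s_p(e_i,e_j),\nu\rangle\bigr)_{i,j}
\end{equation*}
on the compact set \(K_s:=\{(p,\nu): p\in\overline{\Omega},\ |\nu|=1,\ \nu\perp\Ima Ds_p\}\). For \(s_0\) we have \(F_{s_0}\le -2\delta<0\) on \(K_{s_0}\) for some \(\delta>0\). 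By uniform continuity of \(F_{s_0}\) in \((p,\nu)\) on \(\overline{\Omega}\times\Sp^{3}\), it is \(\le-\delta\) on a \(\eta\)-neighbourhood of \(K_{s_0}\).

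This is the main technical step: showing that \(K_s\) lies in that neighbourhood when \(\|s-s_0\|_{C^1}\) is small. I would argue this via the orthogonal projections onto \(\Ima Ds_p\). These depend continuously, indeed Lipschitz-continuously given the uniform lower bound \(c\), on \(Ds_p\). So a unit normal of \(s\) at \(p\) has a small tangential component for \(s_0\), and renormalising its \(s_0\)-normal part moves it by \(O(\|s-s_0\|_{C^1})\). Finally, \(|F_s(p,\nu)-F_{s_0}(p,\nu)|\le C\|s-s_0\|_{C^2}\) uniformly, since \(F\) is polynomial in bounded quantities. Together these give \(F_s<0\) on \(K_s\), so \(s\) is strictly saddle on \(\overline{\Omega}\).

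\emph{Injectivity.} Injectivity is not open under \(C^0\)-perturbations alone, so I combine a local and a global argument.

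For the local part, note that \(\overline{\Omega}\) is convex. For \(p,q\in\overline{\Omega}\),
\begin{equation*}
    s(q)-s(p)=\int_0^1 Ds_{p+t(q-p)}(q-p)\,dt .
\end{equation*}
By uniform continuity of \(Ds_0\), there is \(r>0\) such that \(|Ds_0(x)-Ds_0(p)|<c/2\) whenever \(|x-p|<r\). If moreover \(\|s-s_0\|_{C^1}<c/4\), then \(|s(q)-s(p)|\ge (2c-c/2-c/2)|q-p|>0\) for \(0<|q-p|<r\).

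For the global part, \(m:=\min\{|s_0(p)-s_0(q)|: |p-q|\ge r\}>0\) by compactness and injectivity of \(s_0\). If \(\|s-s_0\|_{C^0}<m/2\), then \(s(p)\neq s(q)\) whenever \(|p-q|\ge r\).

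Hence \(s\) is injective. A continuous injective map from a compact space to a Hausdorff space is a homeomorphism onto its image, so \(s\) is an embedding. Taking \(\varepsilon_*\) to be the minimum of all the thresholds above completes the proof.
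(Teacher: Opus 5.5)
Your proof is correct and matches the paper's argument for the strict saddle part: uniform negativity of \(\det II^\nu\) on the compact unit normal bundle of \(s_0\), transferred to \(s\) by orthogonal projections and \(C^2\)-closeness. The one difference there is that you pull unit normals of \(s\) back to nearby unit normals of \(s_0\), whereas the paper pushes them forward via \(\Phi_s\); your direction avoids having to check that \(\Phi_s\) reaches every unit normal of \(s\). For the embedding part the paper simply cites Hirsch's openness of embeddings, while your self-contained argument (uniform lower bound on \(Ds\), local injectivity from convexity of \(\overline{\Omega}\), global injectivity from compactness) is also correct.
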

\begin{proof}
    See \zcref{appendix:sec:auxiliary_proof_for_the_density_theorem}.
\end{proof}

\begin{theorem}\label{thm:density_theorem}
    Let \(\Omega \subset \R^2\) be an open disc and let \(s_0 : \overline{\Omega} \to \R^4\) be a smooth, strictly saddle embedding. Then, for every integer \(a \geq 0\) and every \(\varepsilon > 0\), there exists a smooth, strictly saddle embedding \(s_\infty:\overline{\Omega} \to \R^4\) such that
    \begin{equation}\label{eq:thm:density_theorem:1}
        \|s_\infty-s_0\|_{C^a(\overline{\Omega})} < \varepsilon,
    \end{equation}
    and \(s_\infty\) is nowhere locally metric-minimising in \(\Omega\).
\end{theorem}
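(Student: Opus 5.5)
The proof iterates over a countable basis $(D_k)_{k\ge1}$ of open discs of $\Omega$ with $\overline{D_k}\subset\Omega$. We build smooth maps $s_k = s_{k-1} + t_k Y_k$, where $Y_k$ is a compactly supported infinitesimal isometric deformation of $s_{k-1}$ supported in a small disc $B_k\Subset D_k$. The parameters $t_k>0$ are chosen so small that $\sum_k \|t_kY_k\|_{C^k(\overline\Omega)}$ converges, with total less than $\min(\varepsilon,\varepsilon_*)/2$. Then $s_k\to s_\infty$ in every $C^a$, so $s_\infty$ is smooth and satisfies \eqref{eq:thm:density_theorem:1}. By \zcref{lem:strictly_saddle_is_an_open_condition}, every $s_k$ and $s_\infty$ is a strictly saddle embedding on $\overline\Omega$, which legitimises each step of the induction. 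Since every point $p\in\Omega$ and every disc $\D$ centred at $p$ contains some $D_k$, it suffices to arrange the following: after step $k$, there is a continuous competitor $s'$ which agrees with $s_\infty$ on $\partial\D$, has $g_{s'}\le g_{s_\infty}$ everywhere, and has strict inequality on some open set. By \zcref{lem:metric_minimising_riemannian_metric}, this shows that $s_\infty|_{\D}$ is not metric-minimising.

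The key observation is that $g_{s_k}-g_{s_{k-1}} = t_k^2 (DY_k)^\top DY_k\ge 0$, with strict positivity where $\rank DY_k=2$. Hence $s_{k-1}$ itself is a competitor for $s_k$ that is strictly shorter on an open set. To get such a $Y_k$, apply \zcref{prop:existence_infinitesimal_deformation} to $s_{k-1}$ on $D_k$. By \zcref{lem:rank_0-1_dense}, since $s_{k-1}$ is strictly saddle, the resulting nonzero compactly supported $Y_k$ has $\rank DY_k=2$ on a nonempty open set $G_k\subset B_k$. Strict saddleness in $\R^4$ also forces $\rank\mathcal B=2$, so alternatively \zcref{lem:existence_rank_2_infinitesimal_isometric_deformation} applies directly.

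The main obstacle is that later steps modify the metric again, and they could destroy the strict gap. We therefore need to check that $s_\infty$, not just $s_k$, admits a shorter competitor with the same boundary values. Near $G_k$, pick a bump function $\chi$ and consider the competitor $s' := s_\infty - t_k\chi Y_k$. More precisely, we compare the map built from $s_{k-1}$ plus all later corrections against $s_\infty$. We need to control
\[
g_{s_\infty}-g_{s'} \;\ge\; t_k^2 (DY_k)^\top DY_k - C\sum_{j>k}\|t_jY_j\|_{C^1}.
\]
To make this positive, choose a compact $K_k\subset G_k$ on which $(DY_k)^\top DY_k\ge c_k g_{s_0}$, and then impose $\sum_{j>k}\|t_jY_j\|_{C^1}\ll t_k^2c_k$ inductively. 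The cross terms must also be handled, because the later $Y_j$ are isometric only for $s_{j-1}$, not for $s'$. I would try the cleaner route first. Take the competitor $s' := s_\infty - t_kY_k$ and verify that the difference of metrics equals $t_k^2(DY_k)^\top DY_k + O\bigl(t_k\sum_{j>k}\|t_jY_j\|_{C^1}\bigr)$. For the inequality $g_{s'}\le g_{s_\infty}$ to hold everywhere, and not only on $K_k$, the error must be dominated where $DY_k$ degenerates. To arrange this, I would insist that later supports $B_j$, $j>k$, avoid $\supp Y_k\setminus \intset K_k$, or at least the degenerate set of $DY_k$. This is possible by choosing each $B_j$ inside $D_j$ minus a closed nowhere dense set, since these sets are nowhere dense by \zcref{lem:rank_0-1_dense}, and the relevant countable union is handled by taking small discs at each stage. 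On $K_k$, the smallness of the tail gives strict inequality. This bookkeeping of supports and tail sizes is the delicate part of the proof.
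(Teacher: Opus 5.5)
Your overall architecture is the paper's. You take the uncut competitor $s'=s_\infty-t_kY_k=s_{k-1}+B_k$ with tail $B_k:=\sum_{j>k}t_jY_j$, for which $(g_{s_\infty}-g_{s'})(X,X)=2t_k\langle DB_kX,DY_kX\rangle+t_k^2|DY_kX|^2$, and you place later discs so that they avoid a nowhere dense set attached to $DY_k$. You are right to drop the cut-off version $s_\infty-t_k\chi Y_k$: $\chi Y_k$ is not infinitesimally isometric, so it produces a first-order term $2t_k\,d\chi(X)\langle Ds_{k-1}X,Y_k\rangle$ of indefinite sign.

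\textbf{The gap: non-negativity of $g_{s_\infty}-g_{s'}$ away from $K_k$.} The sets you propose to avoid are not nowhere dense. Write $R_k:=\{\rank DY_k=2\}$. Then $\supp Y_k\setminus\intset K_k$ contains the non-empty open set $R_k\setminus K_k$, and $\{\rank DY_k\le 1\}$ contains $\Omega\setminus\supp Y_k$. Both contain basis discs $D_j$, $j>k$, in which you must still place a deformation. What \zcref{lem:rank_0-1_dense} actually gives is only that the complement of $\mathcal G_k:=\intset\{DY_k=0\}\cup R_k$ is closed and nowhere dense. Avoiding it forces each $\overline{B_j}$, $j>k$, either into $\intset\{DY_k=0\}$, which is harmless because the cross term vanishes there, or into $R_k$.

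In the second case your uniform condition $\sum_{j>k}\|t_jY_j\|_{C^1}\ll t_k^2c_k$ does nothing off $K_k$. The cross term is bounded by $2t_k\|DB_k(p)\|_{op}\,|DY_k(p)X|\,|X|$. So non-negativity at $p\in R_k$ requires $\|DB_k(p)\|_{op}\le\frac12 t_k\mu_k(p)$, where $\mu_k(p):=\min_{|\xi|=1}|DY_k(p)\xi|$. But $\mu_k\to0$ towards $\partial R_k$, and basis discs with index $j>k$, hence later $B_j$, necessarily occur there. Your conditions therefore do not guarantee $g_{s'}\le g_{s_\infty}$, and the competitor is not shown to be admissible.

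\textbf{The missing idea: a relative, disc-by-disc smallness condition.} This is what the paper imposes through the constants $M_{j,n}$. At step $j$, for each $k<j$ with $\overline{B_j}\subset R_k$, choose $t_j$ with $t_j\sup_{\overline{B_j}}\|DY_j\|_{op}/\mu_k<2^{-j-2}t_k$. This is possible because $\mu_k$ is continuous and positive on the compact set $\overline{B_j}$. Then $\|DB_k(p)\|_{op}\le\frac14 t_k\mu_k(p)$ at every $p\in R_k$, so $g_{s_\infty}-g_{s'}\ge\frac12 t_k^2(DY_k)^\top DY_k$ there. At points with $DY_k(p)=0$ the difference vanishes. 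The remaining rank-one points lie outside $\mathcal G_k\supset\overline{B_j}$ for all $j>k$, so every $Y_j$ vanishes near them and $DB_k(p)=0$. With this, $K_k$ and $c_k$ become unnecessary.

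\textbf{A minor point.} Bounding $\|t_kY_k\|_{C^k}$ controls neither $\|s_\infty-s_0\|_{C^a}$ through the first few terms nor the $C^2$-distance needed for \zcref{lem:strictly_saddle_is_an_open_condition}. Use $C^{k+\max(a,2)}$-norms instead.
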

\begin{proof}
    It suffices to prove the statement for \(a\geq2\), since the cases \(a=0,1\) follow from the case \(a=2\). We may further assume that \(\varepsilon\leq\varepsilon_*\), where \(\varepsilon_*\) is the constant from \zcref{lem:strictly_saddle_is_an_open_condition}. Pick a countable basis \(\{U_n\}_{n\in\N}\) for the topology of \(\Omega\), consisting of open discs \(U_n \Subset \Omega\).
    
    \emph{Step 1:} We are going to construct by induction:
    \begin{enumerate}[label=(\alph*)]
        \item smooth strictly saddle embeddings \(s_n : \overline{\Omega} \to \C^2\)
        \item non-trivial compactly supported infinitesimal isometric deformations \(Y_n\)
        \item numbers \(t_n > 0\) and open discs \(D_n \Subset U_n\)
    \end{enumerate}
    with
    \begin{equation*}
        s_n = s_{n-1} + t_nY_n.
    \end{equation*}
    Assume that we have already constructed \(s_0, \dots, s_{n-1}\) and \(Y_1, \dots, Y_{n-1}\). For \(j < n\), set
    \begin{equation*}
        Z_j := \intset \{p \in \Omega : DY_j(p) = 0\}, \quad R_j := \{p \in \Omega :  \rank DY_j(p) = 2\} \quad \text{and} \quad \mathcal{G}_j := Z_j \cup R_j.
    \end{equation*}
    The set \(\mathcal{G}_j\) is open and dense in \(\Omega\) according to \zcref{lem:rank_0-1_dense}. Pick a point \(p_n\) in the non-empty open set \(U_n \cap \bigcap_{j < n} \mathcal{G}_j\). For every \(j < n\), we have either \(p_n \in Z_j\) or \(p_n \in R_j\). Therefore, we find an open disc \(V_n\) around \(p_n\) with \(V_n \Subset U_n\) satisfying
    \begin{equation*}
        V_n \Subset Z_j \quad \text{or} \quad V_n \Subset R_j \quad \text{for every} \ j < n.
    \end{equation*}
    We find a non-trivial infinitesimal isometric deformation \(Y_n \in C_c^\infty(V_n,\C^2)\) of \(s_{n-1}\) by \zcref{prop:existence_infinitesimal_deformation}. Pick a disc \(D_n\) with \(\supp Y_n \Subset D_n \Subset V_n\) and define, for every index \(j < n\) with \(V_n \Subset R_j\)
    \begin{equation*}
        \mu_j(x) := \inf_{|\xi| = 1} |DY_j(x)\xi| \quad \text{and} \quad M_{j,n} := \sup_{x \in \overline{V_n}} \frac{\|DY_n(x)\|_{op}}{\mu_j(x)}.
    \end{equation*}
    In this case, \(DY_j\) has rank two on \(\overline{V}_n\), so \(\mu_j\) is continuous and strictly positive there. 
    
    Pick \(t_n\) small enough such that
    \begin{equation*}
        \|t_nY_n\|_{C^{n+a}(\overline{\Omega})} < 2^{-n-2}\varepsilon \quad \text{and} \quad t_n M_{j,n} < 2^{-n-2} t_j \ \text{for all} \ j < n \ \text{with} \ V_n \Subset R_j.
    \end{equation*}
    We now define
    \begin{equation*}
        s_n := s_{n-1} + t_n Y_n.
    \end{equation*}
    By \zcref{lem:strictly_saddle_is_an_open_condition}, \(s_n\) is a smooth, strictly saddle embedding since
    \begin{equation*}
        \|s_n - s_0\|_{C^2(\overline{\Omega})} \leq \sum_{i=1}^n \|s_i - s_{i-1}\|_{C^2(\overline{\Omega})} \leq \sum_{i=1}^n \|t_iY_i\|_{C^2(\overline{\Omega})} < \varepsilon,
    \end{equation*}
    since the geometric series gives \(\sum_{i=1}^\infty 2^{-i-2} = \frac{1}{4}\).  
    
    \emph{Step 2:} Now we prove that \(s_\infty\) is a smooth, strictly saddle embedding. It is easy to see that \(\{s_n\}_{n \in \N}\) is a Cauchy sequence in \(C^m\) for every \(m \in \N_0\). Indeed, for fixed \(m \in \N_0\) and arbitrary integers \(k \geq l \geq m\), we have
    \begin{equation*}
        \begin{split}
            \|s_k - s_l\|_{C^{m+2}(\overline{\Omega})} &\leq \sum_{i = l+1}^k \|s_i - s_{i-1}\|_{C^{m+2}(\overline{\Omega})} \leq \sum_{i=l+1}^k \|s_i - s_{i-1}\|_{C^{m+a}(\overline{\Omega})}\\
            &\leq \sum_{i=l+1}^k \|s_i - s_{i-1}\|_{C^{i+a}(\overline{\Omega})} < \sum_{i=l+1}^\infty 2^{-i-2}\varepsilon \xrightarrow{l \to \infty} 0.
        \end{split}
    \end{equation*}
    Thus we find a smooth map \(s_\infty:\overline{\Omega} \to \C^2\) such that \(s_n \to s_\infty\) in \(C^m\) for every \(m\). By the same computation, we find
    \begin{equation*}
        \|s_\infty - s_0\|_{C^{a}(\overline{\Omega})} \leq \sum_{i=1}^\infty \|t_i Y_i\|_{C^{a}(\overline{\Omega})} \leq \sum_{i=1}^\infty \|t_i Y_i\|_{C^{i+a}(\overline{\Omega})} \leq  \varepsilon \sum_{i=1}^\infty 2^{-i-2} = \frac{\varepsilon}{4} < \varepsilon.
    \end{equation*}
    Consequently, the map \(s_\infty\) is a smooth, strictly saddle embedding by \zcref{lem:strictly_saddle_is_an_open_condition}, and we have proved \zcref{eq:thm:density_theorem:1}.

    \emph{Step 3:} Here we construct a local competitor of \(s_\infty\) to show that \(s_\infty\) is not locally metric-minimising. We fix \(j \in \N\) and set
    \begin{equation*}
        B_j := s_\infty - s_j = \sum_{n = j+1}^\infty t_n Y_n.
    \end{equation*}
    Define the map
    \begin{equation*}
        q_j : \Omega \to \C^2, \ q_j := s_{j-1} + B_j,
    \end{equation*}
    so that \(s_\infty = q_j + t_jY_j\). The map \(q_j\) will serve as a competitor. It follows from \(\supp Y_j \Subset D_j\) that
    \begin{equation*}
        q_j \restrict{\Omega \setminus D_j} = s_\infty \restrict{\Omega \setminus D_j}.
    \end{equation*}
    For a tangent vector \(X \in T_p \Omega\), we obtain
    \begin{equation*}
        \begin{split}
            (g_{s_\infty} - g_{q_j})(X,X) &= 2t_j\langle Dq_j(X),DY_j(X)\rangle + t_j^2 |DY_j(X)|^2\\
            &= 2t_j \langle Ds_{j-1}(X), DY_j(X)\rangle + 2t_j \langle DB_j(X), DY_j(X)\rangle + t_j^2 |DY_j(X)|^2\\
            &= 2t_j \langle DB_j(X), DY_j(X)\rangle + t_j^2 |DY_j(X)|^2.
        \end{split}
    \end{equation*}
    In the last equality, we used that the first term vanishes since \(Y_j\) is an infinitesimal isometric deformation of \(s_{j-1}\).
    
    \emph{Case 1:} \(\rank DY_j(p) = 0\). Then we immediately get \((g_{s_\infty} - g_{q_j})(X,X) = 0\).
    
    \emph{Case 2:} \(\rank DY_j(p) = 1\). In this case we have \(p \not \in \mathcal{G}_j\), so \(p \not\in \overline{V}_n\) for all \(n > j\) by construction. Since \(\supp Y_n \Subset V_n\), the deformation \(Y_n\) vanishes in a neighbourhood of \(p\) for every \(n > j\), so 
    \begin{equation}\label{eq:thm:existence_not_locally_metric-minimising_saddle_surface:1}
        (g_{s_\infty} - g_{q_j})(X,X) = t_j^2 |DY_j(X)|^2 \geq 0.
    \end{equation}
    
    \emph{Case 3:} \(\rank DY_j(p) = 2\). Then we have \(|DY_j(X)| \geq \mu_j(p)|X|\). First suppose that \(DY_n(p) = 0\) for every \(n > j\). Then we are in the situation of \zcref{eq:thm:existence_not_locally_metric-minimising_saddle_surface:1}. Otherwise, let \(n > j\) be any index such that \(DY_n(p) \neq 0\). Then \(p \in V_n\). Since \(p \in R_j\), we infer \(\overline{V}_n \subset R_j\) by construction, and
    \begin{equation*}
        t_n\|DY_n(p)\|_{op} \leq t_nM_{j,n} \mu_j(p) \leq 2^{-n-2}t_j \mu_j(p).
    \end{equation*}
    Hence,
    \begin{equation*}
        \|DB_j(p)\|_{op} \leq \sum_{n=j+1}^\infty t_n\|DY_n(p)\|_{op} \leq t_j \mu_j(p) \sum_{n=j+1}^\infty 2^{-n-2} \leq \frac{1}{4}t_j\mu_j(p),
    \end{equation*}
    and therefore
    \begin{equation*}
        2t_j|\langle DB_j(X), DY_j(X)\rangle| \leq \frac{1}{2}t_j^2|DY_j(X)|^2.
    \end{equation*}
    It follows that
    \begin{equation*}
        (g_{s_\infty} - g_{q_j})(X,X) \geq \frac{1}{2}t_j^2|DY_j(X)|^2.
    \end{equation*}
    The expression is strictly positive for \(X \neq 0\). The three cases show
    \begin{equation*}
        g_{s_\infty} \geq g_{q_j} \quad \text{on} \ \Omega
    \end{equation*}
    with strict inequality as quadratic forms whenever \(\rank DY_j(p) = 2\).

    \emph{Step 4:} Now we prove that \(s_\infty\) is nowhere locally metric-minimising. Fix \(p \in \Omega\), and let \(\D \Subset \Omega\) be an arbitrary disc centred at \(p\). Since \(\{U_n\}_{n \in \N}\) is a basis of the topology of \(\Omega\), we find \(n \in \N\) with \(D_n \Subset U_n \Subset \D\). Since \(Y_n\) is non-trivial and \(\Omega\) is connected, \zcref{lem:rank_0-1_dense} implies that \(DY_n\) has rank \(2\) on a non-empty open subset of \(D_n\). By Step 3,
    \begin{equation*}
        q_n \restrict{\partial\D} = s_\infty\restrict{\partial\D} \quad \text{and} \quad g_{s_\infty} \geq g_{q_n} \ \text{on} \ \D
    \end{equation*}
    with strict inequality as quadratic forms on a non-empty open subset of \(D_n\). Consequently,
    \begin{equation*}
        \ell(s_\infty \circ \gamma) \geq \ell(q_n \circ \gamma) \quad \text{for every Lipschitz curve} \ \gamma \ \text{in} \ \D,
    \end{equation*}
    and the inequality is strict for some smooth curve contained in the set where \(\rank DY_n = 2\). Thus \(q_n\) is the desired competitor for \(s_\infty\). Since \(\D\) was arbitrary, \(s_\infty\) is nowhere locally metric-minimising.
\end{proof}

\section*{Appendix}\label{sec:appendix}
\appendix

\section[Proofs for Saddle and Metric-Minimising Surfaces]{Proofs for \zcref{sec:saddle_and_metric-minimising_surfaces}}\label{appendix:sec:proofs_saddle_and_metric_minimising_surfaces}

Here we present the proofs omitted from \zcref{sec:saddle_and_metric-minimising_surfaces}.

\begin{proof}[Proof of \zcref{prop:saddle_second_fundamental_form}]
    Assume that the scalar second fundamental form \(II_p^\nu\) is indefinite for all \(\nu \in (N_s \Omega)_p\setminus\{0\}\). By compactness of \(S(N_s \Omega)_p\) and continuity of the scalar second fundamental form, after shrinking to a sufficiently small disc \(\D\) centred at \(p\), \(II_q^\nu\) is indefinite for every \(q \in \D\) and every \(0 \neq \nu \in (N_s\Omega)_q\). Let \(\Pi\) be a hyperplane, say
    \begin{equation*}
        \Pi := \{x\ \in \R^d : \langle a, x\rangle = c\} \quad \text{for some} \ a \in \R^d\setminus\{0\} \ \text{and} \ c \in \R.
    \end{equation*}
    Consider the function \(h(q) := \langle a, s(q)\rangle - c\). Then \(s^{-1}(\Pi) = \{h = 0\}\). Assume, to the contrary, that there is a connected component \(C\) of \(\D \setminus s^{-1}(\Pi)\) which does not meet \(\partial\D\). Then \(h\) has a fixed sign on \(C\), and we may assume, after replacing \(h\) by \(-h\), that \(h\) is positive on \(C\). Hence \(h\) attains a positive maximum at some \(q \in C\), so
    \begin{equation*}
        \langle a, Ds_q(v)\rangle = Dh_q(v) = 0 \quad \text{for all} \ v \in T_q\Omega,
    \end{equation*}
    which implies that \(a \in (N_s \Omega)_q\). We note
    \begin{equation*}
        II_q^a(v,w) = \langle a, D^2s_q(v,w)\rangle = D^2h_q(v,w),
    \end{equation*}
    and find that \(II_q^a\) is indefinite. On the other hand, since \(q\) is a local maximum of \(h\), the Hessian \(D^2h_q\) is negative semidefinite. Since \(D^2h_q = II_q^a\), this contradicts the indefiniteness of \(II_q^a\). Hence (a) follows.

    Now we prove (b). Assume that \(II_p^\nu\) is definite for some \(\nu \in (N_s \Omega)_p \setminus \{0\}\). By replacing \(\nu\) by \(-\nu\) if necessary, we may assume that \(II_p^\nu\) is positive definite. Suppose, for contradiction, that \(s\) is saddle at \(p\), and let \(\D\) be a disc from the definition \zcref{def:saddle} guaranteeing the saddle property at \(p\). Consider the function
    \begin{equation*}
        h(q) := \langle \nu, s(q)-s(p)\rangle.
    \end{equation*}
    Since \(\nu\) is normal to \(s\) at \(p\), Taylor's theorem yields
    \begin{equation*}
       h(p+\xi) = \frac{1}{2}II_p^\nu(\xi,\xi) + o(|\xi|^2).
    \end{equation*}
    Hence \(h\) has a strict local minimum at \(p\). Choose \(r > 0\) sufficiently small such that \(\overline{B_r(p)} \Subset \intset \D\) and \(h > 0\) on \(\overline{B_r(p)} \setminus \{p\}\). Since \(\partial B_r(p)\) is compact, we may choose \(0 < \varepsilon<\min_{\partial B_r(p)} h\). Let \(C\) be the connected component of \(\{q \in \D : h(q) < \varepsilon\}\) containing \(p\). Since \(h > \varepsilon\) on \(\partial B_r(p)\), the component \(C\) is contained in \(B_r(p)\), and hence \(C \cap \partial \D = \emptyset\). Set
    \begin{equation*}
        \Pi = \{x \in \R^d:\langle \nu, x-s(p) \rangle = \varepsilon\}.
    \end{equation*}
    Then \(s^{-1}(\Pi) = \{h = \varepsilon\}\), and \(C\) is a connected component of \(\D \setminus s^{-1}(\Pi)\) which does not meet \(\partial\D\). This contradicts the saddle property of \(s\) on \(\D\).
\end{proof}

\begin{proof}[Proof of \zcref{lem:metric_minimising_riemannian_metric}]
    Clearly, (b) implies (a) by the formula for the length of a curve given in \cite[Chapter 2]{lee2018introduction}.
    
    Now assume the validity of (a). Fix \(p\in\Omega\) and \(v\in T_p\Omega\), and, for sufficiently small \(t > 0\), consider the curve
    \begin{equation*}
        \gamma_t:[0,1] \to \Omega, \ \gamma_t(r) = p+rtv.
    \end{equation*}
    We obtain
    \begin{equation*}
        \ell(s \circ \gamma_t) = t\int_0^1 |ds_{\gamma_t(r)}(v)|\,dr = t|ds_p(v)| + o(t) \quad \text{as} \ t \to 0,
    \end{equation*}
    and the same formula holds for \(s'\). Dividing by \(t\) and letting \(t \to 0\), we obtain
    \begin{equation*}
        |Ds_p(v)| \geq |Ds'_p(v)|,
    \end{equation*}
    hence
    \begin{equation*}
        g_s(v,v) = |Ds_p(v)|^2 \geq |Ds'_p(v)|^2 = g_{s'}(v,v),
    \end{equation*}
    and (b) follows.
\end{proof}

\begin{proof}[Proof of \zcref{prop:locally_metric-minimising_saddle}]
    Assume, to the contary, that \(s\) is not saddle at \(p\). Choose a disc \(\D\) centred at \(p\) such that \(s\restrict{\D}\) is metric-minimising. Then there exists an affine hyperplane
    \begin{equation*}
        \Pi =\{x \in \R^d : \langle a, x\rangle = c\}
    \end{equation*}
    with \(a \in \Sp^{d-1}\) abc a connected component \(C\) of \(\D \setminus s^{-1}(\Pi)\) such that \(C \cap \partial \D = \emptyset\). Set \(h(x) := \langle a, x\rangle - c\). After replacing \(a\) by \(-a\) we may assume that \(h > 0\) on \(C\). Moreover, \(h \equiv 0\) on \(\partial C\). Now we consider the orthogonal projection
    \begin{equation*}
        P : \R^d \to \Pi, \ P(x) = x - h(x)a,
    \end{equation*}
    and set
    \begin{equation*}
        \tilde{s}(q) = \begin{cases}
            P(s(q)) = s(q)-h(q)a, & q \in C\\
            s(q), & q \notin C.
        \end{cases}
    \end{equation*}
    We need to check that \(\tilde{s}\) is Lipschitz continuous. The only non-tivial part is the case \(x \in C\) but \(y \notin C\). Pick a point in the segment \(z \in [x,y]\) with \(z \in \partial C\). We find
    \begin{equation*}
        \begin{split}
            |\tilde{s}(x)-\tilde{s}(y)| &\leq |\tilde{s}(x) - \tilde{s}(z)| + |\tilde{s}(z) - \tilde{s}(y)|
            = |P(s(x)) - P(s(z))| + |\tilde{s}(x) - \tilde{s}(y)|\\
            & \leq \Lip(s)|x-z| + \Lip(s)|z-y| \leq \Lip(s)|x-y|,
        \end{split}
    \end{equation*}
    and, in particular \(\Lip(\tilde{s}) \leq \Lip(s)\).

    Let \(\gamma : [0,1] \to \D\) be an arbitary Lipschitz curve, set \(u = s \circ \gamma\), \(\tilde{u} = \tilde{s} \circ \gamma\) and \(E := \gamma^{-1}(C)\). Both functions \(u\) and \(\tilde{u}\) are Lipschitz, and therefore almost everywhere differentiable. Since \(E\) is open, we may write locally around every \(t \in E\) \(\tilde{u} = P \circ u\), so we have for almost every \(t \in E\) the identity
    \begin{equation*}
        \tilde{u}'(t) = DP(u'(t)) = u'(t) - \langle a, u'(t)\rangle a.
    \end{equation*}
   In particular, we get \(|\tilde{u}'(t)| \leq |u'(t)|\). On the complement \(F = [0,1] \setminus E\) we have \(u = v\), hence \(u' = v'\) almost everywhere. So we obtain \(|v'| \leq |u'|\) almost everywhere on \([0,1]\), and, in particular, 
    \begin{equation}\label{eq:prop:locally_metric-minimising_saddle:1}
        \ell(\tilde{s} \circ \gamma) = \int_0^1 |v'(t)| dt \leq \int_0^1 |u'(t)|dt = \ell(s \circ \gamma).
    \end{equation}
    It remains to find a Lipschitz curve \(\gamma\) with strict inequality in \zcref{eq:prop:locally_metric-minimising_saddle:1}. Choose two points \(q_1, q_2 \in C\) with \(h(q_0) \neq h(q_1)\) and join them by a Lipschitz curve \(\gamma: [0,1] \to C\). Since the function \(h \circ \gamma\) is not constant, we find \((h \circ \gamma)' \neq 0\) on a set of positive measure. We thereby infer
    \begin{equation*}
        |(\tilde{s} \circ \gamma)'|^2 = |(s \circ \gamma)'|^2 - |(h \circ \gamma)'|^2,
    \end{equation*}
    so we conclude \(\ell(\tilde{s} \circ \gamma) < \ell(s \circ \gamma)\), a contradiction!
\end{proof}

\section[Auxiliary Proofs for Infinitesimal Deformations]{Auxiliary Proofs for \zcref{sec:infinitesimal_deformations}}\label{appendix:sec:auxiliary_proofs_for_infinitesimal_deformations}

\begin{proof}[Proof of \zcref{lem:twisting_form_graph_coordinates}]
    We have by definition \(A^{ij}II_{ij} = 0\). On the other hand, we find \(A^{ij}s_{ij} = 0\). Indeed, let us consider \(k = 1, \dots, d-2\) the normal frame
    \begin{equation*}
        N_k := (-\partial_1 u^k, - \partial_2 u^k, 0, \dots, \underbrace{1}_k, \dots, 0).
    \end{equation*}
    We get
    \begin{equation*}
        II_{ij}^{N_k} = \langle II_{ij}, N_k\rangle = \langle s_{ij}, N_k\rangle = \partial_{ij}u^k.
    \end{equation*}
    It follows that \(\mathcal{B}(N_k) = D^2 u^k\), and we obtain
    \begin{equation*}
        0 = \langle A, D^2u^k\rangle_c = A^{ij}\partial_{ij}u^k,
    \end{equation*}
    so \(A^{ij}s_{ij} = 0\). The Gauss formula \cite[Theorem 8.2]{lee2018introduction} yields
    \begin{equation*}
        s_{ij} = \Gamma_{ij}^k s_k + II_{ij},
    \end{equation*}
    and multiplying by \((A^{ij})\) results in
    \begin{equation}\label{eq:lem:twisting_form_graph_coordinates:1}
        A^{ij}\Gamma_{ij}^k = 0.
    \end{equation}
    Now we compute
    \begin{equation*}
        (\divg A)^j = \nabla_i A^{ij} = \partial_i A^{ij} + \Gamma_{ik}^i A^{kj} + \Gamma_{ik}^j A^{ik}
    \end{equation*}
    The last term vanishes by \zcref{eq:lem:twisting_form_graph_coordinates:1}, and furthermore we have
    \begin{equation*}
        \Gamma_{ik}^i =\partial_k \log \sqrt{\det g}.
    \end{equation*}
    Hence,
    \begin{equation*}
        \divg A = \divg_0 A + A^\sharp (d\log \sqrt{\det g}),
    \end{equation*}
    and applying \(-(A^\sharp)^{-1}\) gives
    \begin{equation*}
        \alpha_A = \alpha - d\log \sqrt{\det g}.
    \end{equation*}
    Taking derivatives yields \(\Theta = d\alpha_A = d\alpha\). Finally, if \(\alpha = \alpha_1 dx + \alpha_2 dy\), then
    \begin{equation*}
        d\alpha = (\partial_x \alpha_2 - \partial_y \alpha_1) dx \wedge dy.
    \end{equation*}
\end{proof}

\begin{lemma}\label{appendix:lem:semidefinite_reduction}
    Let \(\D\) be a disc and \(A \in \Gamma(\Sym^2(T\D))\) be nowhere vanishing and nowhere indefinite. After replacing \(A\) by \(-A\) if necessary, \(A\) is positive semidefinite on \(\D\).
\end{lemma}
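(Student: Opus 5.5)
The plan is a connectedness argument on the disc, using that the set of nonzero semidefinite symmetric $2\times 2$ matrices splits into two disjoint closed cones. At each $q\in\D$ the form $A_q$ is nonzero and not indefinite, so it is either positive semidefinite and nonzero, or negative semidefinite and nonzero. Write
\begin{equation*}
    P := \{q \in \D : A_q \ \text{is positive semidefinite}\}, \qquad N := \{q \in \D : A_q \ \text{is negative semidefinite}\}.
\end{equation*}
The argument has four steps.

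\textbf{Step 1 ($P\cup N=\D$ and $P\cap N=\emptyset$).} Coverage is the dichotomy just described. If $q\in P\cap N$, then $A_q$ is both positive and negative semidefinite, hence $A_q=0$, which contradicts that $A$ is nowhere vanishing.

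\textbf{Step 2 ($P$ and $N$ are closed).} Work in a coordinate frame. By Sylvester's criterion for semidefiniteness, a symmetric $2\times 2$ matrix is positive semidefinite iff
\begin{equation*}
    A^{11}\ge 0, \qquad A^{22}\ge 0, \qquad \det A\ge 0.
\end{equation*}
These are closed conditions in the coefficients, and the coefficients are continuous on $\D$, so $P$ is closed. The same holds for $N$, with the signs of the diagonal entries reversed.

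\textbf{Step 3 (trace version, to make openness transparent).} Alternatively, nowhere indefinite means $\det A\ge 0$ everywhere. A nonzero symmetric matrix with $\det\ge 0$ has nonzero trace: if the trace were $0$, the eigenvalues would be $\lambda$ and $-\lambda$, giving $\det A=-\lambda^2\le 0$, hence $\lambda=0$ and $A=0$. Semidefiniteness is independent of the choice of frame, so $\trace A$ can be taken in any fixed coordinate frame, e.g.\ via the Euclidean coordinates of the disc. The function $\trace A$ is then continuous and nonvanishing. It is positive exactly on $P$ and negative exactly on $N$, so $P$ and $N$ are also open.

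\textbf{Step 4 (conclusion).} The disc $\D$ is connected, and by Steps 1 to 3 it is the disjoint union of the clopen sets $P$ and $N$. Hence $\D=P$ or $\D=N$. In the second case, replacing $A$ by $-A$ makes it positive semidefinite on all of $\D$.

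The only step needing care is showing that nonzero semidefinite forms cannot continuously switch sign. The trace argument in Step 3 handles this: $\trace A$ can vanish at a semidefinite $A$ only if $A=0$.
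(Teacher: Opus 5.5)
Your proof is correct and rests on the same key fact as the paper's: a nonzero form that is not indefinite has nonzero trace, so by continuity and connectedness of the disc the trace has constant sign, and after possibly replacing \(A\) by \(-A\) this forces positive semidefiniteness. The paper takes the metric trace \(\trace(A^\sharp\circ g^\flat)\) instead of a coordinate trace and skips the clopen bookkeeping (your Step 2 is redundant once Step 3 shows \(P\) and \(N\) are complementary open sets), but the argument is essentially the same.
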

\begin{proof}
    Since \(A\) is symmetric, nowhere vanishing, and nowhere indefinite, we have
    \begin{equation*}
        \trace A = \trace(A^\sharp \circ g^\flat) \neq 0
    \end{equation*}
    at every point. Hence, \(\trace A\) has constant sign. After replacing \(A\) by \(-A\) if necessary, we may assume \(\trace A > 0\). Since \(A\) is nowhere indefinite, it follows that \(A\) is positive semidefinite.
\end{proof}

\begin{lemma}\label{appendix:lem:existence_not_locally_metric-minimising_saddle_surface_higher_dimension}
    For every \(d \geq 4\), there exist an open disc \(\Omega \subset \C\) and a smooth saddle embedding \(s:\overline{\Omega} \to \R^d\) that is nowhere locally metric-minimising in \(\Omega\).
\end{lemma}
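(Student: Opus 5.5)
The natural route is to combine the density theorem with an isometric lift into higher codimension. For \(d = 4\) the statement follows at once from \zcref{thm:density_theorem}. Take \(\Omega\) to be a small open disc around the origin on which the twisted quadratic model \eqref{eq:subsubsec:a_twisted_local_model:1}, or simply the quadratic model \eqref{eq:subsubsec:the_quadratic_model:1}, is a smooth strictly saddle embedding on \(\overline{\Omega}\). Applying \zcref{thm:density_theorem} with any \(a\) and \(\varepsilon\) gives a smooth strictly saddle embedding \(s_\infty:\overline{\Omega}\to\R^4\) that is nowhere locally metric-minimising in \(\Omega\). By \zcref{prop:saddle_second_fundamental_form}(a) it is saddle.

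For \(d > 4\), I would compose with the standard linear isometric inclusion \(\iota:\R^4\to\R^d\), \(x\mapsto(x,0,\dots,0)\), and set \(s := \iota\circ s_\infty\).

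\emph{Saddle property.} Any affine hyperplane \(\Pi\subset\R^d\) either contains \(\iota(\R^4)\), or is parallel to it without meeting it, or meets it in an affine hyperplane of \(\R^4\).
\begin{itemize}
\item If \(\iota(\R^4) \subset \Pi\), then \(s^{-1}(\Pi)\) is the whole disc and there are no components.
\item If \(\Pi\) is parallel to \(\iota(\R^4)\) and disjoint from it, then \(s^{-1}(\Pi) = \emptyset\), and the single component \(\D\) meets \(\partial\D\).
\item Otherwise \(s^{-1}(\Pi) = s_\infty^{-1}(\Pi\cap\iota(\R^4))\), and the saddle disc for \(s_\infty\) at each point works for \(s\).
\end{itemize}
So saddleness transfers from \(s_\infty\) to \(s\).

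\emph{Failure of metric minimality.} Fix \(p\in\Omega\) and a disc \(\D\) centred at \(p\). Since \(s_\infty\) is not metric-minimising on \(\D\), there is a continuous \(s'\) agreeing with \(s_\infty\) on \(\partial\D\) such that \(\ell(s_\infty\circ\gamma)\ge\ell(s'\circ\gamma)\) for all Lipschitz curves \(\gamma\), with strict inequality for some \(\gamma\). Because \(\iota\) preserves lengths of curves, the map \(\iota\circ s'\) is a competitor for \(s\) with the same inequalities. Hence \(s\restrict{\D}\) is not metric-minimising. As \(\D\) was arbitrary, \(s\) is not locally metric-minimising at \(p\).

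The only point requiring care is that the definition of metric minimality quantifies over competitors in the target \(\R^d\), not just in \(\R^4\). This is harmless here, since we only need to exhibit one competitor and \(\iota\circ s'\) is one.

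An alternative avoiding the density theorem uses the twisted model directly. By \zcref{subsubsec:an_application_to_the_twisted_local_model} the twist form is nonzero near \(0\), so \zcref{thm:twisting_form_locally_metric_minimising} shows it is not locally metric-minimising on a small disc around the origin. It is also strictly saddle there by the computation in \zcref{subsubsec:a_twisted_local_model}. One would restrict to a smaller closed disc where it is an embedding and then compose with \(\iota\) as above.
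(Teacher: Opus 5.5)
Your argument is correct, but your main route is not the paper's. The paper's proof is your closing alternative. It takes the twisted model on a small disc around the origin, where it is saddle and, because \(\Theta = (24+O(r^2))\,dx\wedge dy\) does not vanish there, nowhere locally metric-minimising by \zcref{thm:twisting_form_locally_metric_minimising}. It then composes with \(\iota\).

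The lifting step is the same in both. The paper's trichotomy for \(\iota^{-1}(\Pi\cap\iota(\R^4))\) (empty, all of \(\R^4\), or an affine hyperplane) is exactly your three cases, and competitors are transported by \(\iota\) because it preserves lengths. You were right to verify saddleness in \(\R^d\) directly from the hyperplane definition rather than through the second fundamental form. For \(d>4\) the map \(\iota\circ s_\infty\) is no longer strictly saddle, since the normal directions \(e_5,\dots,e_d\) give \(II^\nu=0\).

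The two routes differ in where the \(\R^4\) example comes from:
\begin{itemize}
\item The paper's route is explicit. It uses only the inhomogeneous theory (a right inverse of \(\mathcal E\) and the twist form) and is independent of the infinitesimal isometric deformations and the inductive construction. The cost is that it relies on the computation of \(A\) and \(\Theta\) for the twisted model, which the paper only sketches.
\item Your route invokes \zcref{thm:density_theorem}. This is much heavier machinery (the existence of compactly supported isometric deformations, the rank lemma, openness of strict saddleness) and produces a non-explicit surface. In return it gives much more: the \(\R^4\) example can be taken strictly saddle and \(C^\infty\)-close to any prescribed strictly saddle disc, even the quadratic model, which is itself locally metric-minimising. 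This is precisely the ``in particular'' of the abstract.
\end{itemize}
There is no circularity, since the proof of the density theorem does not use this lemma.
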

\begin{proof}
    Let \(\tilde{s}:\overline{\Omega}\to\R^4\) be the twisted local model \eqref{eq:subsubsec:an_application_to_the_twisted_local_model:1}, where \(\overline{\Omega}\) is a small disc around the origin on which \(\tilde{s}\) is saddle but nowhere locally metric-minimising. Set
    \begin{equation*}
        \iota:\R^4 \hookrightarrow \R^d, \ \iota(x) = (x,0,\dots,0)
    \end{equation*}
    to be the canonical isometric embedding. 
    
    We define \(s := \iota \circ \tilde{s}\) and check that \(s\) is the desired map. Since \(\iota\) is a smooth embedding, \(s\) is a smooth embedding as well. It remains to verify the saddle property. Since \(s(\Omega) \subset \iota(\R^4)\), we obtain for every hyperplane \(\Pi \subset \R^d\)
    \begin{equation*}
        s^{-1}(\Pi) = s^{-1}(\Pi \cap \iota(\R^4)) = \tilde{s}^{-1}(\iota^{-1}(\Pi \cap \iota(\R^4))).
    \end{equation*}
    The set \(\iota^{-1}(\Pi \cap \iota(\R^4))\) is either empty, the whole space \(\R^4\), or an affine hyperplane in \(\R^4\). In the first two cases, the statement is clear, and in the third it follows from the saddle property of \(\tilde{s}\). Since \(\tilde{s}\) is nowhere locally metric-minimising and \(\iota\) is an isometric embedding, every local competitor for \(\tilde{s}\) yields, after composition with \(\iota\), a local competitor for \(s\). Thus \(s\) is nowhere locally metric-minimising as well.
\end{proof}

\begin{lemma}\label{appendix:lem:positive_definite_case}
    Let \(D \subset \Omega\) be a disc and let \(A \in \Gamma(\Sym^2 TD)\) be positive definite. Then every non-empty open subset \(U \subset D\) contains a non-zero \(1\)-form \(\omega \in \Omega_c^1(U)\) such that
    \begin{equation*}
        L_A \omega = 0.
    \end{equation*}
\end{lemma}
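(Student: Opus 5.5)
The idea is to use the identity \eqref{eq:lem:contraction_operator_surjective:1} from the proof of \zcref{lem:contraction_operator_surjective}. For a \(1\)-form \(\omega\), put \(X := A^\sharp(\omega)\). Then
\begin{equation*}
    L_A\omega = \divg X + \alpha_A(X),
\end{equation*}
where \(\alpha_A = -(A^\sharp)^{-1}\divg A\) is a well-defined smooth \(1\)-form, because \(A\) is positive definite and hence non-degenerate. Since \(A^\sharp\) is a bundle isomorphism that does not enlarge supports, it suffices to find a non-zero compactly supported vector field \(X\) in \(U\) with
\begin{equation*}
    \divg X + \alpha_A(X) = 0,
\end{equation*}
and then set \(\omega := (A^\sharp)^{-1}X\). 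Note that this argument uses non-degeneracy of \(A\) rather than its definiteness. The point is that the equation above is a single scalar condition on two unknown functions, so it is underdetermined and has many compactly supported solutions.

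The construction goes as follows.

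\emph{Step 1.} Shrink \(U\) to a small disc \(B \Subset U\). On the simply connected set \(B\), the condition depends only on \(\Theta = d\alpha_A\); we try to absorb \(\alpha_A\) by an integrating factor.

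\emph{Step 2.} Rewrite the condition in Hodge-star form, exactly as in \zcref{lem:contraction_operator_surjective}. Setting \(\beta := *X^\flat\), the equation becomes
\begin{equation*}
    d\beta + \alpha_A\wedge\beta = 0.
\end{equation*}
This is a twisted closedness condition on the \(1\)-form \(\beta\).

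\emph{Step 3.} Solve it by a twisted exactness ansatz, \(\beta := d\phi + \phi\,\alpha_A\) with \(\phi \in C_c^\infty(B)\). Then
\begin{equation*}
    d\beta + \alpha_A\wedge\beta = \phi\, d\alpha_A = \phi\,\Theta,
\end{equation*}
because \(\alpha_A\wedge d\phi + \alpha_A \wedge \phi\alpha_A\) cancels against \(d\phi\wedge\alpha_A\) (using \(\alpha_A \wedge \alpha_A = 0\)). So this ansatz only works where \(\Theta\) vanishes, and a different construction is needed in general.

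The general construction is a flat-connection argument. Write \(\alpha_A = \alpha_0 + \alpha_1\), where \(\alpha_0\) is exact on \(B\) and \(\alpha_1\) is chosen to kill the curvature locally. More simply, choose a rank-one solution. Fix a nowhere vanishing closed \(1\)-form \(\eta\) on \(B\) (for instance \(dx\)) and seek
\begin{equation*}
    \beta = \phi\,\eta, \qquad \phi \in C_c^\infty(B).
\end{equation*}
The equation \(d\beta + \alpha_A\wedge\beta = 0\) becomes
\begin{equation*}
    (d\phi + \phi\,\alpha_A)\wedge \eta = 0.
\end{equation*}
With \(\eta = dx\) this reads \(\partial_y\phi + \phi\,(\alpha_A)_2 = 0\), where \(\alpha_A = (\alpha_A)_1 dx + (\alpha_A)_2 dy\). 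This is a linear ODE in \(y\) for each fixed \(x\), and its solutions never change support in the \(y\)-direction. So the rank-one ansatz cannot produce a compactly supported solution, and more freedom is required.

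The working plan is therefore to use two parameters: take \(\beta = d\psi + \phi\,\alpha_A + \chi\,dx\). Alternatively, and more cleanly, take an arbitrary compactly supported \(\beta_0\) and correct it through the right inverse of \zcref{lem:contraction_operator_surjective} on a region where \(\Theta \neq 0\). The cases then split as follows.

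\emph{If \(\Theta\) vanishes identically on a disc in \(U\):} there \(\alpha_A = dh\) is exact. Take \(\beta = e^{-h}d\phi\) with \(0 \neq \phi \in C_c^\infty\) non-constant. Then \(d\beta + dh\wedge\beta = e^{-h}\,d(d\phi) = 0\), so \(\beta\) is a non-zero compactly supported solution.

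\emph{Otherwise:} \(\Theta \neq 0\) on some disc \(B \subset U\). Pick any \(0 \neq \beta_0 \in \Omega^1_c(B)\) and set \(f := *(d\beta_0 + \alpha_A\wedge\beta_0)\). By the proof of \zcref{lem:contraction_operator_surjective}, there is \(\beta_1\), depending differentially on \(f\) and supported in \(\supp f\), with \(d\beta_1 + \alpha_A\wedge\beta_1 = d\beta_0 + \alpha_A\wedge\beta_0\). Then \(\beta := \beta_0 - \beta_1\) solves the equation and is supported in \(B\).

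The main obstacle is ensuring \(\beta \neq 0\), i.e.\ choosing \(\beta_0\) outside the image of the first-order right inverse \(f \mapsto \beta_1\). I would try to settle this by a jet or order count: \(\beta_1\) is built from \(f\) and its first derivatives, and \(f\) already involves first derivatives of \(\beta_0\). Taking \(\beta_0 = d\phi\) with \(\phi\) oscillating at high frequency, the component of \(\beta_1\) grows like a second power of the frequency while \(\beta_0\) grows only like the first, so \(\beta_0 = \beta_1\) cannot hold. Alternatively, one can exploit the fact that the map \(\beta_0 \mapsto \beta_0 - \beta_1\) has a non-trivial kernel complement, since the operator sends an infinite-dimensional space to functions and the right inverse fails to be injective. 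Finally, set \(X := (*^{-1}\beta)^\sharp\) and \(\omega := (A^\sharp)^{-1}X\).
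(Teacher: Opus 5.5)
\paragraph{Verdict: gap at the non-triviality step.} Your reduction to \(d\beta+\alpha_A\wedge\beta=0\) matches the paper. So does your treatment of the case \(\Theta\equiv0\), and so does the correction \(\beta=\beta_0-\beta_1\) via the right inverse of \zcref{lem:contraction_operator_surjective}. The gap is the step you flag yourself: proving \(\beta\neq0\).

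\paragraph{Why your two arguments fail.} Since \(\beta_0-\beta_1=0\) exactly when \(\beta_0=d\psi+\psi\alpha_A\) for some \(\psi\in C_c^\infty(B)\), you must produce a \(\beta_0\) that is not twisted exact. Neither of your arguments does this as written.
\begin{itemize}
\item \emph{The frequency count.} Take \(\beta_0=d\phi\) with \(\phi=\chi\cos(k\ell)\) and \(d\ell=\xi\) constant. One finds \(\psi=-k\chi c\sin(k\ell)+O(1)\), where \(c:=(\alpha_A\wedge\xi)/\Theta\) is a ratio of \(2\)-forms. So the \(k^2\)-part of \(\beta_1\) is \(-k^2\chi c\cos(k\ell)\,\xi\). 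This vanishes identically whenever \(\xi\) is parallel to \(\alpha_A\) on \(\supp\chi\), and \(\Theta\neq0\) does not rule that out. For instance, with \(\alpha_A=y\,dx\) and \(\xi=dx\), \(\beta_1\) is only \(O(k)\), and the count decides nothing.
\item \emph{Repairing the count.} Choose \(\xi\) transverse to \(\alpha_A\) at a point where \(\alpha_A\neq0\); such points exist because \(d\alpha_A\neq0\) forbids \(\alpha_A\equiv0\) on an open set. Take \(\chi\) concentrated there, so that \(\chi c\not\equiv0\). Then \(\beta_0-\beta_1\neq0\) for large \(k\). This is a step you would have to supply.
\item \emph{Your alternative argument is wrong.} A right inverse is always injective, since \(TR=\mathrm{id}\). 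The heuristic ``two unknowns, one equation'' is not a proof about kernels of maps between infinite-dimensional spaces.
\end{itemize}

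\paragraph{How the paper avoids this.} The paper uses exactly the solution you discarded. Your rank-one ansatz gives \(\tilde\beta=p\,dx\) with \(\partial_y p+(\alpha_A)_2\,p=0\) and \(p(x,y_0)=1\). This is a nowhere-vanishing, though not compactly supported, local solution on a rectangle where \(\Theta\neq0\). Take \(\beta_0:=\chi\tilde\beta\) with \(\chi\equiv1\) on a disc \(V\). Then
\(d\beta_0+\alpha_A\wedge\beta_0=d\chi\wedge\tilde\beta\)
is supported in \(\supp d\chi\). Since the right inverse does not enlarge supports, \(\beta_1\) is also supported there. Hence \(\beta=\tilde\beta\neq0\) on \(V\). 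Non-triviality thus comes from supports alone, with no asymptotic analysis.
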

\begin{proof}
    As in the proof of \zcref{lem:contraction_operator_surjective}, for \(X = A^\sharp(\omega)\) and \(\eta := *X^\flat\), the equation \(L_A \omega = 0\) is equivalent to
    \begin{equation}\label{eq:appendix:lem:positive_definite_case:1}
        d\eta + \alpha_A \wedge \eta = 0.
    \end{equation}
    We distinguish two cases.

    \emph{Case 1:} \(d\alpha_A \equiv 0\) on \(U\). Choose a disc \(B \Subset U\). By the Poincar\'e Lemma \cite[Theorem 11.49]{lee2012introduction}, there exists \(u \in C^\infty(B)\) with \(\alpha_A = du\). For a non-constant \(\varphi \in C_c^\infty(B)\), set \(\eta = e^{-u}d\varphi\). Then \(\eta \in \Omega_c^1(B)\), \(\eta \not\equiv 0\), and
    \begin{equation*}
        d\eta + \alpha_A \wedge \eta = (de^{-u}d\varphi) + du\wedge e^{-u}d\varphi = 0.
    \end{equation*}

    \emph{Case 2:} Now suppose \(d\alpha_A \not\equiv 0\) on \(U\). After shrinking, choose a coordinate rectangle \(I \times J \Subset U\) on which \(d\alpha_A\) is nowhere vanishing. We construct a non-zero local solution of \(L_A \omega = 0\) on \(I \times J\). Write
    \begin{equation*}
        \alpha_A = a(x,y) dx + b(x,y)dy \quad \text{and} \quad \eta = p(x,y) dx + q(x,y) dy.
    \end{equation*}
    Then \zcref{eq:appendix:lem:positive_definite_case:1} reduces to
    \begin{equation*}
        \partial_x q - \partial_y p + aq - bp = 0.
    \end{equation*}
    We may set \(q = 0\) and the equation reduces to
    \begin{equation*}
        \partial_y p + bp = 0.
    \end{equation*}
    Now we fix \(y_0\) and require the initial solution \(p(x,y_0) = 1\). Then
    \begin{equation*}
        p(x,y) = \exp\left(- \int_{y_0}^y b(x,t) dt\right) > 0,
    \end{equation*}
    so \(\tilde{\eta} := pdx\) is a nowhere vanishing local section. Let \(\tilde{\omega}\) denote the corresponding nowhere vanishing one-form such that \(L_A \tilde{\omega} = 0\). Now we use this solution to construct a compactly supported one form. Choose discs \(V \Subset W \Subset I \times J\) and a cut-off function \(\chi \in C_c^\infty(W)\) such that \(\chi \equiv 1\) on \(V\). Set \(f := L_A(\chi\tilde{\omega})\). Let \(R\) be the first-order differential right inverse of \(L_A\) from \zcref{lem:contraction_operator_surjective}, and set
    \begin{equation*}
        \omega := \chi\tilde{\omega} - R(f).
    \end{equation*}
    Then
    \begin{equation*}
        L_A\omega = L_A(\chi\tilde{\omega}) - L_A(R(f)) = f - f = 0.
    \end{equation*}
    Since \(L_A\tilde{\omega} = 0\), we have \(\supp f \subset \supp d\chi\). Moreover, since \(R\) is a differential operator, it does not enlarge supports, and hence \(\supp R(f) \subset \supp f \subset \supp d\chi\). Since \(\chi \equiv 1\) on \(V\), we get \(d\chi = 0\) there, and therefore \(R(f) = 0\) on \(V\). Thus \(\omega = \tilde{\omega}\) on \(V\). Because \(\tilde{\omega}\) is nowhere vanishing, it follows that \(\omega \not \equiv 0\).
\end{proof}

\begin{remark}\label{appendix:rem:alternative_proof_positive_definite_case}
    Alternatively, the positive definite case may be obtained from the general results on compactly supported solutions of underdetermined elliptic equations in \cite{delay2012smooth}.   
\end{remark}

\begin{lemma}\label{appendix:lem:rank_one_case}
    Let \(D \subset \Omega\) be an open disc and \(A \in \Gamma(\Sym^2 TD)\) be positive semidefinite of constant rank one. Then there exists \(p \in D\) and a non-zero \(\omega \in \Omega_c^1(D)\) such that
    \begin{equation*}
        L_A \omega = 0.
    \end{equation*}
    Moreover, writing \(S := \sym(\nabla \omega)\), one may arrange that
    \begin{equation*}
        \omega_p = 0, \quad S(p) = 0 \quad \text{and} \quad \rank (\nabla S)_p = 2.
    \end{equation*}
\end{lemma}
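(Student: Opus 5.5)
The plan is to reduce $L_A\omega=0$ to a scalar transport-type equation by writing $A$ as a square. Since $A$ is positive semidefinite of constant rank one, after shrinking $D$ to a smaller disc I can write $A=\xi\otimes\xi$ for a smooth nowhere vanishing vector field $\xi$. The sign ambiguity of $\xi$ is harmless on a disc. Then
\[
L_A\omega=(\nabla\omega)(\xi,\xi)=\xi\bigl(\omega(\xi)\bigr)-\omega(\nabla_\xi\xi).
\]
Choose flow-box coordinates $(x,y)$ near a point with $\xi=\partial_x$ and write $\omega=\omega_1dx+\omega_2dy$. The equation becomes
\[
\partial_x\omega_1-\Gamma^1_{11}\omega_1-\Gamma^2_{11}\omega_2=0 .
\]
Here $\Gamma^2_{11}$ measures the geodesic curvature of the integral curves of $\xi$. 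I then split into two cases according to whether $\Gamma^2_{11}$ vanishes identically near some point or not. In either case I will work in a small coordinate disc, which suffices since a form compactly supported there is compactly supported in $D$.

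\emph{Case (i): $\Gamma^2_{11}\equiv0$ on an open set.} Here I take $\omega=f\,dy$ with $f$ compactly supported in that set. Then $\omega_1=0$, so the equation holds for every $f$. Fix $p$ in the set and let $f$ vanish to first order at $p$ with $\operatorname{Hess}f(p)$ non-degenerate, for instance $f=\chi\cdot\frac12((x-x_p)^2+(y-y_p)^2)$ with a cut-off $\chi\equiv1$ near $p$. Since $f(p)=0$ and $df_p=0$, we get $\omega_p=0$ and
\[
S=\sym(df\otimes dy+f\nabla dy)
\]
vanishes at $p$. Differentiating once at $p$, every term still containing $f$ or $df$ drops out, leaving
\[
(\nabla S)_p(X)=\sym\bigl(\operatorname{Hess}f_p(X)^\flat\otimes dy\bigr).
\]
The map $X\mapsto\sym(\beta_X\otimes dy)$ is injective whenever $X\mapsto\beta_X$ is injective, since $\sym(\beta\otimes dy)=0$ forces $\beta=0$. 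Hence $\rank(\nabla S)_p=2$.

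\emph{Case (ii): $\Gamma^2_{11}(p)\neq0$ at some $p$.} On a small disc around $p$ where $\Gamma^2_{11}\neq0$, I choose $\omega_1=\varphi$ freely with $\varphi$ compactly supported and define
\[
\omega_2:=\frac{\partial_x\varphi-\Gamma^1_{11}\varphi}{\Gamma^2_{11}} .
\]
This $\omega_2$ is smooth and compactly supported, and it solves the equation exactly. So no integrability condition arises, which is why the curvature of the $\xi$-lines is useful here. The conditions at $p$ are then arranged via the jet of $\varphi$. I take $\varphi$ equal near $p$ to a cubic polynomial in $(x-x_p,y-y_p)$ with vanishing $2$-jet at $p$. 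Then $\omega_p=0$, $\nabla\omega_p=0$ and hence $S(p)=0$. The first derivative $(\nabla S)_p$ depends linearly only on the $3$-jet of $\varphi$ at $p$, and only through the pure second derivatives of $\omega$ at $p$.

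\textbf{Main obstacle.} The step I expect to be hardest is showing that some cubic $\varphi$ yields $\rank(\nabla S)_p=2$. At $p$ one has $\partial^2\omega_1=\partial^2\varphi$ and $\partial^2\omega_2=\partial_x\partial^2\varphi/\Gamma^2_{11}(p)$. Since all derivatives of orders $\le2$ vanish at $p$, the lower-order and Christoffel terms drop out, so
\[
(\nabla S)_p(\partial_k)_{ij}=\tfrac12(\partial_k\partial_i\omega_j+\partial_k\partial_j\omega_i)(p).
\]
I would first try $\varphi=(y-y_p)^3$ near $p$. Then $\omega_2$ has zero $2$-jet, and $(\nabla S)_p(\partial_y)$ has a nonzero $dx\,dx$-entry $6(y-y_p)$-derivative while $(\nabla S)_p(\partial_x)$ has none, since $\partial_x\partial_x\varphi\equiv0$. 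If that gives rank one, I would add $(x-x_p)^3$ or $(x-x_p)^2(y-y_p)$ terms and check the resulting $2\times3$ coefficient matrix directly. It is a linear-algebra check, and since the available jets form a large family, a rank-two choice should exist.

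Finally, shrinking supports by cut-offs equal to $1$ near $p$ keeps both the equation and the jet conditions. In Case (i) this is because the equation holds for every $f$. In Case (ii) it is because $\omega_2$ is defined from the cut-off $\varphi$, so $\omega$ stays an exact solution.
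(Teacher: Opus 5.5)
Your approach matches the paper's. The paper writes $A=\lambda X\otimes X$ with an orthonormal frame $(X,Z)$ and $\nabla_XX=\mu Z$, reduces $L_A\omega=0$ to $Xf=\mu h$, and splits on whether $\mu\equiv 0$. Your $\Gamma^2_{11}$ plays the role of $\mu$, and your Case (i) is correct.

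The gap is the rank-two claim in Case (ii). You leave it as an unverified ``linear-algebra check'', and the reasoning you sketch there is wrong. Because $\varphi$ has vanishing $2$-jet at $p$, you get $(\nabla_k S)_{11}(p)=\partial_k\partial_x\varphi(p)=0$ for every admissible $\varphi$. Structurally, differentiating $S(\xi,\xi)=L_A\omega=0$ at a zero of $S$ gives $(\nabla S)_p(\xi,\xi)=0$. So the ``nonzero $dx\,dx$-entry'' you expect can never occur. For $\varphi=(y-y_p)^3$ you even have $\partial_x\varphi=0$, so $\omega_2$ also has vanishing $2$-jet and $(\nabla S)_p=0$. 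The choice $\varphi=(x-x_p)^3$ gives rank one only. Saying the available jets form ``a large family'' is therefore not an argument; you must identify which jets work.

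The fix is short and reuses your Case (i) computation. Put $c:=\Gamma^2_{11}(p)\neq 0$ and $q:=\partial_x\varphi$, a homogeneous quadratic in $(x-x_p,y-y_p)$. Then $\omega=\varphi\,dx+\omega_2\,dy=c^{-1}q\,dy+O(r^3)$, where $r$ is the coordinate distance to $p$. Exactly as in Case (i), this gives $(\nabla S)_p(X)=c^{-1}\sym\bigl(\operatorname{Hess}q_p(X,\cdot)\otimes dy\bigr)$, which has rank two if and only if $q$ is non-degenerate. Since $\partial_x$ maps homogeneous cubics onto all homogeneous quadratics, you may take $\varphi=(x-x_p)^2(y-y_p)$, giving $q=2(x-x_p)(y-y_p)$. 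Alternatively take $\varphi=\tfrac13(x-x_p)^3+\tfrac12(x-x_p)(y-y_p)^2$, giving $q=(x-x_p)^2+\tfrac12(y-y_p)^2$; this is exactly the paper's normalisation $\omega=\eta\,(x^2+y^2/2)+O(r^3)$, which it uses in both cases. With this choice your proof is complete and agrees with the paper's, except that you work in flow-box coordinates rather than an orthonormal frame.
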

\begin{proof}
    After shrinking to a sufficiently small disc \(U \subset D\), we may write \(A = \lambda X \otimes X\), where \(\lambda > 0\) and \(X\) is a smooth unit vector field. We complete \(X\) to an orthonormal frame \((X,Z)\) with dual coframe \((\theta, \eta)\). Since \(X\) has unit length, there exists a smooth function \(\mu\) such that \(\nabla_X X = \mu Z\). By writing \(\omega = f\theta + h\eta\), we obtain
    \begin{equation*}
        \begin{split}
            L_A \omega &= \lambda \sym(\nabla \omega)(X,X) = \lambda(\nabla_X \omega)(X) = \lambda X(\omega(X)) - \lambda\omega(\nabla_XX) = \lambda Xf - \lambda \omega(\mu Z)\\
            &= \lambda Xf - \lambda \mu h
        \end{split}
    \end{equation*}
    Thus, it suffices to construct compactly supported smooth functions \(f\), \(h\) with \(Xf = \mu h\). We shall arrange that, at some point \(p \in U\),
    \begin{equation}\label{eq:lem:appendix:lem:rank_one_case:1}
        \omega = \eta\left(x^2 + \frac{y^2}{2}\right) + O(r^3),
    \end{equation}
    where \((x,y)\) are normal coordinates centred at \(p\) such that \(X_p = \partial_x \restrict{p}\) and \(Z_p = \partial_y\restrict{p}\).
    
    \emph{Case 1:} \(\mu \equiv 0\) on \(U\). Choose \(p \in U\), normal coordinates as above, and a cutoff function \(\chi \in C_c^\infty(U)\) equal to \(1\) near \(p\). Set
    \begin{equation*}
        f = 0 \quad \text{and} \quad h = \chi\left(x^2 + \frac{y^2}{2}\right).
    \end{equation*}
    Then \(Xf = \mu h = 0\), and \zcref{eq:lem:appendix:lem:rank_one_case:1} holds.

    \emph{Case 2:} \(\mu\) is not identically zero in \(U\). Choose \(p \in U\) with \(\mu(p) \neq 0\), and a disc \(V \subset U\) containing \(p\) on which \(\mu\) is nowhere zero. Let \((x,y)\) be normal coordinates as above, and choose a function \(f \in C_c^\infty(V)\) which
    \begin{equation*}
        f = O(r^3) \quad \text{and} \quad \partial_x f = \mu(p)\left(x^2 + \frac{y^2}{2}\right) + O(r^3).
    \end{equation*}
    (Integrate the quadratic polynomial w. r. t. \(x\) and multiply the corresponding cubic polynomial by a cutoff function.) Now define \(h := \frac{Xf}{\mu}\). Then \(f,h \in C_c^\infty(V)\), and \(Xf - \mu h = 0\). Moreover, we have \(X = \partial_x + O(r)\), whence \(Xf = \partial_x f + O(r^3)\). Together with \(\mu = \mu(p) + O(r)\), we therefore obtain
    \begin{equation*}
        h = \frac{Xf}{\mu} = \left(x^2 + \frac{y^2}{2}\right) + O(r^3).
    \end{equation*}
    Consequently, \zcref{eq:lem:appendix:lem:rank_one_case:1} follows as required.

    In either case, extending \(\omega\) by zero outside its compact support yields a smooth, compactly supported \(1\)-form on \(D\) satisfying \(L_A\omega = 0\). It remains to verify the additional conditions on \(S = \sym(\nabla \omega)\). By construction, \(f\), \(h\) and their first derivatives vanish at \(p\), hence \(\omega_p = 0\) and \((\nabla \omega)_p = 0\), and, in particular \(S(p)\) = 0. The first derivatives of \(S\) at \(p\) are determined by the second-order Taylor polynomial of \(\omega\), so we find using \zcref{eq:lem:appendix:lem:rank_one_case:1}
    \begin{equation*}
        (\nabla_X S)_p = \theta_p \otimes \eta_p + \eta_p \otimes \theta_p \quad \text{and} \quad (\nabla_Z S)_p = \eta_p \otimes \eta_p.
    \end{equation*}
    These two symmetric tensors are linearly independent, hence \(\rank(\nabla S)_p = 2\) holds as well.
\end{proof}

\begin{proof}[Proof of \zcref{lem:compactly_supported_kernel}]
   Fix an arbitrarily small disc \(B \subset D\). By \zcref{appendix:lem:semidefinite_reduction} we may assume that \(A\) is positive semidefinite on \(B\). We distinguish two cases.

   \emph{Case 1:} \(A\) is positive definite at some point of \(B\). After shrinking \(B\) around this point, we may assume that \(A\) is positive definite on \(B\). By \zcref{appendix:lem:positive_definite_case} there exists a non-zero \(1\)-form \(\omega \in \Omega_c^1(B)\) satisfying \(L_A \omega = 0\).

   \emph{Case 2:} \(A\) is nowhere positive definite on \(B\). Since \(A\) is positive semidefinite and nowhere vanishing, it follows that \(\rank A^\sharp = 1\) on \(B\). Hence \zcref{appendix:lem:rank_one_case} yields a non-zero \(1\)-form \(\omega \in \Omega_c^1(B)\) satisfying \(L_A\omega = 0\).

   Extending \(\omega\) by zero from \(B\) to \(D\) proves the claim. Since \(B \subset D\) was chosen arbitrarily small, the support may be chosen in an arbitrarily small disc.
\end{proof}

\begin{lemma}\label{appendix:lem:rank_0-1_dense}
    Let \(s : \Omega \to \R^d\), \(d \geq 4\), be a smooth strictly saddle immersion, and let \(Y \in \Gamma(s^*(T\R^d))\) be an infinitesimal isometric deformation of \(s\). Then the bundle map
    \begin{equation*}
        DY : T\Omega \to s^*(T\R^d)
    \end{equation*}
    cannot have constant rank \(1\) on a non-empty open set.
\end{lemma}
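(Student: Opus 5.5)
Suppose, to the contrary, that \(\rank (DY)_p = 1\) for every \(p\) in some non-empty open set \(W \subset \Omega\). We show that \(II^e\) is degenerate for a suitable non-zero normal vector field \(e\), which is incompatible with strict saddleness. The proof only uses \(\mathcal{E}(Y)=0\) and the symmetry of second derivatives of \(Y\).

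\emph{Factorisation of \(DY\).} After shrinking \(W\) to a disc, write
\begin{equation*}
    DY = e \otimes \beta,
\end{equation*}
where \(e : W \to \R^d\) is a smooth unit vector field spanning the image of \(DY\), and \(\beta\) is a smooth nowhere vanishing \(1\)-form on \(W\). Locally \(e\) is determined up to sign by the rank-one image, so it can be chosen smooth.

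\emph{\(e\) is normal.} Define the \(1\)-form \(\tau(X) := \langle Ds(X), e\rangle\). The condition \(\mathcal{E}(Y)=0\) reads
\begin{equation*}
    \tau \otimes \beta + \beta \otimes \tau = 0.
\end{equation*}
The symmetric product of two non-zero covectors on a two-dimensional space is non-zero. Since \(\beta \neq 0\), this forces \(\tau = 0\) on \(W\). Hence \(e\) is a unit normal section along \(s\restrict{W}\).

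\emph{\(e\) is constant in the kernel direction of \(\beta\).} Because \(Y\) is smooth, its second derivatives are symmetric. For vector fields \(X,Z\) this gives
\begin{equation*}
    D_X(DY(Z)) - D_Z(DY(X)) - DY([X,Z]) = 0.
\end{equation*}
Substituting \(DY = e\otimes\beta\) yields
\begin{equation*}
    \beta(Z)\, D_X e - \beta(X)\, D_Z e + d\beta(X,Z)\, e = 0.
\end{equation*}
Now choose a nowhere vanishing \(Z\) with \(\beta(Z)=0\), and \(X\) with \(\beta(X)=1\). The identity becomes \(D_Z e = d\beta(X,Z)\, e\), so \(D_Z e\) is parallel to \(e\). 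Since \(|e| = 1\), we also have \(\langle D_Z e, e\rangle = 0\). Therefore \(D_Z e = 0\).

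\emph{Contradiction with strict saddleness.} By the Weingarten relation \(Ds(W_e Z) = -(D_Z e)^\top\), we get \(W_e Z = 0\). Hence \(II^e(Z,\cdot) = 0\) at every point of \(W\), so \(II^e\) has rank at most one there. A form of rank at most one is semidefinite, so it is not indefinite. Since \(e\) is a non-zero normal vector, this contradicts the strict saddle property at every point of \(W\).

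\emph{Main obstacle.} The step needing the most care is the factorisation of \(DY\) and the smooth choice of \(e\) and \(Z\). This is a local constant-rank statement, handled by shrinking \(W\) to a disc where the rank-one line bundle \(\Ima DY\) and the kernel line field of \(\beta\) admit smooth nowhere vanishing sections. The remaining steps are short algebraic computations.
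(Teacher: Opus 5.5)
Your proof is correct and follows the same route as the paper: \(\mathcal{E}(Y)=0\) forces the image line of \(DY\) to be normal, this line is constant along the kernel direction of \(DY\), and the Weingarten relation then makes the corresponding scalar second fundamental form degenerate, contradicting strict saddleness. The only difference is in the middle step. The paper gets it from the Rank Theorem, via coordinates with \(Y(x,y)=\gamma(x)\), so that \(\nu=Y_x\) satisfies \(\nu_y=0\). You instead derive \(D_Z e=0\) directly from the symmetry of second derivatives of \(Y\) together with \(|e|=1\).
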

\begin{proof}
    Suppose, to the contrary, that \(DY\) has constant rank \(1\) on a non-empty open set. Using the canonical trivialisation of \(T\R^d\), the Rank Theorem \cite[Theorem 4.12]{lee2012introduction} yields, after restricting to a smaller open set, local coordinates \((x,y)\) and a regular curve
    \begin{equation*}
        \gamma : I \to \R^d \quad \text{such that} \quad Y(x,y) = \gamma(x).
    \end{equation*}
    Hence, we find
    \begin{equation*}
        Y_y = 0 \quad \text{and} \quad \nu := Y_x = \gamma'(x) \neq 0.
    \end{equation*}
    Since \(\mathcal{E}(Y) = 0\), we have
    \begin{equation*}
        0 = \mathcal{E}(Y)(\partial_x, \partial_x) = \langle s_x, \nu\rangle \quad \text{and} \quad 0 = \mathcal{E}(Y)(\partial_x,\partial_y) = \frac{1}{2}\langle s_y, \nu\rangle
    \end{equation*}
    Thus \(\nu\) is a nowhere vanishing normal section. Since \(\nu = \gamma'(x)\), we have \(\nu_y = 0\). Hence, the Weingarten equation gives
    \begin{equation*}
        Ds(W_\nu\partial_y) = -(\nu_y)^\top = 0.
    \end{equation*}
    Since \(s\) is an immersion, this implies \(W_\nu \partial_y = 0\), therefore
    \begin{equation*}
        II^\nu(\partial_y, Z) = g(W_\nu \partial_y, Z) = 0 \quad \text{for every} \ Z \in \Gamma(T\Omega).
    \end{equation*}
    Thus \(II^\nu\) is degenerate, contradicting the strict saddle condition.
\end{proof}

\begin{proof}[Proof of \zcref{lem:rank_0-1_dense}]
    The set of points at which \(DY\) has constant rank in a neighbourhood is open and dense in \(\Omega\). Indeed, every non-empty open subset contains a point at which the rank is maximal on that subset. A corresponding non-zero minor remains non-zero in a smaller neighbourhood, on which the rank is therefore constant. By \zcref{appendix:lem:rank_0-1_dense}, this rank cannot be \(1\). Since \(\dim T_p\Omega = 2\)  for every \(p \in \Omega\), the only remaining possibilities are \(0\) and \(2\), which proves the first assertion.

    Now suppose that \(\Omega\) is connected and \(\{p \in \Omega : \rank (DY)_p = 2\} = \emptyset\). By the first assertion, \(DY = 0\) on a dense subset of \(\Omega\), so, by continuity, \(DY = 0\) on \(\Omega\). Thus \(Y : \Omega \to \R^d\) is constant. Since \(Y\) is compactly supported, it follows that \(Y = 0\), a contradiction.
\end{proof}

\begin{proof}[Proof of \zcref{lem:existence_rank_2_infinitesimal_isometric_deformation}]
    Choose a disc \(\D \subset U\). Since \(\rank \mathcal{B} = 2\) on \(\D\), \(\mathcal{A}\restrict{\D} \subset \Sym^2(T\Omega)\restrict{\D}\) is a line bundle. After shrinking \(\D\), we choose a nowhere vanishing section \(A \in \Gamma(\mathcal{A}\restrict{\D})\). As in the proof of \zcref{prop:existence_infinitesimal_deformation}, the saddle property implies that \(A\) is nowhere indefinite.

    Suppose first that \(A\) is definite at some point. After shrinking \(\D\) around this point, we may assume that \(A\) is definite on \(\D\). For every \(p \in \D\), we have
    \begin{equation*}
        \Ima(\mathcal{B})_p = \{H \in \Sym^2(T_p^*\Omega) : \langle A_p, H\rangle_c = 0\}.
    \end{equation*}
    Since \(A_p\) is definite, every non-zero element of this space is indefinite. Moreover, \(\mathcal{B}\) is injective, since both \((N_s \Omega)_p\) and \(\Ima(\mathcal{B})_p\) have dimension \(2\). Hence \(II^\nu_p\) is indefinite for every non-zero normal vector \(\nu\), so \(s\) is strictly saddle on \(\D\). By \zcref{prop:existence_infinitesimal_deformation}, there exists a non-trivial compactly supported infinitesimal isometric deformation \(Y\) supported in \(\D\). Since \(\D\) is connected, \zcref{lem:rank_0-1_dense} yields \(\rank (DY)_p = 2\) for some \(p \in \D\).

    We may therefore assume that \(A\) is nowhere definite. It follows that \(A\) has rank one on \(\D\) since \(A\) is nowhere vanishing and nowhere indefinite. By replacing \(A\) by \(-A\), if necessary, we may assume that \(A\) is positive semidefinite. By \zcref{appendix:lem:rank_one_case}, there exists \(p \in D\) and a non-zero \(\omega \in \Omega_c^1(D)\) such that, for \(S := \sym(\nabla \omega)\), we have
    \begin{equation*}
        L_A \omega = 0, \quad \omega_p = 0, \quad S_p = 0 \quad \text{and} \quad \rank(\nabla S)_p = 2.
    \end{equation*}
    Since \(A\) generates \(\mathcal{A}\), the equation \(L_A\omega = 0\) yields \(S \in \Ima \mathcal{B}\restrict{\D}\). Because \(\rank \mathcal B\restrict{\D} = 2\) and the normal bundle of \(s : \Omega \to \R^4\) has rank \(2\), 
    \begin{equation*}
        \mathcal{B}:N_s \Omega \restrict{\D} \to \Ima \mathcal{B}\restrict{\D}
    \end{equation*}
    is a vector bundle isomorphism. Hence, there exists a unique smooth normal vector field \(\nu\) satisfying \(\mathcal{B}(\nu) = S\). Define
    \begin{equation*}
        V := \omega^\sharp \quad \text{and} \quad Y := Ds(V) + \nu.
    \end{equation*}
    Then \(\mathcal{E}(Y) = 0\), that is, \(Y\) is an infinitesimal isometric deformation. It remains to prove the rank assertion. At the point \(p \in D\) chosen above, since \(S_p = 0\) and \(\mathcal{B}_p\) is injective, we find \(\nu_p = 0\). By differentiating \(\mathcal{B}(\nu)\) at \(p\), we obtain
    \begin{equation*}
        (\nabla_X S)_p = (\nabla_X \mathcal{B})_p(\nu_p)  + \mathcal{B}_p(\nabla_X^\perp \nu)_p = \mathcal{B}_p(\nabla_X^\perp \nu)_p \quad \text{for} \ X \in T_p\Omega,
    \end{equation*}
    since the term involving \(\nu_p\) vanishes. As \(\mathcal{B}_p\) is an isomorphism onto its image and \(\rank(\nabla S)_p = 2\), it follows that
    \begin{equation*}
        \rank (\nabla^\perp \nu)_p = 2.
    \end{equation*}
    Finally, \(\omega_p = 0\) implies \(V_p = 0\). Hence, the normal component of \(DY_p(X)\) is precisely
    \begin{equation*}
        (DY_p(X))^\perp = II_p(X,V_p) + (\nabla_X^\perp\nu)_p = (\nabla_X^\perp\nu)_p,
    \end{equation*}
    where we have used the Gauss formula \cite[Theorem 8.2]{lee2018introduction}. Thus the normal projection of \(DY_p\) has rank \(2\). Since \(T_p D\) is two-dimensional, we conclude that
    \begin{equation*}
        \rank(DY)_p = 2.
    \end{equation*}
\end{proof}

\section[Auxiliary Proofs for the Density Theorem]{Auxiliary Proofs for \zcref{sec:the_density_theorem}}\label{appendix:sec:auxiliary_proof_for_the_density_theorem}

\begin{proof}[Proof of \zcref{lem:strictly_saddle_is_an_open_condition}]
    The smooth embedding part follows from \cite[Chapter 2, Theorem 1.4]{hirsch1976differential}. For the strictly saddle part, we compute determinants of scalar second fundamental form with respect to the coordinate basis \((\partial_x,\partial_y)\) of \(T\Omega\). The unit normal bundle of \(s_0\) over \(\overline{\Omega}\) is compact, and the function
    \begin{equation*}
        (p,\nu) \mapsto \det II_{s_0}^\nu(p)
    \end{equation*}
    is continuous and strictly negative on this bundle. Hence there exists \(c > 0\) such that
    \begin{equation*}
        \det II_{s_0}^\nu(p) \leq -c \quad \text{for every} \ p \in \overline{\Omega}, \nu \in (S(N_{s_0}\Omega))_p.
    \end{equation*}
    We claim that, if \(s\) is sufficiently close in \(C^1\)-topology, their normal vector fields are close as well. We consider the orthogonal projection \(P_s^N(p) : \R^4 \to (N_s\Omega)_p\) and
    \begin{equation*}
        \Phi:S(N_{s_0}\Omega) \to S(N_{s}\Omega), \ \Phi_s(p, \nu) = \frac{P_s^N(p)\nu}{|P_s^N(p)\nu|}
    \end{equation*}
    is an isomorphism for every \(p \in \overline{\Omega}\). Under this identification, the function \((p, \nu) \mapsto \det II_s^{\Phi_s(p,\nu)}(p)\) depends continuously on \(s\) in the \(C^2\)-topology. Therefore, after decreasing \(\varepsilon_*\) if necessary, \(\|s-s_0\|_{C^2(\overline{\Omega})} < \varepsilon_*\) implies
    \begin{equation*}
        \det II_s^{\Phi_s(p,\nu)}(p) < -\frac{c}{2} < 0 \quad \text{for every} \ (p,\nu) \in S(N_{s_0}\Omega).
    \end{equation*}
    Since \(\Phi_s\) parametrises all unit normal vectors of \(s\), every non-zero scalar second fundamental form of \(s\) is indefinite. So \(s\) is strictly saddle.
    
\end{proof}

\printbibliography

\bigskip

\begin{small}
	\noindent
	Stefan Christian Kohlmeier\\
	Institute of Mathematics, Paderborn University\\
	\textit{E-mail:} {\hypersetup{allcolors=black}\href{mailto:stekoh@mail.uni-paderborn.de}{\texttt{stekoh@mail.uni-paderborn.de}}}
\end{small}

\end{document}